\makeatletter \@addtoreset{equation}{section} \makeatother
\makeatletter \@addtoreset{enunciato}{section} \makeatother
\newcounter{enunciato}[section]
\newtheorem{ittheorem}{Theorem}
\newtheorem{itlemma}{Lemma}
\newtheorem{itproposition}{Proposition}
\newtheorem{itdefinition}{Definition}
\newtheorem{itremark}{Remark}
\newtheorem{itclaim}{Claim}
\newtheorem{itfact}{Fact}
\newtheorem{itconjecture}{Conjecture}
\newtheorem{itcorollary}{Corollary}
\newenvironment{theorem}{\addtocounter{enunciato}{1}
\begin{ittheorem}}{\end{ittheorem}}
\newenvironment{lemma}{\addtocounter{enunciato}{1}
\begin{itlemma}}{\end{itlemma}}
\newenvironment{proposition}{\addtocounter{enunciato}{1}
\begin{itproposition}}{\end{itproposition}}
\newenvironment{definition}{\addtocounter{enunciato}{1}
\begin{itdefinition}}{\end{itdefinition}}
\newenvironment{remark}{\addtocounter{enunciato}{1}
\begin{itremark}}{\end{itremark}}
\newenvironment{conjecture}{\addtocounter{enunciato}{1}
\begin{itconjecture}}{\end{itconjecture}}
\newenvironment{corollary}{\addtocounter{enunciato}{1}
\begin{itcorollary}}{\end{itcorollary}}
\newcommand{\tod}{\rightarrow^{(d)}}
\newcommand{\todp}{\rightarrow^{L^p}}
\newcommand{\todtwo}{\rightarrow^{L^2}}
\newcommand{\todd}[1]{\xrightarrow[#1 \to \infty]{(d)}}
\newcommand{\be}[1]{\begin{equation}\label{#1}}
\newcommand{\ee}{\end{equation}}
\newcommand{\bl}[1]{\begin{lemma}\label{#1}}
\newcommand{\el}{\end{lemma}}
\newcommand{\br}[1]{\begin{remark}\label{#1}}
\newcommand{\er}{\end{remark}}
\newcommand{\bt}[1]{\begin{theorem}\label{#1}}
\newcommand{\et}{\end{theorem}}
\newcommand{\bd}[1]{\begin{definition}\label{#1}}
\newcommand{\ed}{\end{definition}}
\newcommand{\bp}[1]{\begin{proposition}\label{#1}}
\newcommand{\ep}{\end{proposition}}
\newcommand{\bc}[1]{\begin{corollary}\label{#1}}
\newcommand{\bcj}[1]{\begin{conjecture}\label{#1}}
\newcommand{\ecj}{\end{conjecture}}
\def\<{\langle}
\def\>{\rangle}
\def\chv#1{\{\,#1\,\}}
\def\abs#1{\left\vert #1 \right\vert} %% absolute value %%
\def\norm#1{\left\Vert #1 \right\Vert} %% norm %%
\def\prt#1{\left( #1 \right)} %% round bracket %%
\def\crt#1{\left[ #1 \right]} %% square bracket %%
\def\Ind#1{\mathbbm{1}_{#1}}  %% indicator function %%
\def\eatspace#1{\relax}
\def\unskipit{\expandafter\eatspace}
\newcommand{\gep}{\varepsilon}
\newcommand{\mc}[1]{{\mathcal #1}}
\newcommand{\bb}[1]{{\mathbb #1}}
\newcommand{\R}{\ensuremath{\mathbb{R}}}
\newcommand{\var}{\ensuremath{\mathbb{V}\mathrm{ar}}}
\newcommand{\Z}{\ensuremath{\mathbb{Z}}}
\newcommand{\N}{\ensuremath{\mathbb{N}}}
\newcommand{\dd}{\ensuremath{\mathrm{d}}}
\def\crt#1{\left[ #1\right]} %square brackets
\def\prt#1{\left( #1\right)} %round brackets
\def\chv#1{\{\,#1\,\}} %Curly brackets curly braces
\def\Ind#1{ \mathbbm{1}_{#1}} %Indicator function
\def\abs#1{\left\vert #1\right\vert}
\def\norm#1{\left\Vert #1\right\Vert}
\newcommand{\Proof}[1]{\paragraph{\sc #1}}
\newcommand{\QED}{\hspace*{\fill}{\large$\Box$}\smallskip}
\begin{document}   

%%%%%%%%%% Title, authors %%%%%%%%%%%%%%%
\title{Random walk in cooling random environment:\\
recurrence versus transience and mixed fluctuations}

\author{\renewcommand{\thefootnote}{\arabic{footnote}}
Luca Avena,\,\,
Yuki Chino,\,\,
Conrado da Costa,\,\,
Frank den Hollander\,\footnotemark[1]}

\date{\today}

\footnotetext[1]{Mathematical Institute, Leiden University,
P.O.\ Box 9512, 2300 RA Leiden, The Netherlands}

%%%%%%%%%%%%%%%%%%%%%%%%%%%%%%%%%%%%%%%%

\maketitle

%%%%%%%%%% Abstract %%%%%%%%%%%%%%%%%%%%

\begin{abstract}
  \,  This is the third in a series of papers in which we consider one-dimensional Random Walk in Cooling Random Environment (RWCRE). The latter is obtained by starting from one-dimensional Random Walk in Random Environment (RWRE) and resampling the environment along a sequence of deterministic times, called \emph{refreshing times}.
In the present paper we explore two questions for general refreshing times. First, we investigate how the recurrence versus transience criterion known for RWRE changes for RWCRE. Second, we explore the fluctuations for RWCRE when RWRE is either recurrent or satisfies a classical central limit theorem. We show that the answer depends in a delicate way on the choice of the refreshing times.
An overarching  goal of our paper is to investigate how the behaviour of a random process with a rich correlation structure can be affected by resettings. 

\medskip\noindent {\it MSC 2010:}
60F05, %Central limit and other weak theorems
60G50, %Sums of independent random variables; random walks
60K37. %Processes in random environments
\\
{\it Keywords:}
Random walk,
dynamic random environment,
refreshing times,
cooling regimes,
recurrence versus transience,
mixed fluctuations.
\\
{\it Acknowledgment:} The research in this paper was supported through NWO Gravitation Grant NETWORKS-024.002.003. We are grateful to Zhan Shi for suggesting the line of proof of a fluctuation bound for recurrent RWRE that is needed in our analysis of RWCRE. 
\end{abstract}

\newpage

\tableofcontents

%%%%%%%%%% SECTION 1 %%%%%%%%%%%%%%%%%%%%%%%%%%%%%%%%%%%%%%%
\section{Introduction, main results and discussion} \label{Intro} 

%%%%%%%%%%%%%%%%%%%%%%%%%%%%%%%%%%%%%%%%%%%%%%%%%%%%%%%%
\subsection{Background and outline} \label{Back}

Random Walk in Random Environment (RWRE) is a classical model for a particle moving in a non-homogeneous medium, consisting of a random walk with random transition probabilities, sampled at time zero from a given law. \emph{Random Walk in Cooling Random Environment} (RWCRE) is a dynamic version of RWRE in which the environment is \emph{fully resampled} along a sequence of deterministic times, called \emph{refreshing times}.
% If the increments between consecutive refreshing times stay bounded, then correlations over time are weak and we expect to see a behaviour that is close to that of a homogeneous random walk. Conversely, if these increments diverge, then we expect to see a behaviour that is closer to that of RWRE. In particular, the faster the divergence, the more the behaviour should be like RWRE. RWCRE allows for different scenarios depending on the incremental structure of the refreshing times. Since RWRE may exhibit \emph{anomalous behaviour} due to the occurrence of \emph{trapping} (i.e., the random walk spends a long time in local niches of the environment), the same is true for RWCRE. We will refer to the resampling as \emph{cooling} because the medium in RWRE can be thought of as \emph{frozen}. We will see that cooling gives rise to interesting \emph{new phenomena}.

In order to understand RWCRE, we need certain \emph{concentration properties} of RWRE. Some of these are available from the literature, but others are not.  %and need to be developed along the way. 
A few preliminary results were obtained in Avena and den Hollander~\cite{AdH17} under the \emph{annealed} measure and subject to certain regularity conditions on the refreshing times.
% In particular, a weak law of large numbers and a centred central limit theorem were derived when the RWRE is recurrent. In the regime where the increments of the refreshing times diverge we found that, interestingly, while the limiting speed is the \emph{same} as for RWRE, the fluctuations are \emph{different} from those for RWRE, both in scale and in law. These results were further developed in Avena, Chino, da Costa and den Hollander~\cite{ACdCdH18} under the \emph{quenched} measure when the increments of the refreshing times diverge. Namely, a strong law of large numbers and a large deviation principle were derived. The rate function turns out to be the \emph{same} as for RWRE, and it was argued that this is not in contradiction with the fact that the fluctuations may be different, because the rate function is \emph{non-analytic} in a neighbourhood of the limiting speed. 

In the present paper we find conditions for recurrence versus transience and we identify fluctuations for \emph{general} cooling schemes with non-standard limit laws. In Section~\ref{RWRE} we define one-dimensional RWRE and recall some basic facts that are needed throughout the paper. In Section~\ref{model} we define RWCRE. Both these sections are largely copied from~\cite{ACdCdH18}, but are needed to set the stage and fix the notation. In Section~\ref{results} we state our main theorems. In Section~\ref{discussion} we place these theorems in their proper context and state a number of open problems. Proofs are provided in Sections~\ref{s:rxt}--\ref{s:Gauss}. Along the way we need a few refined properties of RWRE  that are of independent interest. These properties are stated in Section~\ref{RWREres}  and are proved in Appendices~\ref{appA}--\ref{appC}.

 %%%%%%%%%%%%%%%%%%%%%%%%%%%%%%%%%%%%%%%%%%%%%%%%%%%%%%%%%%%%
\subsection{RWRE: Basic facts} \label{RWRE}

Throughout the paper we use the notation $\N_0 = \N \cup \{0\}$ with $\N = \{1,2,\dots\}$. %, and the convention that $\tfrac00=1$. 
The classical one-dimensional static model is defined as follows. Let $\omega=\{\omega(x) \colon\,x\in\Z\}$ be an i.i.d.\ sequence with probability distribution
\begin{equation} \label{alpha}
\mu := \alpha^{\Z}
\end{equation} 
for some probability distribution $\alpha$ on $(0,1)$.  We assume that $\alpha$ is \emph{non-degenerate} and  write $\langle\cdot\rangle$ the corresponding expectation. We also assume that $\alpha$ is \emph{uniformly elliptic}, i.e., 
\begin{equation} \label{uellcond}
\exists\,\,\mathfrak{c}>0\colon \qquad \alpha(\mathfrak{c} \leq \omega(0) \leq 1-\mathfrak{c})=1.
\end{equation} 
%In Section~\ref{discussion} we will comment on how to relax this .  

%%%%%%%%%%%%%%%%%%%%%%%%%%%%%%%%%
\begin{definition}[{\bf RWRE}] \label{RWREdef} 
 Let  $\omega$ be an environment sampled from $\mu$. We call \emph{Random Walk in Random Environment} the Markov chain $Z = (Z_n)_{n\in\N_0}$ with state space $\Z$ and transition probabilities 
\begin{equation} \label{Ker_om}
P^{\omega}(Z_{n+1} = x + e \mid Z_n = x) = \left\{
\begin{array}{ll}
\omega(x), &\mbox{ if } e = 1,\\  
1 - \omega(x), &\mbox{ if } e = - 1,
\end{array}
\right. 
%\qquad 
\end{equation}
for $x \in \Z,\,n \in \N_0.$  We denote by $P_x^{\omega}(\cdot)$ the \emph{quenched} measure of $Z$ starting from $Z_0=x\in\Z$, and by 
%%
%\begin{equation} 
$P_{x}^{\mu}(\cdot) := \int_{(0,1)^{\Z}} P_x^{\omega}(\cdot)\,\mu(\dd\omega),$
%\end{equation}
%%
the \emph{annealed} measure. The corresponding expectations are denoted by $E_x^{\omega}$ and $E_{x}^{\mu}$.
\end{definition}
%%%%%%%%%%%%%%%%%%%%%%%%%%%%%%%%%%%%%%%%

\noindent
The understanding of one-dimensional RWRE is well developed, both under the quenched and the annealed measure. For a general overview, we refer the reader to the lecture notes by Zeitouni~\cite{ZZ04}.  Below we collect some basic facts and definitions. 

%The asymptotic properties of RWRE are controlled by the distribution of the \emph{ratio} of the transition probabilities to the left and to the right. 
%Define
%%
%\begin{equation} \label{rhodef}
%\rho := \frac{1 - \omega(0)}{\omega(0)}.
%\end{equation}
%%
The average displacement is $E_0^\mu[Z_1] = \langle \tfrac{1-\rho}{1+\rho}\rangle$, where $\rho := \tfrac{1 - \omega(0)}{\omega(0)}$. The following proposition due to Solomon~\cite{Sol75} characterises recurrence versus transience and limiting speed. Without loss of generality we may assume that
\begin{equation} \label{+0R}
\langle \log \rho \rangle \leq 0,
\end{equation}
because the reverse can be included via a reflection argument. Indeed, if $\widetilde{\omega}$ is defined by
%\begin{equation}\label{tildeomega}
$\widetilde{\omega}(x) = 1-\omega(-x)$,  for $x \in \Z$,
%\end{equation}
then $P_0^\omega(-Z \in \cdot\,) = P_0^{\widetilde{\omega}}(Z \in \cdot\,)$.
  
%%%%%%%%%%%%%%%%%%%%%%%%%%%%%%%%%%%%%%%%
\begin{proposition}[{\bf Recurrence, transience, speed of RWRE}~\cite{Sol75}]
\text{}\\
 \label{prop:LLN}
Suppose that~\eqref{+0R} holds. Then:
\begin{itemize} 
\item $Z$ is recurrent when $\langle \log \rho \rangle = 0$.
\item $Z$ is transient to the right when $\langle \log \rho \rangle <0$, and for $\mu \text{-a.e.} \,\omega$, %$Z$ has a speed $P_0^\omega $\text{-a.s.}: 
\begin{equation} \label{speed}
\lim_{n\to\infty} \frac{Z_n}{n} =: v_\mu  =  \left\{
\begin{array}{ll}
0, &\mbox{ if }  \langle\rho\rangle \geq 1,\\
\frac{1-\langle\rho\rangle}{1+\langle\rho\rangle} > 0 , &\mbox{ if } \langle\rho\rangle < 1.
\end{array}
\right.
\end{equation}
\end{itemize}
\end{proposition}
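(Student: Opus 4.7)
The plan is to follow Solomon's classical argument, splitting into two parts: recurrence versus transience via a harmonic-function / potential argument, and the limiting speed via an annealed computation of hitting times combined with an ergodic theorem.

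First I would reduce the question of recurrence/transience to the behaviour of the quenched hitting probabilities
$h_N(x) = P_x^\omega(T_N < T_{-N})$, where $T_y := \inf\{n \geq 0 \colon Z_n = y\}$. The function $x \mapsto h_N(x)$ is harmonic for the quenched walk, so the one-step relation
\begin{equation*}
h_N(x+1) - h_N(x) = \rho(x)\bigl(h_N(x) - h_N(x-1)\bigr),\qquad \rho(x) = \tfrac{1-\omega(x)}{\omega(x)},
\end{equation*}
yields, after iteration, the familiar expression
\begin{equation*}
h_N(0) = \frac{\sum_{k=1}^{N} \exp(V(k))}{\sum_{k=-N+1}^{N} \exp(V(k))},\qquad V(k):=\sum_{i=1}^{k}\log\rho(i),
\end{equation*}
with the analogous convention for $k\leq 0$. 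Applying the SLLN to the i.i.d.\ sequence $\{\log\rho(i)\}$, one reads off the criterion: if $\langle\log\rho\rangle<0$ then $V(k)\to-\infty$ as $k\to+\infty$ and $V(k)\to+\infty$ as $k\to-\infty$, so the numerator stays bounded away from the denominator and $h_\infty(0) = \lim_N h_N(0)\in (0,1]$; by uniform ellipticity the walk cannot get stuck, hence it is transient to $+\infty$. If $\langle\log\rho\rangle=0$, oscillation of the random walk $V$ on both sides (by the recurrence of mean-zero random walks on $\R$) forces both $P_0^\omega(T_N<T_{-\infty})\to 0$ and its mirror, giving recurrence. (The borderline of summability/non-summability of $\sum e^{V(k)}$ is the key dichotomy; uniform ellipticity ensures these quenched statements pass to the annealed measure.)

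For the speed, I would work with the hitting times $T_n$ and pass through the inversion $Z_n/n \to 1/E_0^\mu[T_1]$. The quenched expected traversal time $u(x) = E_x^\omega[T_{x+1}]$ satisfies the one-step recursion
\begin{equation*}
u(x) = \frac{1}{\omega(x)} + \rho(x)\, u(x-1),
\end{equation*}
which unfolds to $u(0) = \sum_{k\geq 0} \frac{1}{\omega(-k)}\prod_{j=0}^{k-1}\rho(-j)$. Because $\omega(-k)$ is independent of $\{\omega(-j)\colon 0\leq j<k\}$, taking the annealed expectation factorises each term, and using $\langle 1/\omega(0)\rangle = 1 + \langle\rho\rangle$ one obtains
\begin{equation*}
E_0^\mu[T_1] = \frac{1+\langle\rho\rangle}{1-\langle\rho\rangle}\quad\text{if }\langle\rho\rangle<1,\qquad E_0^\mu[T_1]=\infty\quad\text{if }\langle\rho\rangle\geq 1.
\end{equation*}
To deduce $T_n/n \to E_0^\mu[T_1]$ (with the right-hand side $+\infty$ allowed) I would invoke an ergodic argument for the sequence of increments $T_n - T_{n-1}$: under $P_0^\mu$ these increments, read together with the shifted environment, form a stationary and ergodic sequence (stationarity from shift-invariance of $\mu$; ergodicity from the tail-triviality of $\mu$ combined with the fact that after transience to the right the walk never revisits $(-\infty,n-1]$ after time $T_n$ on a set of full measure). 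Birkhoff then gives $T_n/n \to E_0^\mu[T_1]$ a.s., and inverting yields the stated formula for $v_\mu$ (with the case $\langle\rho\rangle\geq 1$ producing zero speed).

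The main obstacle I anticipate is the ergodicity step for the hitting-time increments in the zero-speed regime $\langle\log\rho\rangle < 0 \leq \log\langle\rho\rangle$: here $E_0^\mu[T_1]=\infty$, and one must argue that $T_n/n$ actually diverges rather than merely being bounded below, which requires a truncation of $u(x)$ together with a monotone-convergence passage to the limit. Everything else reduces to careful bookkeeping of i.i.d.\ products and the SLLN.
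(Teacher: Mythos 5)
The paper does not prove Proposition~\ref{prop:LLN}; it is quoted verbatim from Solomon \cite{Sol75}, so there is no internal argument to compare against. Your sketch is essentially Solomon's original route (gambler's-ruin/harmonic function for recurrence versus transience, and hitting times with an ergodic theorem for the speed), so the overall strategy is the right one, and the annealed identity $E_0^\mu[T_1] = \tfrac{1+\langle\rho\rangle}{1-\langle\rho\rangle}$ via $u(x) = \tfrac{1}{\omega(x)} + \rho(x)\,u(x-1)$ and the independence factorisation is correct.

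There is, however, a concrete error in the recurrence/transience part. With $h_N(x) = P_x^\omega(T_N < T_{-N})$, $h_N(-N)=0$, $h_N(N)=1$, iterating $h_N(x+1)-h_N(x)=\rho(x)\,(h_N(x)-h_N(x-1))$ and normalising gives
\begin{equation*}
h_N(0) \;=\; \frac{\sum_{k=-N}^{-1}\exp\bigl(V(k)\bigr)}{\sum_{k=-N}^{N-1}\exp\bigl(V(k)\bigr)},
\end{equation*}
with $V$ as in your convention (so $V(k) = -\sum_{i=k+1}^{0}\log\rho(i)$ for $k\le 0$); the numerator ranges over the \emph{negative} indices, not $k=1,\dots,N$ as you wrote. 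This is not cosmetic: with your formula, $\langle\log\rho\rangle<0$ gives a bounded numerator and a divergent denominator, hence $h_N(0)\to 0$, which contradicts your own conclusion that $h_\infty(0)\in(0,1]$ and would (wrongly) assert transience to $-\infty$. With the corrected numerator, the same SLLN reasoning gives $h_N(0)\to 1$ for $\langle\log\rho\rangle<0$, which is what you want. Secondly, the parenthetical justification of ergodicity for the increments $T_n-T_{n-1}$ is false as stated: even a right-transient walk will revisit $(-\infty,n-1]$ after time $T_n$ with positive probability (it only visits that half-line finitely often). Stationarity of $(T_n-T_{n-1})_n$ under $P_0^\mu$ indeed comes from shift-invariance of $\mu$, but ergodicity needs a more careful argument (e.g.\ the triviality of the invariant $\sigma$-field for the i.i.d.\ environment together with the fresh randomness of the walk, as in Zeitouni \cite{ZZ04}), and your Fatou/monotone-truncation plan for the zero-speed case $\langle\rho\rangle\ge 1$ is the right supplement.
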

%%%%%%%%%%%%%%%%%%%%%%%%%%%%%%%%%%%%%%%%

\noindent 
The above proposition shows that the speed of RWRE is a deterministic function of $\mu$ (or $\alpha$; recall~\eqref{alpha}). %Note that for $\alpha$ such that $\langle \log \rho \rangle < 0$ and $\langle\rho\rangle \geq 1$, the random walk is transient to the right with zero speed. %In this regime, $Z_n \to\infty$ as $n\to\infty$, but only sublinearly in $n$ due to the presence of \emph{traps}, i.e., local niches of the environment slowing the walk down.

In the recurrent case the scaling was identified by Sinai~\cite{S82} and the limit law by Kesten~\cite{K86}. The next proposition summarises their results. We write $\tod$ to denote convergence in distribution and $\todp$ to denote convergence in $L^p$.

%%%%%%%%%%%%%%%%%%%%%%%%%%%%%%%%%%%%%%%%
\begin{proposition}[{\bf Scaling limit: recurrent RWRE~\cite{S82},~\cite{K86}}] \label{prop:Rec}
\text{}\\
Let $\alpha$ be such that $\langle\log\rho\rangle = 0$ and $\sigma_0^2 = \langle\log^2\rho\rangle \in (0,\infty)$. 
Then, under the annealed measure $P_0^\mu$, 
\begin{equation} \label{Sinai}
\begin{aligned}
\frac{Z_n}{\sigma_0^2 \log^2 n} \tod V
\end{aligned}
\end{equation} 
where the \emph{Sinai-Kesten} random variable $V$  is defined by $P(V \in A) := \int_A v(x) \, dx$ with 
\begin{equation} \label{densityofV}
v(x) := \frac{2}{\pi} \sum_{k\in\N_0} \frac{(-1)^k}{2k+1} 
\exp \left[- \frac{(2k+1)^2 \pi^2}{8} |x| \right], \qquad x \in \R.
\end{equation}
\end{proposition}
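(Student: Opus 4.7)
The plan is to follow the classical Sinai--Kesten strategy, built around the random potential associated to the environment. Introduce $S_\omega\colon\Z\to\R$ with $S_\omega(0)=0$ and increments $S_\omega(x)-S_\omega(x-1)=\log\rho(x)$, where $\rho(x)=(1-\omega(x))/\omega(x)$. Under $\mu$ this is a centred i.i.d.\ random walk with step variance $\sigma_0^2$, and RWRE $Z$ can be read off $S_\omega$: its quenched reversible measure and its crossing times between any two sites admit closed-form expressions in terms of $e^{S_\omega}$. By Donsker's invariance principle, the rescaled potential $x\mapsto S_\omega(\lfloor(\log n)^2 x\rfloor)/\log n$ converges in distribution to $\sigma_0 B(x)$, with $B$ a standard Brownian motion.

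The second ingredient is Sinai's notion of a \emph{valley}: a triple $(a,b,c)$ with $a<b<c$ such that $b$ minimises $S_\omega$ on $[a,c]$ and both ascents $S_\omega(a)-S_\omega(b)$, $S_\omega(c)-S_\omega(b)$ exceed a prescribed depth. I would consider the minimal valley containing the origin whose depth is $\log n + h(n)$ for some slowly diverging correction $h(n)=o(\log n)$, and let $b_n$ denote its bottom. By the functional convergence of $S_\omega$ together with Brownian scaling, $b_n/[\sigma_0^2(\log n)^2]\tod V$, where $V$ is the position of the bottom of the analogous unit-depth valley around $0$ for standard Brownian motion $B$. The core probabilistic step is a quenched \emph{localisation} statement: for $\mu$-a.e.\ $\omega$, $P_0^\omega(|Z_n-b_n|>\varepsilon(\log n)^2)\to 0$. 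This rests on the classical fact that, for birth-death chains with conductances $e^{-S_\omega}$, the mean time to cross a barrier of height $h$ is of order $e^h$: a valley of depth exceeding $\log n$ therefore traps the walk for time much larger than $n$, while the descent to its bottom from inside takes time polynomial in $\log n$.

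Combining the localisation with the distributional convergence of $b_n$ yields~\eqref{Sinai}. The density~\eqref{densityofV} comes from Kesten's separate computation of the law of $V$, via first-passage identities for Brownian motion and an eigenfunction expansion of the associated generator on a Brownian excursion, which produces the alternating series in $(2k+1)$ with weights $\exp[-(2k+1)^2\pi^2|x|/8]$.

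The hardest part is the localisation step: one must simultaneously rule out that $Z$ escapes the significant valley before time $n$ and that it still lags far from $b_n$ at time $n$, uniformly in sufficiently typical environments. This is where uniform ellipticity~\eqref{uellcond} enters crucially, bounding individual transition probabilities and ensuring that crossings of the shallow sub-valleys inside the significant valley happen on a time scale negligible compared with $n$. The rest of the argument, namely invariance and the Brownian computation, is more mechanical.
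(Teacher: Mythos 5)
This proposition is not proved in the paper; it is a classical result cited from Sinai~\cite{S82} (scaling) and Kesten~\cite{K86} (limit law), and the paper simply restates it. Your sketch is a faithful outline of the standard argument in those references: introduce the potential $S_\omega$, apply Donsker's invariance principle to the rescaled potential at space scale $(\log n)^2$ and height scale $\log n$, isolate the smallest valley around the origin of depth just above $\log n$, prove Sinai's quenched localisation of $Z_n$ near its bottom $b_n$, deduce that the annealed limit of $Z_n/(\log n)^2$ coincides with that of $b_n/(\log n)^2$, and finally invoke Kesten's explicit computation of the density of the valley-bottom functional of Brownian motion to obtain~\eqref{densityofV}. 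Two points worth making precise if you were to flesh this out: Sinai's localisation is established in $\mu$-probability rather than $\mu$-a.s., which suffices for the annealed claim; and the identification of the limit as a unit-depth-valley functional of \emph{standard} Brownian motion requires the Brownian scaling step $\sigma_0 B(x)\overset{(d)}{=}B(\sigma_0^2 x)$ to be carried through carefully, since that is exactly where the $\sigma_0^2$ in the normalisation of \eqref{Sinai} originates; you flag this step but do not spell it out. Neither is a gap, only a place where a full proof would need detail, and both are handled in the cited references on which the paper implicitly relies.
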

%%%%%%%%%%%%%%%%%%%%%%%%%%%%%%%%%%%%%%%

\noindent 
Note that the law of $V$ is symmetric with finite variance $\sigma^2_V \in (0,\infty)$. It was shown in~\cite{AdH17} that for $\alpha$ satisfying \eqref{uellcond}, under the annealed measure $P^{\mu}_0$,
\begin{equation}\label{sinaip}
\frac{Z_n}{\sigma_0^2\log^2 n} \todp V \qquad \forall \, p > 0,
\end{equation}
%%
%i.e., the convergence in distribution in holds in $L^p$. 
In the transient case the scaling and the limit law were identified by Kesten, Kozlov and Spitzer~\cite{KKS75}. The next proposition recalls their result only for the case where the scaling and the limit law are classical. We say that  $\alpha$ is \emph{$s$-transient} when $\langle\log \rho\rangle<0$, $\langle\rho^s\rangle=1$ and $\langle\rho(\log\rho)_+ \rangle<\infty$.

%%%%%%%%%%%%%%%%%%%%%%%%%%%%%%%%%%%%%%%%
\begin{proposition}[\bf Scaling limit: transient RWRE~\cite{KKS75}] \label{prop:SL_trans_RWRE}
\text{}\\
Let $\alpha$ be  $s$-transient with $s \in(2,\infty)$. Then there exists a $\sigma_s \in (0,\infty)$ such that, under the annealed measure $P^\mu_0$, 
\begin{equation}\label{normalstatic}
\frac{Z_n - v_\mu n}{\sigma_s \sqrt{n}} \tod  \Phi,
\end{equation}
where $\Phi$ stands for a standard normal random variable. 
\end{proposition}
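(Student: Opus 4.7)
The plan is to reduce the annealed CLT for $Z_n$ to an annealed CLT for the hitting times $T_n = \inf\{k \geq 0 \colon Z_k = n\}$, and to establish the latter through an i.i.d.\ regeneration decomposition. Since $\langle\log\rho\rangle < 0$ entails $P^\mu_0$-a.s.\ transience to the right with $T_n < \infty$, and $\{Z_n \geq k\} = \{T_k \leq n\}$, a standard inversion combined with the positive speed $v_\mu$ from Proposition~\ref{prop:LLN} shows that the CLT
\begin{equation*}
\frac{T_n - n/v_\mu}{\widetilde{\sigma}_s \sqrt{n}} \tod \Phi, \qquad \widetilde{\sigma}_s \in (0,\infty),
\end{equation*}
is equivalent to the target CLT \eqref{normalstatic}, with $\sigma_s = v_\mu^{3/2}\widetilde{\sigma}_s$. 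So it suffices to prove a CLT for $T_n$.

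Next I would introduce the regeneration times $0 < \tau^{(1)} < \tau^{(2)} < \cdots$, defined as those instants at which $Z$ attains a new running maximum that it thereafter never revisits from below. Under $\langle\log\rho\rangle < 0$ these are a.s.\ finite, and a classical coupling argument (see Zeitouni~\cite{ZZ04}) shows that the joint space--time blocks
\begin{equation*}
\bigl(Z_{\tau^{(k)}+\cdot} - Z_{\tau^{(k)}},\, \tau^{(k+1)} - \tau^{(k)}\bigr), \qquad k \geq 1,
\end{equation*}
are i.i.d.\ under $P^\mu_0$. Writing $S_k = \tau^{(k+1)} - \tau^{(k)}$ and $X_k = Z_{\tau^{(k+1)}} - Z_{\tau^{(k)}}$, one has $\tau^{(n)} = \tau^{(1)} + \sum_{k=1}^{n-1} S_k$, so a CLT for $\tau^{(n)}$ is immediate from the classical i.i.d.\ CLT, provided $S_1$ has finite variance; the expectation is then forced to be $E^\mu_0[S_1] = E^\mu_0[X_1]/v_\mu$ by the LLN.

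The crux, and the genuine content of~\cite{KKS75}, is the tail estimate
\begin{equation*}
P^\mu_0(S_1 > t) \asymp t^{-s}, \qquad t \to \infty,
\end{equation*}
which under $s > 2$ yields $E^\mu_0[S_1^2] < \infty$. This is the main obstacle. The standard route is the branching-process-in-random-environment representation: the number of down-steps performed out of each site before the walk first hits the next site on the right is a BPRE with immigration whose generating mechanism has offspring mean $\rho$. The stationary law of this chain satisfies a random affine recursion $Y_{n+1} = \rho_n Y_n + \xi_n$, and Kesten's renewal theorem for such recursions produces a regularly varying tail of index exactly $s$, the exponent being selected by the equation $\langle\rho^s\rangle = 1$ (with $\langle\rho(\log\rho)_+\rangle < \infty$ serving as Kesten's auxiliary integrability hypothesis).

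Once the tail bound is in hand, the i.i.d.\ CLT gives a CLT for $\tau^{(n)}$. Transferring it to $T_n$ is routine because the spatial increments $X_k$ have stretched-exponential moments, so for $k_n$ the (random) index with $Z_{\tau^{(k_n)}} \leq n < Z_{\tau^{(k_n+1)}}$ one has $|n - Z_{\tau^{(k_n)}}| = O(\log n)$ a.s., and the corresponding time gap is $o(\sqrt{n})$ in probability. Inverting the resulting CLT for $T_n$ via the relation $\{Z_n \geq k\} = \{T_k \leq n\}$ delivers~\eqref{normalstatic}.
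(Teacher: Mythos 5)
Your outline is correct and is the standard modern route to the KKS CLT when $s>2$: pass to hitting times, exploit the i.i.d.\ regeneration blocks (Sznitman--Zerner), and derive $E^\mu_0[S_1^2]<\infty$ from the power-law tail $P^\mu_0(S_1>t)\asymp t^{-s}$, which in turn comes from Kesten's renewal theorem for the random affine recursion underlying the branching-process representation of crossing numbers, with the exponent fixed by $\langle\rho^s\rangle=1$. Note, however, that the paper does not prove Proposition~\ref{prop:SL_trans_RWRE} itself — it is cited to~\cite{KKS75} — and the machinery the paper actually deploys to upgrade this CLT to $L^p$-convergence (Theorem~\ref{thm:RWREextra1}, proved in Appendix~\ref{appA}) is genuinely different from yours: following Zeitouni~\cite[Sec.~2.2]{ZZ04} one writes $Z_n-v_\mu n = M_n - S_n - R_n$, with $M_n$ a $P^\omega_0$-martingale built from the harmonic coordinates $\Delta(j,\omega)=-1+v_\mu\,\Sigma(\theta^j\omega)$, $S_n$ a mean-zero stationary functional of the environment, and $R_n$ a remainder, and then invokes a martingale CLT together with a CLT for stationary sequences. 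Your regeneration route makes the role of the threshold $s=2$ transparent (it is exactly the second-moment condition on a block length), but it hinges on the nontrivial Kesten tail estimate. The martingale/harmonic-coordinate decomposition avoids regeneration times and tail asymptotics altogether and, being an explicit algebraic identity for $Z_n - v_\mu n$, is directly amenable to moment estimates — which is precisely why the paper uses it for the $L^p$ refinement. Both approaches are valid; yours is closer in spirit to the original~\cite{KKS75}.
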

%%%%%%%%%%%%%%%%%%%%%%%%%%%%%%%%%%%%%%%%%

\noindent 
%The scaling and the limit laws were identified also for the cases $s \in (0,1)$, $s = 1$, $s \in (1,2)$ and $s = 2$. For the first three cases the scales are $n^s$, $n/\log^2 n$ and $n^{1/s}$, respectively, and the limit laws are functions of stable laws. For the fourth case the scale is $\sqrt{n \log n}$ and the limit law is Gaussian. In the present paper we only consider the case $s \in (2,\infty)$.  

%%%%%%%%%%%%%%%%%%%%%%%%%%%%%%%%%%%%%%%%%%%%%%%%%%%%%%%%%%%%
\subsection{RWCRE: Cooling} \label{model}

The cooling random environment is the \emph{space-time} random environment built by partitioning $\N_0$, and assigning independently to each piece an environment sampled from $\mu$ in~\eqref{alpha} (see Fig.~\ref{fig:CRE}). Formally, let $\tau \colon\, \N_0 \to \N_0$ be a strictly increasing function with $\tau(0) = 0$, referred to as the \emph{cooling map}. The cooling map determines a sequence of \emph{refreshing times} $\prt{\tau(k)}_{k\in \N_0}$. %that we use to construct the dynamic random environment.

%%%%%%%%%%%%%%%%%%%%%%%%%%%%%%%%%%%%%%%%%%%%%
\begin{definition}[{\bf Cooling Random Environment}] 
\text{}\\
Given a cooling map $\tau$ and an i.i.d.\ sequence of random environments $\Omega=(\omega_k)_{k\in\N}$ with law $\mu^\N$, the \emph{cooling random environment} is built from the pair $(\Omega,\tau)$ by assigning, for each $k\in\N$, the environment $\omega_k$ to the $k$-th interval $I_k$ defined by
%%
%\begin{equation} \label{interval}
$I_k := [\tau(k-1), \tau(k))$, 
%\end{equation}
 which has size
%%
%\begin{equation} \label{coolingreg}
$T_k := \tau(k) - \tau(k - 1)$ for $k \in \N$.
%\end{equation}
%%
\end{definition}
%%%%%%%%%%%%%%%%%%%%%%%%%%%%%%%%

%%%%%%%%%%%%%%%%%%%%%%%%%%%%%%%%%%%%%%%%
\begin{definition}[{\bf RWCRE}] 
 Let $\tau$ be a cooling map and $\Omega$ an environment  sequence sampled from $\mu^\N$. We call \emph{Random Walk in Cooling Random Environment} (RWCRE) the Markov chain $X = (X_n)_{n\in\N_0}$ with state space $\Z$ and transition probabilities
\begin{equation} \label{Ker_Om}
P^{\Omega,\tau}(X_{n+1} = x + e \mid X_n = x) = \left\{
\begin{array}{ll}
\omega_{\ell(n)}(x), &e = 1,\\
1 - \omega_{\ell(n)}(x), &e = - 1,
\end{array}
\right. 
%\quad ,
\end{equation}
for $x \in \Z,\,n \in \N_0$, where
%%
%\begin{equation} \label{ell}
$\ell(n) := \inf\{k\in\N\colon\, \tau(k) > n\}$,
%\end{equation}
%%
 is the index of the interval that $n$ belongs to. Similarly to Definition~\ref{RWREdef}, we denote by
\begin{equation} \label{RWCREmeas}
P_x^{\Omega,\tau}(\cdot), \qquad  P_{x}^{\mu,\tau}(\cdot) := \int_{[(0,1)^\Z]^\N} 
P_x^{\Omega,\tau}(\cdot)\,\mu^\N(\dd\Omega),
\end{equation}
 the corresponding \emph{quenched} and \emph{annealed} measures, respectively.
\end{definition} 
%%%%%%%%%%%%%%%%%%%%%%%%%%%%%%%%%%%%%%%%%
%In words, RWCRE moves according to a given environment sampled from $\mu$ until the next refreshing time, when a new environment is sampled from $\mu$. The increments of the random walk trajectories are independent across the intervals. During the interval $I_k$, RWCRE behaves like RWRE in the environment $\omega_k$. Our goal is to understand in what way this makes RWCRE behave similarly as RWRE.       
%%%%%%%%%%%%%%%%%%%%%%%%%%%%%%%%%%%%%%%%%%%%%%%%
\begin{figure}[htbp]
\begin{center}
\includegraphics[clip,trim=3.6cm 24cm 2.8cm 2cm, width=.95\textwidth]{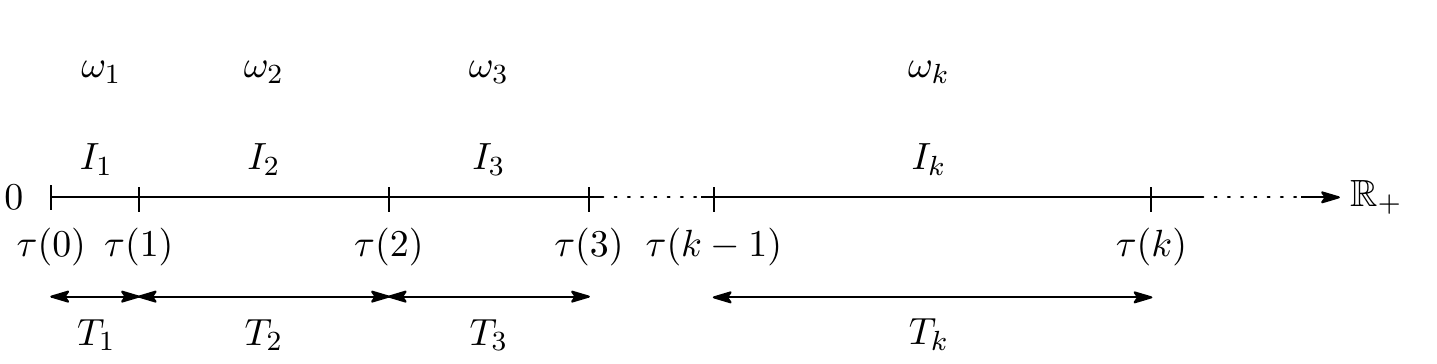}   
\caption{Structure of the cooling random environment $(\Omega,\tau)$.}
\label{fig:CRE}
\end{center}
\end{figure}
%%%%%%%%%%%%%%%%%%%%%%%%%%%%%%%%%%%%%%%%%%%%%%%%%

The position $X_n$ admits a decomposition into independent pieces.  For $k \in \N$, define the \emph{refreshed increments}  as
%%
%\begin{equation} \label{space_incr}
$Y_k := X_{\tau(k)} - X_{\tau(k-1)}$, and  for $n \in \N_0$, define the \emph{boundary increment} as $\bar{Y}^n := X_n - X_{\tau(\ell(n)-1)}$
%\end{equation}
%%
and the \emph{running time at the boundary} as 
%%
%\begin{equation} \label{time_incr}
$\bar{T}^n := n - \tau(\ell(n)-1)$.
%\end{equation}
%%
Note that 
\begin{equation} \label{time_incr*}
\sum_{k=1}^{\ell(n)-1} T_k + \bar{T}^n = n.
\end{equation}
By construction, we can write $X_n$ as the sum 
\begin{equation} \label{X_dec}
X_n = \sum_{k=1}^{\ell(n)-1} Y_k + \bar Y^n, \quad  n\in\N_0.
\end{equation}
This decomposition shows that, in order to analyse $X$, we must analyse the vector 
%%
%\begin{equation}
$(Y_1,\ldots,Y_{\ell(n)-1},\bar{Y}^n)$
%\end{equation}
%%
consisting of independent components, each distributed as an increment of $Z$ (defined in Section~\ref{RWRE}) in a given environment over a given length of time determined by $\Omega$, $\tau$ and $n$. Fig.~\ref{fig:RWCRE} illustrates the decomposition of $X_n$. 
%More precisely, for any measurable function $f\colon\,\Z\to\R$, any $\Omega$ sampled from $\mu^\N$ and any $\tau$,
%%
%\begin{equation} \label{equiv_distr}
%E^{\Omega,\tau}_0\crt{f(Y_k)} = E^{\omega_k}_0\crt{f(Z_{T_k})}, \quad k\in\N,
%\qquad E^{\Omega,\tau}_0\crt{f(\bar{Y}^n)} = E^{\omega_{\ell(n)}}_0\crt{f(Z_{\bar{T}^n})}.
%\end{equation}
%%
%%%%%%%%%%%%%%%%%%%%%%%%%%%%%%%%%%%%%%%%%%%%%%%%%
\begin{figure}[htbp]
\begin{center}
\includegraphics[clip,trim= 2cm 10cm 1cm 14cm, width=.95\textwidth]{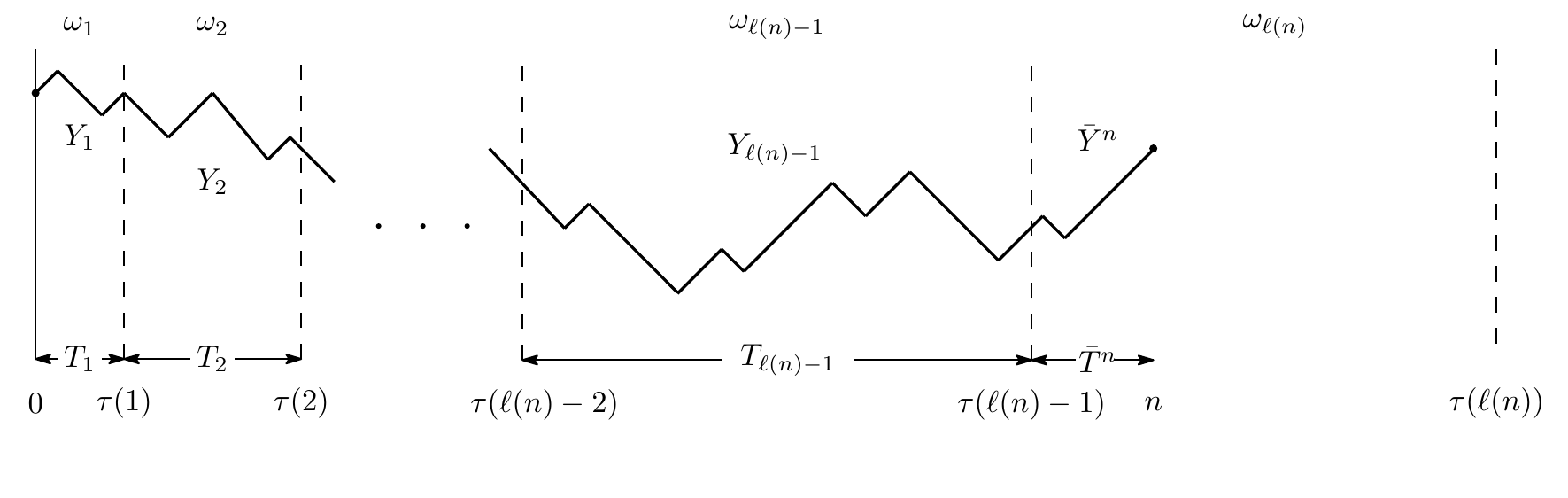}
\caption{The decomposition of RWCRE
into pieces of RWRE 
as in~\eqref{X_dec}.}
\label{fig:RWCRE}
\end{center}
\end{figure}
%%%%%%%%%%%%%%%%%%%%%%%%%%%%%%%%%%%%%%%%%%%%%%%%% 

To ease the notation, we will sometimes write 
%%
%\begin{equation}\label{Y0}
$T_0 := \bar{T}^n$, and $ Y_0 := \bar{Y}^n.$
%\end{equation}
%%
%In this way, the right-hand sides of~\eqref{time_incr*} and ~\eqref{X_dec} can be written as $\sum_{k=0}^{\ell(n)-1} T_k$ and $\sum_{k=0}^{\ell(n)-1} Y_k$.

%%%%%%%%%%%%%%%%%%%%%%%%%%%%%%%%%%%%%%%%%%%%%%%%%%%%%%%%%%%%
\subsection{Main results for RWCRE} \label{results}

In what follows, we write $\bb{P}$ for the annealed measure in~\eqref{RWCREmeas} when the random walk starts at the origin, suppressing $\mu,\tau, 0$ from the notation. We will denote by $\bb{E} $ and $\var$ the corresponding expectation and variance. We will further denote by $\mathfrak{X}_{n}$ the variance-scaled displacement at time $n\in \N_0$, 
\begin{equation} \label{e:sca}
\mathfrak{X}_{n} := \frac{X_{n}}{\sqrt{\var (X_n)}},
\end{equation}
with the convention that $\mathfrak{X}_{0} = 0$.

%%%%%%%%%%%%%%%%%%%%%%%%%%%%%%%%%%%%%%%%%%%%%%%%%%%%%%%%%%%%
\subsubsection{Recurrence versus transience} \label{ss:RvT}

We start by exploring how the cooling map affects the recurrence versus transience criterion for RWRE (see Proposition~\ref{prop:LLN}). A few remarks are in place. Since for any event $A$,
\begin{equation}
\bb{P}(A)=1 \quad \iff \quad P_0^{\Omega,\tau}(A)=1 \quad \mu^\N\text{-a.s}.,
\end{equation}
we do not distinguish between quenched and annealed statements when it comes to zero-one laws. Moreover, due to the resampling, RWCRE is \emph{tail-trivial}, i.e., all events in the tail sigma-algebra have probability zero or one.  We know from Proposition~\ref{prop:LLN} that RWRE is recurrent if and only if $\langle\log\rho\rangle = 0$.  We say that $\alpha$ is \emph{recurrent} or \emph{right-transient} when $\langle\log\rho\rangle = 0$, respectively, $\langle\log\rho\rangle< 0$. For RWCRE the classification of recurrence versus transience is more delicate, because it also depends on the cooling map $\tau$. In what follows we say that $(\alpha,\tau)$ is \emph{recurrent} or \emph{transient} when 
\begin{equation}
\bb{P}(X_n=0 \,\, \text{i.o.})=1 \quad \text{or} \quad \bb{P}(X_n=0 \,\, \text{i.o.})=0.
\end{equation}
We say that $(\alpha,\tau)$ is \emph{right transient} or \emph{left transient} when
\begin{equation}
\bb{P}\left(\lim_{n\to \infty} X_n=\infty \right)=1 \quad \text{or} \quad \bb{P}\left(\lim_{n\to \infty} X_n=-\infty \right)=1.
\end{equation}
By tail triviality, $\{0,1\}$ are the only possible values for the above events. 

Our first theorem gives two conditions on the cooling map under which recurrence and transience are not affected by the resampling. 

%%%%%%%%%%%%%%%%%%%%%%%%%%%%%%%%%%%%%%%%%%%%%%%%%%

\begin{theorem}[{\bf Stability of recurrence versus transience}] \label{thm:RvT}
\text{}\\ \vspace{-.5cm}
%%%%%
\begin{itemize}
\item[{\rm(a)}] If $\alpha$ is right-transient, then $(\alpha,\tau)$ is right-transient for all $\tau$ such that 
\begin{equation} \label{truecooling}
\lim_{k\to\infty} T_k=\infty.
\end{equation}
\item[{\rm(b)}] If $\alpha$ is recurrent, then $(\alpha,\tau)$ is recurrent when 
\begin{equation} \label{sufficient_rec}
\liminf_{n\to\infty} \abs{\bb{E} \left[ \mathfrak{X}_n \right]} = 0.
\end{equation}
The latter holds for all symmetric $\alpha$ and all $\tau$,
and also for all non-symmetric $\alpha$ when $\tau$ is such that 
\begin{equation}\label{sufficient_fast}
\liminf_{k\to\infty} \frac{1}{k^\gamma} \log T_k >0 \text{ for some } \gamma > \frac34.
\end{equation} 
\end{itemize}
\end{theorem}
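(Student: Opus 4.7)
My plan for \textbf{(a)} is to work with the decomposition~\eqref{X_dec} and exploit the independence of the refreshed increments $Y_k\eqd Z_{T_k}$, splitting on the sign of $v_\mu$. If $v_\mu>0$, uniform ellipticity gives $|Y_k|\le T_k$, Proposition~\ref{prop:LLN} upgrades $Z_n/n\to v_\mu$ to convergence in $L^p$, and a Kolmogorov-type SLLN for independent uniformly bounded summands produces $X_{\tau(K)}/\tau(K)\to v_\mu>0$ a.s.; a parallel LLN argument for the boundary piece $\bar Y^n$ yields $X_n/n\to v_\mu$ and hence $X_n\to\infty$. If $v_\mu=0$ (compatible with $\langle\log\rho\rangle<0$ when $\langle\rho\rangle\ge 1$), the RWRE is still transient to $+\infty$, so $G:=-\min_n Z_n$ is a.s.\ finite with $\bb P(G>M)\to 0$; by independence the pieces' minima $G_k$ form an i.i.d.-type sequence with the same tail, while simultaneously $Y_k\to+\infty$ in probability. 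A truncation plus conditional Borel--Cantelli argument along the refreshing times yields $X_{\tau(K)}\to+\infty$, and the boundary piece is absorbed using the positivity of the KKS stable limit together with the same tail bound on the minima.

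For \textbf{(b)} I split the proof into three sub-claims. \emph{(b.i) \eqref{sufficient_rec}$\Rightarrow$recurrence.} By tail triviality it suffices to show that $\{X_n=0\}$ occurs infinitely often with positive $\bb P$-probability. Along a subsequence $(n_j)$ on which $|\bb E[\mathfrak X_{n_j}]|\to 0$, a local-type lower bound $\bb P(|X_{n_j}|\le C)\gtrsim 1/\sqrt{\var(X_{n_j})}$, obtained through a characteristic-function / Berry--Esseen analysis of the independent sum~\eqref{X_dec}, combined with the within-interval recurrence of RWRE (so that a walk starting moderately close to $0$ visits $0$ during the next interval $I_k$ with probability $\to 1$ as $T_k\to\infty$), yields via a conditional Levy--Borel--Cantelli argument the desired infinite-visit property. \emph{(b.ii) Symmetric $\alpha$.} The reflection $\omega(x)\mapsto 1-\omega(-x)$ preserves $\mu$, hence $Z_n\eqd -Z_n$ under $P^\mu_0$, $E^\mu_0[Z_n]=0$, and by independence $\bb E[X_n]=0$, so~\eqref{sufficient_rec} holds trivially with value zero.

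\emph{(b.iii) \eqref{sufficient_fast}$\Rightarrow$\eqref{sufficient_rec} for non-symmetric $\alpha$.} I would combine two refined RWRE estimates stated in Section~\ref{RWREres}: an upper bound $|E^\mu_0[Z_n]|\le C(\log n)^{\beta}$ for some $\beta<2$ close to $4/3$, and a quantitative version of Kesten's $L^2$ asymptotic $\var^\mu_0(Z_n)\asymp (\log n)^4$ refining Proposition~\ref{prop:Rec}. Plugging these into~\eqref{X_dec} gives
\[
\bigl|\bb E[X_{\tau(K)}]\bigr|\lesssim \sum_{k=1}^{K}(\log T_k)^{\beta},\qquad \var(X_{\tau(K)})\asymp \sum_{k=1}^{K}(\log T_k)^{4}.
\]
Inserting $\log T_k\gtrsim k^\gamma$ from~\eqref{sufficient_fast} and comparing the dominant powers of $K$ yields $|\bb E[X_{\tau(K)}]|/\sqrt{\var(X_{\tau(K)})}\lesssim K^{(\beta-2)\gamma+1/2}\to 0$ whenever $(2-\beta)\gamma>1/2$; for $\beta\approx 4/3$ this is exactly $\gamma>3/4$.

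The main obstacles I foresee are, in order of difficulty, the refined moment bound on $|E^\mu_0[Z_n]|$ underpinning (b.iii), which requires quantitative tail control on the Sinai--Kesten variable $V$ going strictly beyond Proposition~\ref{prop:Rec} (and is precisely what the appendices are devoted to), and the local-type return lower bound in (b.i), which is a genuine strengthening of the $L^p$ scaling~\eqref{sinaip}. Part (a) in the sub-ballistic case $v_\mu=0$ is the most delicate sub-case of the recurrence/transience dichotomy, but should yield to the Borel--Cantelli scheme based on positivity of the KKS limit.
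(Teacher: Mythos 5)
Your outline is right in spirit for (b.ii) and (b.iii) — the paper indeed obtains recurrence under~\eqref{sufficient_fast} by combining the mean bound of Theorem~\ref{thm:RWREextra2}(III) (which, after unscaling, gives $|E^\mu_0[Z_n]|\leq C\log^{2-\gamma}n$ for any $\gamma<2/3$, i.e.\ exponent arbitrarily close to $4/3$ from above) with the variance asymptotics $\var(Y_k)\asymp\log^4 T_k$ from~\eqref{sinaip}, and a H\"older/power-counting comparison that matches your calculation exactly, modulo the boundary piece $\bar Y^n$ which must be handled separately via~\eqref{bsplit}. But your proposed routes for (a) and (b.i) contain real gaps.

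For (a), your plan to split on $v_\mu$ and, in the ballistic case, invoke a Kolmogorov-type SLLN, needs variance control on the independent (but \emph{not} i.i.d., and not uniformly bounded) increments $Y_k$: you would want something like $\sum_k\var(Y_k)/\tau(k)^2<\infty$. For $s>2$ this holds because $\var(Y_k)\asymp T_k$ and $\sum T_k/\tau(k)^2<\infty$, but for $s\in(1,2]$ the annealed fluctuations of $Z_n$ are on scale $n^{1/s}\gg\sqrt n$ with a stable (infinite-variance) limit, so the variance bound you need is not available, and ``$L^p$ convergence of $Z_n/n$'' (which follows from bounded a.s.\ convergence) does not by itself yield the SLLN. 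The paper's argument avoids both the case split and the second moment entirely: it introduces the leftmost record $W:=\inf_n Z_n$, proves (Lemma~\ref{l:fmi}) that $E^\mu_0[-W]<\infty$ with an exponential tail, and uses the pathwise lower bound $X_{\tau(\ell)}\geq\sum_k Z^{(k)}_{T_k}\Ind{\{Z^{(k)}_{T_k}>a\}}+\sum_k W^{(k)}$. The first sum grows linearly in $\ell$ because $\bb P(Y_k>a)\to 1$ for any fixed $a$ (needing only $Z_n\to+\infty$ a.s.\ and $T_k\to\infty$, not ballisticity), and the second sum is controlled by the LLN for i.i.d.\ copies of $W$; a Borel--Cantelli step upgrades this from refreshing times to the full sequence. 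This works uniformly in $s$, including the sub-ballistic case $v_\mu=0$.

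For (b.i), the local-type lower bound $\bb P(|X_{n_j}|\leq C)\gtrsim 1/\sqrt{\var(X_{n_j})}$ that you identify as a key obstacle is \emph{not needed}, and establishing it (for an inhomogeneous sum of non-bounded Sinai--Kesten-type increments plus a boundary term) would be considerably harder than anything in the paper. The paper instead picks a subsequence $(n_i)$ along which $\lambda_{\tau,n_i}\to\lambda_*$ pointwise (so that, by Theorem~\ref{thm:rlpts} together with~\eqref{sufficient_rec}, $X_{n_i}/\sqrt{\var(X_{n_i})}\tod V^{\otimes\lambda_*}+a\Phi$) \emph{and} along which $n_{i-1}<\tfrac12\sqrt{\var(X_{n_i})}$. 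Because the limit law has full support on $\R$, $\bb P(X_{n_i}>\sqrt{\var(X_{n_i})})>\gep$ uniformly, hence also $\bb P(X_{n_i}-X_{n_{i-1}}>\tfrac12\sqrt{\var(X_{n_i})})>\gep$. The increments $X_{n_i}-X_{n_{i-1}}$ are independent under $\bb P$, so Borel--Cantelli gives this event (and its mirror image) infinitely often; combined with $|X_{n_{i-1}}|\leq n_{i-1}<\tfrac12\sqrt{\var(X_{n_i})}$, this forces $X_{n_i}>0$ i.o.\ and $X_{n_i}<0$ i.o., and the nearest-neighbour structure of the walk then yields infinitely many returns to $0$. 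No local CLT, no characteristic-function analysis, and no ``within-interval recurrence'' of RWRE are needed — only convergence in distribution to a full-support law, independence, and the fact that $\var(X_n)\to\infty$.
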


\noindent
Non-symmetric $\alpha$ means that the laws of $\omega$ and $\widetilde{\omega}$ are different (see below \eqref{+0R}). Note that~\eqref{sufficient_fast} is much more stringent than~\eqref{truecooling}.
% and corresponds to a regime where the resampling is so slow that it cannot affect the recurrence. THIS IS WRONG???????

\medskip\noindent
{\bf Remark:} If the refreshing increments stay bounded (a regime that in \cite{AdH17} was referred to as `no cooling'), then RWCRE has little relation to RWRE and no resemblance is to expected. 
%An example is the cooling map $\tau(k)=k$, $k\in\N_0$, where the environment is resampled every unit of time. In this case RWCRE is, under the annealed law, equivalent to homogeneous random walk. 

\medskip
A recurrence criterion for general cooling maps is lacking and is presumably delicate, as shown by the following examples for which a weaker form of divergence of the increments is still in force. To weaken~\eqref{truecooling} we consider refreshing time increments that \emph{Cesaro diverge}, i.e., increments satisfying
\begin{equation} \label{Cesarocooling}
\lim_{\ell \to\infty} \frac{1}{\ell} \sum_{k=1}^\ell T_k= \infty.
\end{equation}

%%%%%%%%%%%%%%%%%%%%%%%%%%%%%%%%%%%%%%%%
\paragraph{\bf Counterexamples to stability}
\label{Rexamples}

\begin{itemize}
\item[{\bf(Ex.1)}] \emph{Right-transient can turn into left-transient or recurrent}: 
There exist a right-transient $\alpha$ and two cooling maps $\tau'=\tau'(\alpha)$ and $\tau''=\tau''(\alpha)$ satisfying~\eqref{Cesarocooling} such that $(\alpha,\tau')$ and $(\alpha,\tau'')$ are left-transient and recurrent, respectively.

\item[{\bf(Ex.2)}]\emph{Recurrent can turn into transient}:
There exist a recurrent $\alpha$ and a cooling map $\tau=\tau(\alpha)$ satisfying~\eqref{truecooling} such that $(\alpha,\tau)$ is transient.
\end{itemize}

\noindent
In Section~\ref{s:rxt} we prove Theorem~\ref{thm:RvT} and show {\bf (Ex.1)} and {\bf (Ex.2)}.
%are given in Section~\ref{s:rxt}.  

%%%%%%%%%%%%%%%%%%%%%%%%%%%%%%%%%%%%%%%%%%%%%%%%%%%%%%%%%%%%
\subsubsection{Fluctuations in the Sinai regime}
\label{subsec:recfl}

The following statements identify the scaling limits of RWCRE for recurrent $\alpha$. They show that the scaling depends in a delicate way on the cooling map. In particular, Theorem~\ref{thm:rlpts} below gives a characterisation of the possible limit points as mixtures of Sinai-Kesten and Gaussian random variables, while Corollary~\ref{cor:rRecThree} and {\bf (Ex.3)}--{\bf (Ex.6)} below give a further characterisation of the various possible regimes.  

To state our results we need the following definitions. Set
\begin{equation} \label{e:lfp} %lambda final parameter
\lambda_{\tau,n}(k) :=
\frac{\sqrt{\var (Y_k)}}{\sqrt{\var(X_n)}}\,\Ind{\{0\leq k<\ell(n)\}},  \quad n \in \N,\, k\in \N_0,
\end{equation}
and $\lambda_{\tau,0}(k) := \delta_{k0}$, $k\in\N_0$. Note that, by \eqref{X_dec}, $\lambda_{\tau,n}$ is a vector of real numbers with unit $\ell_2(\bb{N}_0)$-norm, i.e.,
%%
%\begin{equation}\label{e:l2norm}
$\norm{\lambda_{\tau,n}}_{2}^2 := \sum_{k\in\N_0} \lambda_{\tau,n}(k)^2 = 1$.
%\end{equation}
%%
With this notation, we can write
\begin{equation} \label{e:c3} %coupling representation
\mathfrak{X}_n - \bb{E}[\mathfrak{X}_n]
= \sum_{k=0}^{\ell(n)-1} \lambda_{\tau,n}(k)\,
\frac{Y_k - \bb{E}\crt{Y_k}}{\sqrt{\var(Y_k)}}.
\end{equation}
Let $(V_j)_{j \in \bb{N}_0}$ be  a family of i.i.d.\ Sinai-Kesten random variables (see~\eqref{densityofV}). For $\lambda = \prt{\lambda(j)}_{j \in \bb{N}_0}$, define the $\lambda$-mixture of normalised Sinai-Kesten random variables as
\begin{equation} \label{e:KSmix} % Kesten Sinai mixture
V^{\otimes \lambda} := \sum_{j\in\N_0} \lambda(j) (\sigma_V^{-1}V_j).
\end{equation}

For $\lambda \in \ell_2(\bb{N}_0)$, let $\lambda^\downarrow$ be the vector obtained from $\lambda$ by reordering the entries of $\lambda$ in decreasing order. Consider the equivalence relation
%%
%\begin{equation}\label{sim}
%\begin{aligned}
$\lambda\sim \lambda'$
when 
$\lambda^\downarrow
=\lambda'^\downarrow$
%\end{aligned}
%\end{equation}
%%
and put $[\lambda] := \chv{\lambda' \in \ell_2(\bb{N}_0)\colon \lambda' \sim \lambda }$. The following lemma, which is proven in Section~\ref{SKmix}, guarantees that up to reordering $V^{\otimes \lambda}$ corresponds to a unique vector $\lambda$. 

%%%%%%%%%%%%%%%
\begin{lemma}[{\bf Characterisation of Sinai-Kesten mixtures}]
\label{SKlemma}
\text{}\\
$V^{\otimes \lambda}$ and $V^{\otimes \lambda'}$ have different distributions if and only if $[\lambda] \neq [\lambda']$. 
\end{lemma}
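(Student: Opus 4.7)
My plan is to dispatch ``$\Leftarrow$'' by an exchangeability argument on i.i.d.~sequences, and then for ``$\Rightarrow$'' to translate equality in distribution into equality of power sums of $(\lambda(j))_j$ and $(\lambda'(j))_j$ via characteristic functions, and finally close via a moment-uniqueness argument on a compact interval. I work throughout under the standing assumption that $\lambda,\lambda'$ have non-negative entries (the regime relevant for \eqref{e:lfp}), since by symmetry of $V$ the law of $V^{\otimes\lambda}$ only sees the multiset of absolute values of the coordinates. The easy direction is immediate: if $\lambda\sim\lambda'$ then some bijection $\pi\colon\N_0\to\N_0$ realises $\lambda'=\lambda\circ\pi$, and i.i.d.-ness gives $(V_{\pi(j)})_j\stackrel{d}{=}(V_j)_j$, whence $V^{\otimes\lambda'}=\sum_j \lambda(\pi(j))\sigma_V^{-1}V_j\stackrel{d}{=}\sum_j \lambda(\pi(j))\sigma_V^{-1}V_{\pi(j)}=V^{\otimes\lambda}$.

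\textbf{Converse via cumulants.} Assume $V^{\otimes\lambda}\stackrel{d}{=}V^{\otimes\lambda'}$ and set $\phi(t):=\EE[\exp(itV/\sigma_V)]$. The density \eqref{densityofV} has exponential tails with rate $\pi^2/8$, so $V$ has a moment generating function in a neighbourhood of $0$ and $\log\phi$ is analytic there; by symmetry of $V$, only even cumulants $\kappa_{2m}:=\kappa_{2m}(V/\sigma_V)$ survive. Taking logarithms in $\prod_j \phi(\lambda(j)t)=\prod_j\phi(\lambda'(j)t)$ and matching Taylor coefficients at $t=0$ yields
\[
\kappa_{2m}\sum_j \lambda(j)^{2m}=\kappa_{2m}\sum_j \lambda'(j)^{2m},\qquad m\geq 1.
\]
In particular for $m=1$ ($\kappa_2=1$) one reads off $\|\lambda\|_2=\|\lambda'\|_2$, and for each $m$ in the index set $M:=\{m\geq 1\colon \kappa_{2m}\neq 0\}$ the power sums $p_m(\lambda):=\sum_j \lambda(j)^{2m}$ agree. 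I next encode these as moments of the compactly-supported probability measure
\[
\nu_\lambda:=\|\lambda\|_2^{-2}\sum_{j\colon\lambda(j)>0}\lambda(j)^2\,\delta_{\lambda(j)^2}\quad\text{on }[0,1],
\]
noting that $\int x^{m-1}\,d\nu_\lambda=p_m(\lambda)/\|\lambda\|_2^2$. Once I establish $\nu_\lambda=\nu_{\lambda'}$, the multiset $\{\lambda(j)^2\}_j$ (and by non-negativity the multiset $\{\lambda(j)\}_j$) is recovered from the atoms of $\nu_\lambda$ together with their masses, giving $[\lambda]=[\lambda']$.

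\textbf{Main obstacle.} The hardest step will be justifying $\nu_\lambda=\nu_{\lambda'}$ from coincidence of moments \emph{only} along $M$ rather than all of $\N$. Since $V$ is non-Gaussian, Marcinkiewicz's theorem guarantees $\log\phi$ is not a polynomial, so $M$ is infinite; the cleanest resolution is to verify, by direct computation from the explicit series \eqref{densityofV}, that $\kappa_{2m}\neq 0$ for every $m\geq 1$, whence $M=\N$ and Hausdorff's moment-uniqueness theorem closes the argument. Failing that, a Müntz--Szász density condition $\sum_{m\in M}m^{-1}=\infty$ on the sparse set $M$ would also suffice to give density of $\{x^{m-1}\colon m\in M\}$ in $C([0,1])$; either strategy hinges on the analytic structure of the Sinai--Kesten cumulant generating function.
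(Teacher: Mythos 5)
Your ``$\Leftarrow$'' direction is essentially the paper's argument (permutation invariance of an i.i.d.\ sequence); the paper handles the potential mismatch between infinite zero-sets by a truncation-and-limit step, whereas your single global bijection $\pi\colon\N_0\to\N_0$ need not exist when $\lambda$ and $\lambda'$ have different (finite) numbers of zero entries, but this is a cosmetic issue since the zero coordinates contribute nothing to $V^{\otimes\lambda}$.

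The ``$\Rightarrow$'' direction is where the proposal falls short. Your cumulant route only yields $\sum_j\lambda(j)^{2m}=\sum_j\lambda'(j)^{2m}$ for those $m$ with $\kappa_{2m}(V)\neq 0$, and you explicitly defer the crucial step of showing that this set of exponents is all of $\N$ (or at least Müntz--Szász dense). Marcinkiewicz only guarantees the set is infinite, which is not enough for Müntz (e.g.\ $\{2^k\}$ is infinite with $\sum 2^{-k}<\infty$), and you give no computation of the cumulants from \eqref{densityofV}. So as written there is a genuine gap: the proof is conditional on an unverified analytic fact about the Sinai--Kesten law. The paper avoids this issue entirely by exploiting a cruder but more robust feature of the \emph{moment generating function}: by \eqref{densityofV} the MGF $f(t)=\mc{L}_{\sigma_V^{-1}V}(t)$ blows up as $t\uparrow\tfrac18\pi^2\sigma_V$, so $t\mapsto\mc{L}_{V^{\otimes\lambda}}(t)=\prod_j f(\lambda(j)t)$ has radius of convergence exactly $\tfrac18\pi^2\sigma_V/\lambda^\downarrow(0)$. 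This immediately pins down the largest entry, and then one peels off the common leading coordinates using independence (dividing the Laplace transforms) and repeats, reducing the general case $j_0>0$ to the case $j_0=0$. If you want to salvage your characteristic-function/cumulant framework, the fix is precisely to read off $\|\lambda\|_\infty$ from the location of the nearest singularity of $\log\phi$ (or of $\mc{L}$) rather than from Taylor coefficients, which sidesteps the nonvanishing-of-cumulants question altogether.
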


Define by $\lambda^{0\downarrow}$ the vector obtained from $\lambda$ by putting $\lambda(0)$ as the first entry and reordering the other entries in decreasing order. This notation is needed in order to isolate the boundary increment. In what follows, $(n_i)_{i\in\N_0}$ denotes a strictly increasing sequence of integers with $n_0=0$.

%%%%%%%%%%%%%%%%%%%%%%%%%%%%%%%%%%%%%%%%
\begin{theorem}[{\bf Limit distributions in the Sinai regime}] \label{thm:rlpts} 
\text{}\\
%recurrent limit points
Let $\alpha$ be recurrent with $\sigma_0\in (0,\infty)$ and let $\tau$ be a cooling map. Under the annealed measure $\bb{P}$,  the sequence of centred random variables $\prt{\mathfrak{X}_n - \bb{E}[\mathfrak{X}_n]}_{n\in\N_0}$ is tight and its limit points are characterised as follows. If $(n_i)_{i\in\N_0}$ is such that 
\begin{equation}
\lim_{i\to\infty} \lambda^{0\downarrow}_{\tau,n_i}(k) =: \lambda_*(k) \qquad \forall\,k \in \N_0, 
\end{equation}
then
\begin{equation}\label{lpoints0}
\mathfrak{X}_{n_i} - \bb{E}[\mathfrak{X}_{n_i}]
\todp
V^{\otimes \lambda_*} 
+ a(\lambda_*)\,\Phi \qquad \forall\,p>0,
\end{equation}
where $a(\lambda_*) := (1 - \|\lambda_*\|_2^2)^{\tfrac12}$, $\Phi$ is a standard normal random variable, and $V^{\otimes \lambda_*} $ is as in \eqref{e:KSmix}.
\end{theorem}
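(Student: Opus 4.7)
The starting point is the independent-sum representation \eqref{e:c3}. Writing $W_k^{(n)} := (Y_k - \bb{E}[Y_k])/\sqrt{\var(Y_k)}$, we have
\[
\mathfrak{X}_n - \bb{E}[\mathfrak{X}_n] = \sum_{k=0}^{\ell(n)-1} \lambda_{\tau,n}(k)\, W_k^{(n)},
\]
a sum of independent, centred, unit-variance summands whose laws depend only on $T_k$ (and on $\bar T^n$ for $k=0$). The technical core of the proof is a uniform $L^p$ bound $\sup_{k,n}\bb{E}[|W_k^{(n)}|^p] < \infty$ for every $p>0$. When $T_k$ is large this follows from \eqref{sinaip}, since both $\bb{E}[|Y_k|^p]^{1/p}$ and $\sqrt{\var(Y_k)}$ are of order $\log^2 T_k$ (and $\bb{E}[Y_k] = o(\log^2 T_k)$ by $L^1$-convergence with $\bb{E}[V]=0$); when $T_k$ is bounded by a fixed threshold it follows from $|Y_k| \le T_k$ together with a uniform ellipticity lower bound on $\var(Y_k)$. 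Rosenthal's inequality combined with $\|\lambda_{\tau,n}\|_2 = 1$ then yields $\sup_n \bb{E}[|\mathfrak{X}_n - \bb{E}[\mathfrak{X}_n]|^p] < \infty$, settling both distributional tightness and $L^p$-boundedness.

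Fix a subsequence $(n_i)$ along which $\lambda^{0\downarrow}_{\tau,n_i}(j) \to \lambda_*(j)$ for every $j \in \N_0$, and let $k_j^{(n)}$ denote the index realising the $j$-th entry of $\lambda^{0\downarrow}_{\tau,n}$, with the convention $k_0^{(n)} = 0$. Whenever $\lambda_*(j) > 0$, the weight $\lambda_{\tau,n_i}(k_j^{(n_i)})$ is eventually bounded below, and together with $\var(X_{n_i}) \to \infty$ (which always holds since $T_k \ge 1$ and $\ell(n)\to\infty$) this forces $\var(Y_{k_j^{(n_i)}}) \to \infty$ and hence $T_{k_j^{(n_i)}} \to \infty$. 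Invoking \eqref{sinaip} and centring/normalising yields $W_{k_j^{(n_i)}}^{(n_i)} \todp \sigma_V^{-1} V_j$ as $i \to \infty$, where the $V_j$ are i.i.d.\ Sinai--Kesten random variables (independence across $j$ coming from distinct environment pieces). Hence, for each fixed $K \in \N$, the truncated sum
\[
A_{K,n_i} := \sum_{j=0}^{K-1} \lambda^{0\downarrow}_{\tau,n_i}(j)\, W_{k_j^{(n_i)}}^{(n_i)} \;\todp\; A_K := \sum_{j=0}^{K-1} \lambda_*(j)\, \sigma_V^{-1} V_j.
\]

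Set $B_{K,n_i} := \mathfrak{X}_{n_i} - \bb{E}[\mathfrak{X}_{n_i}] - A_{K,n_i}$, an independent sum over the complementary indices. Its weights are bounded above by $\lambda^{0\downarrow}_{\tau,n_i}(K) \to \lambda_*(K)$, which can be made arbitrarily small by taking $K$ large, and the sum of its squared weights equals $1 - \sum_{j<K}(\lambda^{0\downarrow}_{\tau,n_i}(j))^2 \to b_K^2 := 1 - \sum_{j<K}\lambda_*(j)^2$. The uniform $L^p$ bound from the first paragraph implies the Lindeberg condition, so Lindeberg--Feller gives $B_{K,n_i} \tod b_K\, \Phi$, upgraded to $L^p$-convergence by uniform integrability of the $p$-th powers. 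Since $A_{K,n_i}$ and $B_{K,n_i}$ are independent by construction, and since $A_K \to V^{\otimes \lambda_*}$ in $L^2$ (using $\sum_j \lambda_*(j)^2 \le 1$ together with the i.i.d.\ unit-variance structure of $\sigma_V^{-1}V_j$) with $b_K \to a(\lambda_*)$ as $K \to \infty$, an interchange of limits (first $i \to \infty$ at fixed $K$, then $K \to \infty$, justified by the exact identity $\mathfrak{X}_{n_i} - \bb{E}[\mathfrak{X}_{n_i}] = A_{K,n_i} + B_{K,n_i}$ and the $L^p$-tightness) delivers
\[
\mathfrak{X}_{n_i} - \bb{E}[\mathfrak{X}_{n_i}] \;\todp\; V^{\otimes \lambda_*} + a(\lambda_*)\, \Phi,
\]
with the two summands independent.

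The most delicate step is the Lindeberg argument on $B_{K,n_i}$: the laws of the summands $W_k^{(n)}$ vary essentially arbitrarily with $T_k$ (near-discrete for small $T_k$, Sinai--Kesten--like for large $T_k$), so one cannot appeal to any single distributional limit. It is precisely the uniform $L^p$ moment bound of the first paragraph---combined with the vanishing of the maximum weight on the residual indices---that renders the individual distributions irrelevant and makes the CLT succeed.
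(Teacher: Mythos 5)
Your overall strategy---uniform moment bounds for tightness, sorting by weight, isolating the top-$K$ weights into $A_{K,n_i}$ and treating the residual $B_{K,n_i}$ by a CLT-type argument, then interchanging the $K\to\infty$ and $i\to\infty$ limits---is the same as the paper's, and most of the intermediate claims are sound. In particular, the uniform $L^p$ bound on $W_k^{(n)}$, the observation that $\lambda_*(j)>0$ forces $T_{k_j^{(n_i)}}\to\infty$ and hence $W_{k_j^{(n_i)}}^{(n_i)}\todp\sigma_V^{-1}V_j$, and the interchange of limits at the end (justified by a triangle inequality for bounded Lipschitz test functions together with the exact decomposition) are all correct and match what the paper does.

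There is, however, a genuine gap in the treatment of $B_{K,n_i}$. You assert that for each fixed $K$ the Lindeberg condition holds and thus Lindeberg--Feller gives $B_{K,n_i}\tod b_K\Phi$. This is false whenever $\lambda_*(K)>0$, which happens in the central cases of the theorem (e.g.\ the double-exponential cooling example, where $\lambda_*(j)^2=q^2(1-q^2)^{j-1}>0$ for every $j$). To see the failure concretely: the residual contains the index $k_K^{(n_i)}$ with weight $\lambda^{0\downarrow}_{\tau,n_i}(K)\to\lambda_*(K)>0$, and by your own argument $W_{k_K^{(n_i)}}^{(n_i)}\todp\sigma_V^{-1}V_K$, which is not Gaussian; so $B_{K,n_i}$ contains, in the limit $i\to\infty$, a non-Gaussian summand of strictly positive weight and cannot converge to $b_K\Phi$. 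Equivalently, the Lindeberg sum $\sum_{j\geq K}\lambda(j)^2\,\bb{E}[W^2\Ind{\{|\lambda W|>\gep\}}]$ is bounded above by $C\gep^{-(p-2)}\lambda^{0\downarrow}_{\tau,n_i}(K)^{p-2}$, which tends to $C\gep^{-(p-2)}\lambda_*(K)^{p-2}>0$ rather than to zero, so the hypothesis of the limiting Lindeberg--Feller theorem is simply not met at fixed $K$.

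What is needed is a \emph{quantitative} (non-asymptotic) Lindeberg estimate: a bound on $|\bb{E}[f(B_{K,n_i})]-\bb{E}[f(b_{K,n_i}\Phi)]|$ that is uniform in $i$ and of order $\sup_{j\geq K}\lambda^{0\downarrow}_{\tau,n_i}(j)$ (or a power thereof), rather than an appeal to the asymptotic theorem. This is exactly what the paper's proof supplies: it replaces the residual summands one at a time by Gaussians (the interpolants $W_{K,n}(M)$) and controls each swap by a third-order Taylor expansion, yielding a total error $\leq C\sup_{M>K}\lambda_n(M)\sum_{M>K}\lambda_n^2(M)\leq C/\sqrt K$ uniformly in $n$ (see~\eqref{end}--\eqref{Unifbound2}). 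With that bound in place the rest of your interchange of limits goes through. The other structural difference from the paper---that you work directly with the increments $W_k^{(n)}$ while the paper first couples them to exact Sinai--Kesten variables plus an $L^2$-small error $R^{(k)}_n$ and then analyses the abstract mixture $V^{\otimes\lambda_n}$---is mostly a matter of bookkeeping; the paper's split by $T_k\leq J$ vs.\ $T_k>J$ serves to control that coupling error and is a step your write-up avoids by arguing with the $W_k^{(n)}$ themselves, which is fine once the quantitative Lindeberg bound is inserted.
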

%%%%%%%%%%%%%%%%%%%%%%%%%%%%%%

It is possible to distinguish between the different scaling limits by looking at the asymptotic behavior of $(\lambda_{\tau,\tau(k)}(k))_{k\in\N_0}$, the sequence of relative weights of the refreshed increments.

%%%%%%%%%%%%%%%%%%%%%%%%%%%%%%
\begin{corollary}[{\bf Limit distributions for regular cooling maps}]  \label{cor:rRecThree}
\text{}\\ %\vspace{-.2cm}
%%%%%%%%%%
For any $p>0$, under the annealed measure $\bb{P}$
\begin{itemize}
\item[{\rm (a)}] $\mathfrak{X}_n -\bb{E}\crt{\mathfrak{X}_n}$ converges in $L^p$  if and only if $\lambda_{\tau,\tau(k)}(k) \to 0$, in which case
\begin{equation}\label{conv0a}
\mathfrak{X}_n -\bb{E}\crt{\mathfrak{X}_n} \todp \Phi.
\end{equation}
\item[{\rm(b)}] If $\lambda_{\tau,\tau(k)}(k) \to q \in(0,1]$, then 
\begin{equation}\label{conv0b}
\mathfrak{X}_{\tau(k)} \todp V^{\otimes \lambda_q},
\end{equation}
where
%%
%\begin{equation} \label{e:qmc} %q mixture coefficients
$\lambda_q(0) := 0$, and for $j \in \N$, $ \lambda_q^2(j) := q^2 (1 - q^2)^{j-1}$. 
%\end{equation}
%%
Moreover, if for a subsequence $(n_i)_{i\in\N_0}$ the limit
%%
%\begin{equation} \label{e:bqmc}
$w := \lim_{i\to\infty} \lambda_{\tau,n_i}(0)$
%\end{equation}
%%
exists, then
\begin{equation}\label{bconver0b}
\mathfrak{X}_{n_i}\todp w\,\sigma_V^{-1}V_0
+ (1 - w^2)^{\frac12} V^{\otimes \lambda_q}.
\end{equation}
\end{itemize}
%%%%%%%%%%
\end{corollary}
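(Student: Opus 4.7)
The corollary follows from Theorem~\ref{thm:rlpts} applied along suitable subsequences; the work lies in analysing the asymptotic shape of the weight vector $\lambda^{0\downarrow}_{\tau,n}$ under each of the two scalar assumptions on $\lambda_{\tau,\tau(k)}(k)$, and in showing that the centering $\bb{E}[\mathfrak{X}_n]$ can be absorbed. The fundamental ingredient is the monotonicity $\lambda_{\tau,n}(k) \le \lambda_{\tau,\tau(k)}(k)$ for all $n \ge \tau(k)$ and $k \ge 1$, which follows from $\var(X_n) \ge \var(X_{\tau(k)}) = \sum_{j=1}^k \var(Y_j)$; together with the Sinai variance scaling $\var(Z_t) \sim \sigma_V^2 \sigma_0^4 \log^4 t$ used to handle the boundary entry $\lambda_{\tau,n}(0)$, this controls every entry of $\lambda_{\tau,n}$ by the single scalar $\lambda_{\tau,\tau(k)}(k)$.

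For part (a), the ``if'' direction is immediate: the monotonicity above forces $\lambda^{0\downarrow}_{\tau,n_i}(k) \to 0$ for every $k$ along every subsequence, so Theorem~\ref{thm:rlpts} with $\lambda_* = 0$ (hence $a(\lambda_*) = 1$) yields $\mathfrak{X}_n - \bb{E}[\mathfrak{X}_n] \todp \Phi$. For the ``only if'' direction, if $\lambda_{\tau,\tau(k)}(k) \not\to 0$ I would extract a subsequence $(k_j)$ with $\lambda_{\tau,\tau(k_j)}(k_j) \to q \in (0,1]$ and choose intermediate times $n_j := \tau(k_j - 1) + t_j$ with $\log^4 t_j \sim \beta \log^4 T_{k_j}$ for some $\beta \in (0, 1)$; a direct variance computation gives $\lambda_{\tau,n_j}(0)^2 \to \beta q^2/(1-q^2+\beta q^2) =: w^2$ with $w \in (0, q)$, and the mixed statement of part (b), proved below, produces a limit law along $(n_j)$ differing from the one along $(\tau(k_j))$ by Lemma~\ref{SKlemma}, contradicting $L^p$ convergence.

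For part (b), setting $q_k := \lambda_{\tau,\tau(k)}(k)$ so that $\var(X_{\tau(k-1)}) = (1-q_k^2)\var(X_{\tau(k)})$, a telescoping argument gives $\var(Y_{k-j+1})/\var(X_{\tau(k)}) \to q^2(1-q^2)^{j-1}$ for each fixed $j \ge 1$. The assumption $q > 0$ forces $\var(X_{\tau(k)})$ to grow geometrically, so $\var(Y_i)/\var(X_{\tau(k)})$ decays geometrically in $k-i$, uniformly in $i$; choosing a threshold $\epsilon < q^2(1-q^2)^{J-1}$ yields that for $k$ large the top $J$ entries of $\lambda_{\tau,\tau(k)}$ are precisely its last $J$, hence $\lambda^{0\downarrow}_{\tau,\tau(k)} \to \lambda_q$. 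Since $\|\lambda_q\|_2^2 = 1$ the Gaussian piece vanishes, and Theorem~\ref{thm:rlpts} yields $\mathfrak{X}_{\tau(k)} - \bb{E}[\mathfrak{X}_{\tau(k)}] \todp V^{\otimes \lambda_q}$. The vanishing of the centering, $\bb{E}[\mathfrak{X}_{\tau(k)}] \to 0$, follows from $|\bb{E}[\mathfrak{X}_{\tau(k)}]| \le \sum_{j=1}^k \delta_j \lambda_{\tau,\tau(k)}(j)$ with $\delta_j := |\bb{E}[Y_j]|/\sqrt{\var(Y_j)} \to 0$ (since $\bb{E}[Z_n] = o(\log^2 n)$, a consequence of \eqref{sinaip}), together with the fact that the geometric decay of $\lambda_{\tau,\tau(k)}(\cdot)$ forces $\sum_j \lambda_{\tau,\tau(k)}(j)$ to be uniformly bounded in $k$, so that a split into a finite head and a small-$\delta_j$ tail concludes. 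For the mixed statement, the identity $\var(X_{\tau(m_i)}) = (1-\lambda_{\tau,n_i}(0)^2)\var(X_{n_i})$ with $m_i := \ell(n_i) - 1$ rescales each $\lambda_{\tau,n_i}(k)$, $k \ge 1$, by $\sqrt{1-w^2} + o(1)$, giving $\lambda^{0\downarrow}_{\tau,n_i} \to (w, \sqrt{1-w^2}\lambda_q(1), \sqrt{1-w^2}\lambda_q(2), \dots)$, which has unit $\ell_2$-norm; Theorem~\ref{thm:rlpts} together with the same centering argument then delivers the claimed limit.

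The principal obstacle is the rearrangement in part (b): the pointwise convergence $\lambda_{\tau,\tau(k)}(k-j+1)^2 \to q^2(1-q^2)^{j-1}$ identifies only the asymptotic values of the last $j$ coordinates, and for non-monotone $(T_k)$ one must rule out that some earlier coordinate $\lambda_{\tau,\tau(k)}(i)$ with $i \le k-J$ competes with them in the decreasing reordering. The geometric growth of $\var(X_{\tau(k)})$, which is itself forced by $q > 0$, provides precisely the uniform tail bound required, and the same geometric decay closes both the centering argument and the reverse direction of part (a).
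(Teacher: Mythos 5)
Your proposal follows the same overall plan as the paper: use the monotonicity $\lambda_{\tau,n}(k)\le\lambda_{\tau,\tau(k)}(k)$ plus Sinai variance scaling for the boundary to handle (a)-sufficiency, and the telescoping recursion for $q_k:=\lambda_{\tau,\tau(k)}(k)$ to identify $\lambda^{0\downarrow}_{\tau,\tau(k)}\to\lambda_q$ in (b). Your explicit treatment of the reordering obstacle (ruling out that some earlier coordinate competes in the decreasing rearrangement via the geometric growth of $\var(X_{\tau(k)})$) and of the vanishing of the centering $\bb{E}[\mathfrak{X}_{\tau(k)}]$ is a genuine and correct supplement: the paper's proof of (b) invokes Theorem~\ref{thm:rlpts} tersely without spelling out either point, and both do require an argument of the kind you give.

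There is, however, a gap in your proof of the ``only if'' direction of (a). You extract a subsequence $(k_j)$ with $q_{k_j}\to q\in(0,1]$, construct intermediate times $n_j$, and then appeal to ``the mixed statement of part~(b)'' to identify both limit laws. But part~(b) produces $\lambda^{0\downarrow}_{\tau,\tau(k)}\to\lambda_q$ only under the hypothesis that the \emph{full} sequence $q_k$ converges: the telescoping identity $\lambda^2_{\tau,\tau(k)}(k-j)=q^2_{k-j}\prod_{i=1}^{j}(1-q^2_{k-j+i})$ requires $q_{k-j}\to q$ for each fixed $j$, and having $q_{k_j}\to q$ along a subsequence controls only the single coordinate $k_j$, not its predecessors $k_j-1, k_j-2,\dots$. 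Thus the limit along $(\tau(k_j))$ need not be $V^{\otimes\lambda_q}$, and the mixed conclusion \eqref{bconver0b} is not available for $(n_j)$. In addition, your formula $w^2=\beta q^2/(1-q^2+\beta q^2)$ yields $w=q$ when $q=1$, so the claim $w\in(0,q)$ and hence the distinctness of the two laws fails at $q=1$. The paper avoids both issues by taking $q$ to be $\limsup_k\lambda_{\tau,\tau(k)}(k)$ (so that the coordinate at $k_i$ is asymptotically the largest and $\lambda_*(1)=q$), passing to a further subsequence so that both $\lambda^{0\downarrow}_{\tau,\tau(k_i)}$ and $\lambda^{0\downarrow}_{\tau,n_i}$ converge to \emph{some} vectors $\lambda_*$, $\lambda'_*$ (not necessarily $\lambda_q$), and then comparing $\sup_j\lambda_*(j)$ with $\sup_j\lambda'_*(j)$ directly through the variance inequality $\var(X_{\tau(\ell(n_i)-1)})/\var(X_{n_i})\le 1-c^2/4$; this sidesteps any need to identify the limits explicitly and handles $q=1$. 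Your construction of the intermediate times $n_j$ is a reasonable way to ensure $\lambda_{\tau,n_i}(0)$ is bounded away from $0$, but the contradiction should then be extracted via a direct comparison of the two limit vectors rather than through part~(b).
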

%%%%%%%%%%%%%%%%%%%%%%%%%%%%%%

\noindent
The proofs of Theorem~\ref{thm:rlpts} and Corollary~\ref{cor:rRecThree} are given in Section~\ref{sec:Fluc}. 
%Corollary~\ref{cor:rRecThree}(a) characterises convergence of the full sequence. 
%Convergence holds when the boundary increment can be neglected, and always leads to a standard Gaussian limit law. 
%Corollary~\ref{cor:rRecThree}(b) says that if the boundary increment cannot be neglected, then subsequential limits can still be characterised.  

%%%%%%%%%%%%%%%%%%%%%%%%%%%%%%%%%%%%%%%%
\paragraph{\bf Examples of subsequential limits}
\label{4examples}

We illustrate Corollary~\ref{cor:rRecThree} by considering examples of cooling maps that diverge at different rates. In examples {\bf (Ex.3)-- (Ex.6)} below all convergence statements are under the annealed measure $\bb{P}$. 
%%%%%
\begin{itemize}
\item[{\bf(Ex.3)}] \emph{Polynomial cooling }: If $k^{-\beta} T_k \to B$ for some $B,\beta \in (0,\infty)$,  then
\begin{equation} \label{poly*}
\frac{X_n - \bb{E}[X_n]}{\sigma_0^2n^{\frac{1}{2(\beta +1)}}\log^{2}n}
\todp
\prt{\frac{\beta}{\beta + 1}}^2 B^{-\frac{1}{2(\beta + 1)}}\sigma_V \Phi.
\end{equation}
\item[{\bf(Ex.4)}] \emph{Exponential cooling}: If $ k^{-1} \log T_k \to c \in (0,\infty)$, then 
\begin{equation} \label{e:i*}%
\frac{X_n}{\sigma_0^2\log^{\frac{5}{2}}n}  \todp \frac{1}{\sqrt{5c^5}}\,\sigma_V \Phi.
\end{equation}
\item[{\bf(Ex.5)}] \emph{Double exponential cooling}: If $k^{-1} \log\log T_k \to c \in (0,\infty)$, then
\begin{equation} \label{e:ii*}%
\frac{X_{\tau(\ell)}}{ \sigma_0^2\log^{2}\tau(\ell)} \todp q_c^{-1}  \sigma_VV^{\otimes\lambda_{q_c}} 
 \text{ with } q_c^2 = \frac{e^{4c}-1}{e^{4c}} \in (0,1). 
\end{equation}
\item[{\bf(Ex.6)}] \emph{Faster than double exponential cooling}: If $ k^{-1} \log\log T_k \to \infty$, then 
\begin{equation} \label{e:iii*} %
\frac{X_{\tau( \ell)}}{\sigma_0^2 \log^{2}\tau( \ell)} \todp V.
\end{equation}
\end{itemize}
%%%%%

In {\bf(Ex.5)} and {\bf(Ex.6)} we can even characterise \emph{all} the limit points. Indeed, if a subsequence $\prt{n_i}_{i\in\N_0}$ is such that %(recall \eqref{time_incr})
\begin{equation}\label{e:beg}
\lim_{i\to\infty} \frac{\log \bar{T}^{n_i} }{ \log \tau(\ell(n_i)-1)} =: b \in [0,\infty], % \text{ exists}.
\end{equation}
then
\begin{equation}\label{e:bcases}
\frac{X_{n_i}}{\sigma_0^2 \log^{2}n_i} \todp \begin{cases}
q_c^{-1} \sigma_VV^{\otimes \lambda_{q_c}} + b^2V_0, &\text{ if }b \leq 1,\\
b^{-2}q_c^{-1} \sigma_VV^{\otimes \lambda_{q_c}} + V_0, &\text{ if } b > 1,
\end{cases}
\end{equation}
with $b^{-1} = 0$ when $b = \infty$. 
%Note that $b$ controls the relative sizes of the running time at the boundary and the last resampling time.

The claims in {\bf (Ex.3)}--{\bf (Ex.6)} are proven in Section~\ref{ss:threeexamples}. 
%We conclude our analysis in the Sinai regime by noting a property that was conjectured in~\cite{AdH17}: as we increase the rate at which the cooling increments diverge, some sort of \emph{crossover} emerges in the order of the scaling and in the limit laws: from pure Gaussian to pure Sinai-Kesten through mixtures.        

%%%%%%%%%%%%%%%%%%%%%%%%%%%%%%%%%%%%%%%%%%%%%%%%%%%%%%%%%%%%
\subsubsection{Fluctuations in the Gaussian regime}
 
We next examine the scaling limit when $\alpha$ is $s$-transient with $s \in (2,\infty)$, i.e., when RWRE satisfies a classical CLT (recall Proposition~\ref{prop:SL_trans_RWRE}).

%%%%%%%%%%%%%%%%%%%%%%%%%%%%%%%%%%%%%%%%
\begin{theorem}[{\bf Scaling limit in the Gaussian regime}] \label{thm:TranGauss}
\text{}\\
Let $\alpha$ be $s$-transient with $s \in (2,\infty)$ and let $\tau$ be any cooling map. Then, under the annealed measure $\bb{P}$,
\begin{equation} \label{e:GFcis} %gaussian fluctuation centered term implicit scaling
\mathfrak{X_n} - \bb{E}\crt{\mathfrak{X_n}} \todtwo  \Phi.
\end{equation}
\end{theorem}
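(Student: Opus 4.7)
My starting point is the decomposition~\eqref{e:c3},
\[
\mathfrak{X}_n - \bb{E}[\mathfrak{X}_n] \,=\, \sum_{k=0}^{\ell(n)-1} \lambda_{\tau,n}(k)\,\tilde{Y}_k, \qquad \tilde{Y}_k := \frac{Y_k - \bb{E}[Y_k]}{\sqrt{\var(Y_k)}},
\]
which exhibits the centred, variance-normalised displacement as a sum of independent, centred, unit-variance summands with $\ell^2(\N_0)$-unit weights. Since $\var(\mathfrak{X}_n) \equiv 1$, the $L^2$ claim factors into two ingredients: (i)~convergence in distribution $\mathfrak{X}_n - \bb{E}[\mathfrak{X}_n] \tod \Phi$, and (ii)~uniform integrability of $\{(\mathfrak{X}_n - \bb{E}[\mathfrak{X}_n])^2\}_n$. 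Both will draw on two refined properties of RWRE that I expect to be supplied by Section~\ref{RWREres}: (a)~a uniform two-sided bound $\var(Z_m) \asymp m$ for $m \geq 1$ (hence $\lambda_{\tau,n}(k)^2 \asymp T_k/n$); and (b)~a uniform $(2+\delta)$-moment bound $\bb{E}\crt{|\tilde{Y}_k|^{2+\delta}} \leq C$ for some $\delta \in (0,s-2)$, valid for \emph{every} block length $T_k \geq 1$.

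For~(i) I would work with $\log \phi_n(t) = \sum_k \log \phi_k(\lambda_{\tau,n}(k)\,t)$, where $\phi_k$ is the characteristic function of $\tilde{Y}_k$. Classical Lindeberg--Feller does \emph{not} apply directly, since $\max_k \lambda_{\tau,n}(k)$ need not tend to $0$: a single block can absorb a positive fraction of $\var(X_n)$. I therefore split the blocks via a small parameter $\eta > 0$ into the heavy set $S_n := \{k : \lambda_{\tau,n}(k) > \eta\}$ (of cardinality at most $\eta^{-2}$) and its complement. On light blocks, (b) yields the Taylor estimate $|\log \phi_k(u) + u^2/2| \leq C|u|^{2+\delta}$ for small $|u|$, whose sum contributes at most $C|t|^{2+\delta}\eta^\delta$ to the error, using $\sum_k \lambda_{\tau,n}(k)^{2+\delta} \leq \eta^\delta$. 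On heavy blocks, (a) forces $T_k \geq c\eta^2 n \to \infty$ uniformly over $k \in S_n$, so Proposition~\ref{prop:SL_trans_RWRE} gives $\phi_k(u) \to e^{-u^2/2}$ uniformly on compact $u$-sets; since $|S_n|$ stays bounded, this piece equals $-\tfrac{t^2}{2}\sum_{k\in S_n}\lambda_{\tau,n}(k)^2 + o_n(1)$. Combining and using $\|\lambda_{\tau,n}\|_2 = 1$, first letting $n\to\infty$ and then $\eta \downarrow 0$, one obtains $\phi_n(t) \to e^{-t^2/2}$ for every $t \in \R$.

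For~(ii) I would invoke Rosenthal's inequality for independent centred sums: combined with (b) and the elementary estimate $\sum_k \lambda_{\tau,n}(k)^{2+\delta} \leq \max_k \lambda_{\tau,n}(k)^\delta \leq 1$, it yields $\sup_n \bb{E}\crt{|\mathfrak{X}_n - \bb{E}[\mathfrak{X}_n]|^{2+\delta}} < \infty$, hence the uniform integrability needed to promote~(i) to the $L^2$ convergence asserted in~\eqref{e:GFcis}. The main obstacle I foresee is precisely the mixed regime addressed in the previous paragraph -- neither ``pure Lindeberg'' nor ``a single dominant block'' suffices, so the heavy/light split is essential. The genuine technical load lies in ingredient (b): establishing a uniform $(2+\delta)$-th moment bound for $\tilde{Y}_k$ down to $T_k = 1$ requires sharpening the Kesten--Kozlov--Spitzer estimates in the $s$-transient regime, and I would expect this to be carried out in Appendices~\ref{appA}--\ref{appC}.
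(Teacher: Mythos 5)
Your argument is correct and reaches the same conclusion by a genuinely different route. The paper's proof proceeds via a \emph{coupling}: condition~(B1)–(B4) in Section~\ref{s:Gauss2} build, for each block, a Gaussian $\Phi_k$ plus an error $\mc{R}^{(k)}_{T_k}$ with $\sup_k\mc{E}[(\mc{R}^{(k)}_{n})^2]\to 0$, so that $\mathfrak{X}_n-\bb{E}[\mathfrak{X}_n]\eqd\Phi+\sum_k\lambda_{\tau,n}(k)\mc{R}^{(k)}_{T_k}$ exactly (the Gaussian part is produced by the reproductive property); a truncation on block \emph{size} $T_k\lessgtr J$ then separates blocks whose error is uniformly small from bounded blocks for which the weights $\lambda_{\tau,n}(k)$ vanish and classical CLT applies. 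You instead work with characteristic functions and split on the \emph{weights} $\lambda_{\tau,n}(k)\lessgtr\eta$; since $\lambda_{\tau,n}(k)^2\asymp T_k/n$, this is essentially a dual of the paper's cut, and the two routes confront the same obstacle — Lindeberg–Feller fails because a single block can carry a positive fraction of $\var(X_n)$ — but you fix it at the level of $\log\phi_k$ rather than at the level of the coupling error. The paper's method has the advantage of being the same machinery already deployed in Section~\ref{sec:pTSKlp} for the Sinai regime, so the Gaussian case is almost a corollary; your method is more self-contained and makes the mechanism (bounded-cardinality heavy set, $\ell_{2+\delta}$-smallness of light weights) very explicit. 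Both ultimately rest on Theorem~\ref{thm:RWREextra1}: the paper uses it through $\sigma_s^2(n)\to\sigma_s^2$ and the $L^2$-smallness of $\mc{R}^{(k)}_n$; you use it to obtain your ingredients (a)~$\var(Z_m)\asymp m$ uniformly in $m$ and (b)~the uniform $(2+\delta)$-th moment bound on $\tilde{Y}_k$ — both do follow from the $L^p$ convergence of Theorem~\ref{thm:RWREextra1} (with $2<p<s$) plus the elementary observation that $\var(Z_m)$ is bounded above and below on any finite range of $m$ by ellipticity and non-degeneracy, and from $|\bb{E}[Z_n]-v_\mu n|=O(\sqrt n)$ to pass from centering at $v_\mu n$ to centering at $\bb{E}[Z_n]$. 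Your closing $L^2$ step via Rosenthal's inequality is a clean replacement of the paper's uniform-integrability argument in \eqref{sunifint}.
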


\noindent
Theorem~\ref{thm:TranGauss} says that in the Gaussian regime also RWCRE converges to a Gaussian. 
%This is not surprising, because if RWRE is in a regime where homogenisation is dominant, then this dominance will be even further amplified by the presence of the refreshing times, which weakens the space-time dependence.
 However, the scaling of the variance as a function of the cooling map is subtle, as we show next.

\begin{corollary}[{\bf Gaussian limits and stability of the variance}] 
\label{Gaussregular}
\text{}\\
Fix $s \in (2,\infty)$. The sequence %$(n^{-1/2}(X_n - \bb{E}[X_n]))_{n\in\N_0}$
$\left((X_n - \bb{E}[X_n])/\sqrt{n}\,\right)_{n\in\N_0}$ is tight and its limit points correspond to the limit points of 
%%
%\begin{equation}\label{sfactor}
$\sigma^2_{s,\tau}(n) := \var\prt{X_n}/n$.
%\end{equation}
%%
Namely, given a subsequence $(n_i)_{i\in\N_0}$, if $\sigma_{s,\tau}(n_i) \to \sigma$, then, under the annealed \mbox{measure} $\bb{P}$,
\begin{equation}\label{gaussgeneral}
\frac{X_{n_i}-\bb{E}[X_{n_i}]}{\sigma \sqrt{n_i}} \todp \Phi.   
\end{equation}
Moreover, if $T_k \to \infty$, then 
\begin{equation}\label{gausscooling}
\sigma_{s,\tau}(n) \to \sigma_s,
\end{equation}
with $\sigma_s$ the standard deviation from Proposition~{ \ref{prop:SL_trans_RWRE}}.
\end{corollary}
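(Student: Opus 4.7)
The plan is to reduce everything to an analysis of the deterministic scalar sequence $\sigma_{s,\tau}(n)$. By the definitions in~\eqref{e:sca} one has the identity
\begin{equation*}
\frac{X_n - \bb{E}\crt{X_n}}{\sqrt{n}} \;=\; \sigma_{s,\tau}(n)\,\prt{\mathfrak{X}_n - \bb{E}\crt{\mathfrak{X}_n}},
\end{equation*}
which by Theorem~\ref{thm:TranGauss} expresses the left-hand side as the product of a real sequence with a random sequence that already converges to $\Phi$ in $L^2$. Consequently, both the tightness and the subsequential characterisation~\eqref{gaussgeneral} follow once $(\sigma_{s,\tau}(n))_{n\in \N_0}$ is shown to be bounded, and~\eqref{gausscooling} reduces to proving $\sigma_{s,\tau}^2(n) \to \sigma_s^2$ under the assumption $T_k \to \infty$.

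To bound $\sigma_{s,\tau}^2(n)$ uniformly, I would exploit the decomposition~\eqref{X_dec} into independent pieces, which yields
\begin{equation*}
\var(X_n) \;=\; \sum_{k=0}^{\ell(n)-1} \var(Y_k),
\end{equation*}
with $Y_k \eqd Z_{T_k}$ in an independent environment. The refined variance estimate for $s$-transient uniformly elliptic RWRE stated in Section~\ref{RWREres} supplies a constant $C=C(\alpha)<\infty$ such that $\var(Z_m) \leq Cm$ for every $m\geq 1$. Together with $\sum_k T_k = n$ from~\eqref{time_incr*}, this gives $\sigma_{s,\tau}^2(n) \leq C$ for all $n$. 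Tightness of $(X_n-\bb{E}\crt{X_n})/\sqrt{n}$ then follows from the displayed identity, and along any subsequence $(n_i)$ with $\sigma_{s,\tau}(n_i) \to \sigma>0$ we obtain $(X_{n_i}-\bb{E}\crt{X_{n_i}})/(\sigma\sqrt{n_i}) \to \Phi$ in $L^2$, which is~\eqref{gaussgeneral}.

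For~\eqref{gausscooling}, I would invoke the stronger refined property $\var(Z_m)/m \to \sigma_s^2$ as $m \to \infty$, an $L^2$-strengthening of the distributional CLT of~\cite{KKS75}. Using once more the independent decomposition together with $\sum_k T_k = n$,
\begin{equation*}
\sigma_{s,\tau}^2(n) - \sigma_s^2 \;=\; \frac{1}{n}\sum_{k=0}^{\ell(n)-1} T_k \crt{\frac{\var(Y_k)}{T_k} - \sigma_s^2},
\end{equation*}
so the left-hand side is a $(T_k/n)$-weighted average of the bracketed deviations. Given $\varepsilon>0$, choose $N$ with $|\var(Z_m)/m - \sigma_s^2|<\varepsilon$ for all $m\geq N$; the previous uniform bound controls each individual term with $T_k<N$ by a constant $(C+\sigma_s^2)N$, and the hypothesis $T_k\to\infty$ guarantees that only finitely many such indices $k$ occur. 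Splitting the sum accordingly yields $|\sigma_{s,\tau}^2(n) - \sigma_s^2| \leq \varepsilon + O(1/n)$, whence $\sigma_{s,\tau}^2(n) \to \sigma_s^2$ after letting $n\to\infty$ and then $\varepsilon \downarrow 0$.

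The main obstacle lies outside the corollary itself: the entire argument rests on the refined RWRE facts $\var(Z_m)/m \to \sigma_s^2$ together with the companion uniform bound $\var(Z_m)\leq Cm$, neither of which follows from~\cite{KKS75} alone. These strengthenings are precisely what is stated in Section~\ref{RWREres} and established in the appendices; granted these inputs, the corollary is a short consequence of Theorem~\ref{thm:TranGauss}.
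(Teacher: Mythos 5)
Your proof is correct and follows essentially the same route as the paper: rescale to $\mathfrak{X}_n - \bb{E}[\mathfrak{X}_n]$, invoke Theorem~\ref{thm:TranGauss} for \eqref{gaussgeneral}, and use the RWRE variance convergence $\var(Z_m)/m \to \sigma_s^2$ (a consequence of Theorem~\ref{thm:RWREextra1} with $p=2$) together with $\sum_k T_k = n$ for \eqref{gausscooling}; the only difference is that the paper compresses your final split-the-sum argument into a citation to the Toeplitz lemma, while you also spell out the (straightforward) tightness step which the paper leaves implicit.
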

%%%%%%%%%%%%%%%%%%%%%%%%%%%%%%%%%%%%%%%%

We conclude our analysis of the Gaussian regime by looking into the centering term in \eqref{e:GFcis}.
\paragraph{Centering and correction in the law of large numbers}
\label{2examples}

%Interestingly, 
In general the centering term $\bb{E}\crt{X_n}$ in Theorem~\ref{thm:TranGauss} (recall \eqref{e:sca}) cannot be replaced by %$v(\alpha,\tau)\, n$ with $v(\alpha,\tau)$ 
the limiting speed of $X$. In {\bf(Ex.7)} below we provide a class of rapidly diverging cooling maps for which such a replacement causes no harm. In {\bf(Ex.8)} below we indicate that there exist slowly diverging cooling maps for which it does. 
\begin{itemize}
\item[{\bf(Ex.7)}] \emph{Stable centering for rapidly diverging cooling maps:} 
For $s \in (2,\infty)$, if $T_k \to \infty$ and
\begin{equation}\label{scltgrowth} 
\sup_{n\in \N_0} \sum_{k = 0}^{\ell(n)-1} \lambda_{\tau,n}(k) <\infty,
\end{equation}
then
\begin{equation}\label{zero}
\frac{\bb{E}\crt{X_n} - v_\mu\, n}{\sqrt{n}} \to 0 ,
\end{equation} 
with $v_\mu$ the RWRE speed in~\eqref{speed}, from which it follows via \eqref{e:GFcis} and \eqref{gausscooling} that
\begin{equation}\label{sclt} 
\frac{X_n - v_\mu n}{\sigma_s \sqrt{n}}\todtwo   \Phi.
\end{equation}
Moreover, \eqref{scltgrowth} holds when $\liminf_{k\to\infty} k^{-1}\log T_k>0$.

\item[{\bf(Ex.8)}] \emph{Counterexample with slowly diverging cooling maps:}
For any $s \in (2,\infty)$ there exist an $s$-transient $\alpha$ and a cooling map $\tau$ with $T_k \to \infty$, such that~\eqref{sclt} fails. In particular, there exist ``extreme'' examples for which
\begin{equation}\label{infinity}
\limsup_{n\to\infty}
\frac{\bb{E}\crt{X_n} - v_\mu\, n}{\sqrt{n}} = \infty.
\end{equation} 
In such cases, the sequence %$(n^{-1/2}(X_n - v_\mu n))_{n\in\N_0}$
$((X_n - \bb{E}[X_n])/\sqrt{n}\,)_{n\in\N_0}$ is not even tight (compare with Corollary~\ref{Gaussregular}).
\end{itemize}

\noindent
Condition~\eqref{scltgrowth} in {\bf(Ex.7)} imposes a growth condition on $T_k$.  {\bf(Ex.8)} shows that the convergence in~\eqref{sclt} may fail even when $T_k \to \infty$.

%It is worth clarifying that the existence of $v(\alpha,\tau)$ depends on whether a law of large numbers is in force. The latter was proven in~\cite{AdH17} for the case of bounded regular $(T_k)_{k\in\N}$, and in~\cite{ACdCdH18} under \eqref{truecooling}. Yet, such a law of large numbers can be established for more general cooling maps under mild regularity assumptions. Here we only mention that the strong law of large numbers in~\cite{ACdCdH18} can be extended to the case where the refreshing time increments Cesaro diverge, as in ~\eqref{Cesarocooling}. In this case, $v(\alpha,\tau)$ exists a.s.\ and is equal to $v_\mu$.

The proofs of Theorem~\ref{thm:TranGauss} and Corollary~\ref{Gaussregular} are given in Section~\ref{s:Gauss}.

%%%
\subsection{Auxiliary properties of RWRE} \label{RWREres}

In our analysis of RWCRE we need a few results about RWRE. 
% that have not been derived in the literature, yet they are interesting in their own right.  
The first states that in Proposition~\ref{prop:SL_trans_RWRE} the convergence can be extended to $L^p$ for $p<s$.

%%%%%%%%%%%%%%%%%%%%
\begin{theorem}[{\bf $L^p$-convergence in the Gaussian regime}] \label{thm:RWREextra1}
\text{}\\
Suppose that the assumptions in Proposition~{ \ref{prop:SL_trans_RWRE}} are in force. Then
\begin{equation}\label{Lpsnormal}
\frac{Z_n-v_\mu n}{\sigma_s\sqrt{n}} \todp \Phi \qquad \forall\, p < s.
\end{equation}
\end{theorem}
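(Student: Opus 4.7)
The plan is to upgrade the distributional convergence of Proposition~\ref{prop:SL_trans_RWRE} to $L^p$ convergence by establishing uniform integrability of $|W_n|^p$, where $W_n := (Z_n - v_\mu n)/(\sigma_s\sqrt{n})$. Fix $p < s$ and choose $q \in (p,s)$. By the de la Vall\'ee Poussin criterion, it suffices to prove the moment bound
\[
\sup_{n \in \N} E^\mu_0\!\left[|W_n|^q\right] < \infty,
\qquad \text{equivalently,} \qquad
E^\mu_0\!\left[|Z_n - v_\mu n|^q\right] = O(n^{q/2}).
\]
Combined with $W_n \tod \Phi$ from Proposition~\ref{prop:SL_trans_RWRE}, this bound yields convergence of all absolute $p$-th moments, which is exactly the $L^p$-convergence asserted in~\eqref{Lpsnormal}.

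For the moment bound I would invoke the standard regeneration structure for right-transient one-dimensional RWRE. Let $0 < \rho_1 < \rho_2 < \cdots$ be the successive regeneration times at which the walk reaches a new maximum from which it never returns below. Under the annealed measure $P^\mu_0$, the pairs $(D_k, L_k) := (Z_{\rho_k} - Z_{\rho_{k-1}},\ \rho_k - \rho_{k-1})$ are i.i.d.\ for $k \geq 2$, with $v_\mu = E^\mu_0[D_2]/E^\mu_0[L_2]$. Kesten's classical tail estimate in the $s$-transient regime (see~\cite{KKS75}) gives $P^\mu_0(L_2 > t) \leq C t^{-s}$, so that $L_2$ and $D_2$ have a finite $q$-th moment for every $q < s$, and in particular finite variance (since $q > 2$).

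With these inputs, I would write, for $N(n) := \max\{k \geq 0 : \rho_k \leq n\}$, the decomposition
\[
Z_n - v_\mu n = \sum_{k=2}^{N(n)} (D_k - v_\mu L_k) + R_n,
\]
where $R_n$ collects the boundary contributions from the first regeneration block and from the incomplete block $[\rho_{N(n)},n]$. Standard arguments using the tail bound on $L_2$ and on $\rho_1$ control $E^\mu_0|R_n|^q$ uniformly in $n$. The bulk sum has i.i.d.\ centered summands with finite $q$-th moment, so Rosenthal's inequality gives
\[
E^\mu_0\Bigl|\sum_{k=2}^{m}(D_k - v_\mu L_k)\Bigr|^q
\leq C_q\bigl(m^{q/2}\,\mathrm{Var}(D_2 - v_\mu L_2)^{q/2} + m\, E^\mu_0|D_2 - v_\mu L_2|^q\bigr) = O(m^{q/2}),
\]
because $q > 2$ makes the first term dominate. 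Combining with the concentration $N(n) = O(n)$, via a standard stratification over the events $\{N(n) \sim n/E^\mu_0[L_2]\}$ and its complement, yields $E^\mu_0|Z_n - v_\mu n|^q = O(n^{q/2})$, as required.

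The main obstacle is keeping the random index $N(n)$ and the boundary remainder $R_n$ under control without degrading the $n^{q/2}$ rate: one must dominate the overshoot $Z_n - Z_{\rho_{N(n)}}$ and handle the tail event $\{N(n) \gg n/E^\mu_0[L_2]\}$ by feeding the tail of $L_2$ back into a dyadic estimate. These are standard bookkeeping steps once the moment input $E^\mu_0[L_2^q] < \infty$ for every $q < s$ is available, and the substantive content of the argument is the $s$-tail estimate itself, which is the classical consequence of $s$-transience.
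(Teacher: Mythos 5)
Your argument is correct and reaches the result by a genuinely different route from the paper. Both you and the paper reduce $L^p$-convergence to proving a uniform moment bound $\sup_n E^\mu_0[|W_n|^q]<\infty$ for some $q\in(p,s)$, and then combine this uniform integrability with the distributional convergence of Proposition~\ref{prop:SL_trans_RWRE}. Where you differ is in how the moment bound is obtained. You exploit the annealed regeneration structure: decompose $Z_n - v_\mu n$ into i.i.d.\ centered block increments $(D_k - v_\mu L_k)_{k\geq 2}$ plus boundary corrections, feed in the classical tail estimate $P^\mu_0(L_2>t)\lesssim t^{-s}$ (so $E^\mu_0[L_2^q]<\infty$ for $q<s$), apply Rosenthal's inequality to the bulk, and control the random index $N(n)$ and boundary blocks by standard stratification. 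The paper instead works with Zeitouni's potential-based decomposition $Z_n - v_\mu n = M_n - S_n - R_n$ into a martingale part, a stationary sequence, and a remainder (Appendix~\ref{appA}), and establishes the uniform $L^q$-bound for each piece separately: a maximal $L^p$-inequality of Merlev\`ede--Peligrad--Utev for $R_n$, the Haeusler--Joos non-uniform Berry--Esseen bound for the martingale $M_n$, and Jirak's Berry--Esseen theorem under weak dependence for $S_n$. Your regeneration approach is arguably more direct and self-contained once the KKS tail input is granted, and it avoids the three separate Berry--Esseen/maximal-inequality imports; what it costs is the bookkeeping around the random block count $N(n)$ and the boundary blocks, which you correctly flag as needing care but which are indeed standard given the $s$-power tail of $L_2$. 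One small remark: the final step should be stated as ``uniform integrability of $|W_n|^p$ plus $W_n\tod\Phi$ yields $W_n\todp\Phi$ (via Skorokhod representation and Vitali)'', rather than merely ``convergence of absolute $p$-th moments'', but this is a phrasing issue and does not affect the correctness of your argument.
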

%%%%%%
\noindent The second result concerns various forms of oscillation of the mean of RWRE. %The construction of the cooling maps used in the examples given in Section~\ref{results} relies on the nature of these oscillations. 

%%%%%%%%%%%%%%%%%%%%
\begin{theorem}[{\bf Oscillations of the mean}] \label{thm:RWREextra2}
\text{}\\ \vspace{-.4cm}
%%%%%
\begin{itemize}
\item[{\rm(I)}] There is a recurrent $\alpha$ such that $E^\mu_{0}[Z_n] \neq 0$ for infinitely many $n\in\N$.
\item[{\rm(II)}] For every $s \in (2,\infty)$, there is an $s$-transient $\alpha$ such that $ E^\mu_{0}[Z_n] \neq v_\mu n$ for infinitely many $n\in\N$.
\item[{\rm(III)}] If $\alpha$ is recurrent with $\sigma_0 \in (0,\infty)$, then for every $0<\gamma<\tfrac23$ there is a $C=C(\alpha,\gamma) \in (0,\infty)$ such that
%%% 
\begin{equation}\label{goal_expectation}
\left| E^\mu_{0}\left[\frac{Z_n}{\sigma_0^2\log^2n}\right]\right| \leq \frac{C}{\log^\gamma n}, 
\qquad n\in\N.
\end{equation}
\end{itemize}
\end{theorem}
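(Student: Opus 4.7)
\medskip
\emph{Proof plan.} The three parts are handled separately, but (I) and (II) share a common strategy.

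\medskip
\emph{(I) and (II): explicit counterexamples.}
We take $\alpha := a\,\delta_{p_1} + (1-a)\,\delta_{p_2}$ with distinct $p_1, p_2 \in (\mathfrak c, 1-\mathfrak c)$ and tune $a \in (0,1)$ so that $\langle \log \rho\rangle = 0$ for (I), respectively $\langle \rho^s\rangle = 1$ together with $\langle \log \rho\rangle < 0$ for (II). A generic such triple yields a non-symmetric $\alpha$ with $E^\mu_0[Z_1] = 2\langle \omega(0)\rangle - 1 \neq 0$ (resp.\ $E^\mu_0[Z_1]\neq v_\mu$). To upgrade ``nonzero for some $n$'' to ``nonzero for infinitely many $n$'', we argue by contradiction: if $E^\mu_0[Z_n] = 0$ (resp.\ $= v_\mu n$) for all $n\geq N$, then the one-step identity
$E^\mu_0[Z_{n+1}] - E^\mu_0[Z_n] = E^\mu_0[2\omega(Z_n) - 1]$
forces $E^\mu_0[\omega(Z_n)] = \tfrac12$ (resp.\ $= \tfrac{1+v_\mu}{2}$) for all $n\geq N$. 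Expanding this in terms of the joint law of $(Z_n,\omega(Z_n))$ yields an infinite sequence of polynomial identities in $(p_1,p_2,a)$, which a generic triple fails to satisfy.

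\medskip
\emph{(III): Sinai--Kesten valley plus Brownian coupling.}
Introduce the log-potential $S_x := \sum_{i=1}^x \log\rho(i)$ (with symmetric extension to $x<0$), and let $m_n$ denote the location of Sinai's valley bottom at depth $\log n$.

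\emph{Step 1.} Couple $S$ with a two-sided Brownian motion $\sigma_0 B$ via the Koml\'os--Major--Tusn\'ady strong invariance principle, obtaining $\sup_{|x|\leq C\log^2 n} |S_x - \sigma_0 B_x| = O(\log n)$ off a negligible event.

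\emph{Step 2.} Let $m_n^B$ be the Brownian analogue of the valley bottom. By Brownian scaling $m_n^B/(\sigma_0^2\log^2 n) \eqd V$ exactly, and by the reflection symmetry of $B$ one has $E[m_n^B] = 0$. Since the typical slope of $B$ near its valley bottom is of order $1/\log n$, the KMT error transfers to a shift bound $E[|m_n - m_n^B|] = O(\log^{2-\gamma}n)$ for any $\gamma < 2/3$.

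\emph{Step 3.} Prove a quantitative localization bound $E^\mu_0[|Z_n - m_n|] = O(\log^{2-\gamma}n)$ by refining the argument behind \eqref{sinaip}, using concentration estimates on the local time profile of RWRE in the valley.

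\emph{Step 4.} Assemble via the triangle inequality:
\[
\bigl| E^\mu_0[Z_n] \bigr|
\leq E^\mu_0\bigl[|Z_n - m_n|\bigr] + E\bigl[|m_n - m_n^B|\bigr] + \bigl| E[m_n^B] \bigr|
= O\bigl(\log^{2-\gamma} n\bigr),
\]
and divide by $\sigma_0^2 \log^2 n$ to obtain \eqref{goal_expectation}.

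\medskip
The main obstacle is Step 3. The $L^p$-convergence \eqref{sinaip} is purely qualitative, whereas the argument requires an explicit polylogarithmic rate on $|Z_n - m_n|$ in $L^1$. This is precisely the sharpened fluctuation bound acknowledged from Z.~Shi, and the exponent $2/3$ emerges from balancing the KMT coupling error against the trapping scale around the valley bottom.
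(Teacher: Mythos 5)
Your plans for (I) and (II) aim in the right direction—genericity in a finite-parameter family—but they stop short of the mechanism that actually finishes the argument. You note the one-step recursion $E^\mu_0[Z_{n+1}]-E^\mu_0[Z_n]=E^\mu_0[2\omega(Z_n)-1]$ and then assert that a ``generic triple fails to satisfy'' the resulting infinite system; this is not a proof. The quantities $E^\mu_0[\omega(Z_n)]$ are annealed path functionals in which $Z_n$ and $\omega(Z_n)$ are correlated through $\omega$, so they are not manifestly polynomial in $(p_1,p_2,a)$, and even granting a suitable structure one still has to argue that the system is non-degenerate. The paper sidesteps this by taking a \emph{one-parameter} family $\alpha_x := x\,\delta_x + (1-x)\,\delta_{\eta(x)}$ where $\eta(x)$ is defined implicitly by $\langle \log\rho\rangle = 0$ (resp.\ $\langle\rho^s\rangle=1$), so that $x \mapsto E^{\mu_x}_0[Z_n]$ is a non-constant real-analytic function for each $n$ (non-constancy follows from examining the boundary behaviour as $x \to 1$ and $x\to 0$). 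Its zero set $A_n$ is therefore finite, the union $A=\bigcup_n A_n$ is countable, and any $x\in(0,1)\setminus A$ provides the desired $\alpha$. You need this countability step—``generic'' alone is not a substitute.

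For (III), your four-step outline correctly identifies the architecture of the proof: split $Z_n$ into the valley bottom plus the fluctuation about it, and compare the valley bottom to a Brownian analogue whose mean vanishes by symmetry. This is exactly what the paper does (with $\overline{b}^n$ playing the role of $m_n/\log^2 n$). However: (i) the paper uses a Skorokhod embedding rather than KMT, and your Step~2 error estimate is off—KMT over a time window of length $O(\log^2 n)$ gives an error of order $\log\log n$, not $\log n$; (ii) the heuristic ``the typical slope of $B$ near its valley bottom is $1/\log n$'' is not well posed—near its minimum, $B$ behaves like a three-dimensional Bessel process, so a vertical error $\varepsilon$ shifts the minimiser by $O(\varepsilon^2)$, and the exponent $2/3$ does \emph{not} come from balancing the strong-approximation error against the valley scale but from Bessel-process hitting-time estimates in a depth-$(1\pm\delta)$ refinement of the valley, with $\delta=\log^{-r}n$, $r<2/3$; (iii) most importantly, Step~3—the quantitative $L^1$ localisation $E^\mu_0[|Z_n-m_n|]=O(\log^{2-\gamma}n)$—is the real content of the theorem and is explicitly left open. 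The paper supplies it in the second half of Appendix~\ref{appC} by working on a ``good'' event $A_n^{J,\delta}$ and adapting Zeitouni's localisation argument (hitting-time and invariant-measure estimates) to keep explicit polylogarithmic rates. Without that step the plan does not close.
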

%%%%%%%%%%%%%%%%%%%%
 
\noindent
The proofs of Theorems~\ref{thm:RWREextra1}--\ref{thm:RWREextra2} are given in Appendices~\ref{appA}--\ref{appC}. The line of proof of Theorem~\ref{thm:RWREextra2}(III) was suggested by Zhan Shi.

%%%%%%%%%%%%%%%%%%%%%%%%%%%%%%%%%%%%%%%%%%%%%%%%%%%%%%%%%%%%
\subsection{Discussion and open problems} \label{discussion}
\text{}

\noindent
{\bf Ellipticity.}
The uniform ellipticity assumption in \eqref{uellcond} is needed in the proof of Theorem~\ref{thm:RWREextra2}(III) only. Once this would be extended, all our results would carry over. In the proof of Theorem~\ref{thm:RvT}(a) we need a concentration property for which it suffices to have a very mild form of ellipticity. In the proof of Theorem~\ref{thm:rlpts} and Corollary~\ref{cor:rRecThree} we use \eqref{sinaip}, which was proved in \cite{AdH17} under \eqref{uellcond} only, but should be true more generally.     

\medskip\noindent
{\bf Stability of recurrence and transience.}
While RWRE asymptotics are  non-local due to  space-time correlations, for  RWCRE, resampling  \emph{adds extra noise} and weakens space-time dependencies. From this perspective, we can view RWCRE as a \emph{perturbation} of RWRE. Theorem~\ref{thm:RvT} describes how this perturbation affects the recurrence versus transience criterion known for RWRE. Theorem~\ref{thm:RvT}(a) shows that transience is preserved as soon as the increments of the refreshing times diverge, while Theorem~\ref{thm:RvT}(b) says that the situation is more delicate for recurrence, unless $\alpha$ is symmetric. In fact, as shown in {\bf(Ex.2)}, for non-symmetric $\alpha$, resampling is capable of destroying recurrence. We will see in Section~\ref{s:rxt} that this happens because there are increments of the refreshing times during which the average displacement of RWRE is strictly positive. By repeating such increments often enough, we are able to pull the random walk away from the origin. The increments of the refreshing times in such cooling maps are diverging, but \emph{slowly enough} so that RWCRE is qualitatively different from RWRE. As shown in {\bf (Ex.1)}, cooling can even turn right-transience into left-transience. 

\medskip\noindent
{\bf Mixed fluctuations in Sinai regime.}
It is well-known that trapping phenomena are predominant when RWRE is recurrent (see Proposition~\ref{prop:Rec}). The underlying correlation structure gives rise  to subdiffusive scaling with a non-Gaussian limit law. Theorem~\ref{thm:rlpts} and Corollary~\ref{cor:rRecThree} show that this scenario is affected by the extra noise introduced by the cooling. %in a sensitive way that depends on how frequent the refreshing times occur. 
Indeed %, for all cooling maps the fluctuations live on a scale larger than those of RWRE 
 RWCRE is less localised, although convergence in distribution of the full sequence is not guaranteed in general. Theorem~\ref{thm:rlpts} shows that regular subsequential limits are characterised by mixtures of Gaussian laws and properly weighted Sinai-Kesten laws. Corollary~\ref{cor:rRecThree}(a) provides a necessary and sufficient condition, in terms of  the boundary term in the sum in \eqref{X_dec}, under which all subsequential limits coincide, in which case a standard Gaussian law emerges after a  scaling that is gauged by the divergence in the cooling map. Corollary~\ref{cor:rRecThree}(b), instead, says that if the boundary increment is not negligible, then the full sequence oscillates, and properly chosen subsequences lead to different mixed limit laws, as illustrated by {\bf (Ex.3)}--{\bf (Ex.6)}. These subsequences can be further characterised depending on whether the boundary term dominates or competes with the other terms, as illustrated in~\eqref{e:bcases}. Such results yield the answer to a conjecture put forward in~\cite{AdH17}, where the analysis of the fluctuations in the Sinai regime was carried out for cooling maps for which Lindeberg-Feller type conditions are satisfied, essentially corresponding to the condition in Corollary~\ref{cor:rRecThree}(a). 

\medskip\noindent
{\bf CLT in the Gaussian regime and centering issues.}
RWCRE can be seen as an interpolation between RWRE and a homogeneous random walk. Thus, not surprisingly, Theorem~\ref{thm:TranGauss} shows that if RWRE satisfies a CLT (i.e., when $s\in(2,\infty)$), then the same is true for RWCRE. Yet, as is clear from Corollary~\ref{Gaussregular}, the cooling can make the variance oscillate on scale $n$, but not under \eqref{truecooling}. {\bf(Ex.7)} and {\bf(Ex.8)} shown that,  if the cooling map is ``sufficiently concentrated" as captured in condition~\eqref{scltgrowth}, it \emph{must} be centered with the average displacement.

\medskip\noindent
{\bf Refined properties of RWRE.}
Section~\ref{RWREres} collects a few refined properties of RWRE that are not available in the literature but are needed in our proofs. In particular, Theorem~\ref{thm:RWREextra1} extends the mode of convergence in Proposition~\ref{prop:SL_trans_RWRE} to $L^p$, and we use the latter in the proof of Theorem~\ref{thm:TranGauss}. Concerning Theorem~\ref{thm:RWREextra2}, items (I) and (II) are similar in spirit, and say that in the recurrent and transient regime, respectively, the limiting speeds are not achieved after a finite time. These statements may sound plausible, but the disorder does not allow for a simple proof, as can be appreciated from Appendix~\ref{appB}. We use items (I) and (II) to construct {\bf(Ex.2)} and {\bf(Ex.8)}, respectively. Item  (III) gives some control (possibly not optimal) on the rate of convergence in Proposition~\ref{prop:Rec}, which we use in the proof of~\eqref{sufficient_rec}.

\medskip\noindent
{\bf Extensions and open problems:}

\begin{itemize}

\item ({\bf Regime with limiting stable laws}).
The only regime for which we have not analysed RWCRE fluctuations is when $\alpha$ is $s$-transient with $s \in (0,2]$. In this regime, the RWRE fluctuations are more intricate. Under the annealed measure it is known that, after an appropriate scaling, RWRE converges to certain stable laws or inverse-stable laws. Under the quenched measure fluctuations are drastically different and actually have only been partially characterised. In particular, different subsequential limits are possible under the quenched measure. For precise statements we refer the reader to~\cite{ZZ04} and references therein. The analysis of RWCRE with $s \in (0,2]$ should lead to interesting cooling-dependent crossover phenomena.% and deserves a more careful analysis.
  
\item ({\bf Higher dimensions}).
The focus in the present paper and in~\cite{AdH17}, \cite{ACdCdH18} is on \emph{one-dimensional} RWCRE. It is natural to consider RWCRE also in higher dimensions. However, much less is known for RWRE in higher dimensions, and most of the relevant results require additional and often technical assumptions (see~\cite{ZZ04}). Nonetheless, some of our arguments and results may be adapted to higher dimensions, in particular, those concerning the stability of directional transience and directional speed.

\item ({\bf Recurrence criterion for arbitrary cooling}).
We partially solved the problem of recurrence versus transience in Theorem~\ref{thm:RvT}. The following problem is left open:  If $\alpha$ is recurrent and non-symmetric, then what is a \emph{necessary and sufficient} condition on $\tau$ such that RWCRE is recurrent?

\item ({\bf RWRE oscillations}).
Some of the statements in Theorem~\ref{thm:RWREextra2} are non-optimal. For example, in part (2) we should be able to show that $E_0^\mu[Z_n] \neq v_\mu n$ for infinitely many $n\in\N$ for every $s$-transient $\alpha$ with $s \in (2,\infty)$.  Such an improvement would allow us to strengthen the statement in {\bf(Ex.8)} by saying that for every $s$-transient $\alpha$ with $s \in (2,\infty)$ there exists a $\tau$ such that~\eqref{infinity} is satisfied.
\end{itemize}

%%%%%%%%%%%%%%% SECTION 2 %%%%%%%%%%%%%%%%%%%%%%%%%%%%%%%%%%%%
\section{Proofs: Recurrence versus transience} \label{s:rxt}

%%%%%%%%%%%%%%%%%%%%%%%%%%%%%%%%%%%%%%%%%%%%%%%%%%%%%%%%%%%%
\subsection{Transience is preserved for any cooling with diverging increments} \label{ss:transience}
%We prove Theorem~\ref{thm:RvT}(a).
%\begin{proof}
\Proof{Proof of Theorem~\ref{thm:RvT}{\rm (a)}}
%Without loss of generality w
We assume that  $\langle\log\rho\rangle < 0$.%, i.e., that RWRE is right-transient.

\paragraph{ Basic coupling}
Let us consider a probability space $(S,{\cal S},{\cal P})$ on which random variables $(X_n)_{n\in\N_0}$ and $(Z^{(k)}_n)_{k\in\N,n \in \N_0}$ are defined such that
\begin{equation}\label{coupling}
\begin{aligned}
& Y_k = Z^{(k)}_{T_k},\, k \in \N, \quad  \bar{Y}^n = Z^{(\ell(n))}_{\bar{T}^n},\, n \in \N_0,\\
&{\cal P}\left(\left(X_n\right)_{n\in\N_0}\in\cdot\,\right)= \bb{P}\left(\left(X_n\right)_{n\in\N_0}\in\cdot\,\right), \\
&(Z^{(k)}_n)_{k \in \N,n \in \N_0} \text{ are independent in $k\in\N$,}\\
&{\cal P}\left((Z_n^{(k)})_{n\in\N_0}\in \cdot\,\right) = P^\mu_0\left(\left(Z_n\right)_{n\in\N_0}\in\cdot\,\right),\, k\in\N.
\end{aligned}
\end{equation}
This constitutes a \emph{coupling} of RWRE and RWCRE. We write $\cal{E}$ to denote expectation with respect to $\cal{P}$.
\paragraph{Leftmost record} 
Set $W := \inf\{Z_n \colon n\in\N_0\}$ and $W^{(k)} := \inf\{Z^{(k)}_n \colon n\in\N_0\}$, $k \in \N$.
% and note that, for every $k \in \N$, $W^{(k)} \leq Z^{(k)}_{T_k}$ and $W^{(k)}\leq 0$. 
By~\eqref{X_dec}, for any $a >0$ and $\ell\in\N$,
\begin{equation}
\label{e:drt}
\begin{aligned}
X_{\tau(\ell)} = \sum_{k=1}^\ell Y_k \, &\, =\, \sum_{k=1}^\ell  Z^{(k)}_{T_k} \Ind{\{Z^{(k)}_{T_k} > a\}} 
+ \sum_{k=1}^\ell  Z^{(k)}_{T_k} \Ind{\{Z^{(k)}_{T_k} \leq a\}}\\
&\geq \sum_{k=1}^\ell  Z^{(k)}_{T_k}  \Ind{\{Z^{(k)}_{T_k} > a\}} + \sum_{k=1}^\ell W^{(k)}.
\end{aligned}
\end{equation}
The following lemma tells us that the expectation of $-W$ is finite.% when RWRE is right-transient.
  
%%%%%%%%%%%%%%%%%%%%%%%%%%%%%%%%%
\begin{lemma} \label{l:fmi}
Suppose that $\langle \log \rho \rangle < 0$. Then $E^{\mu}_0[-W] <  \infty$.
\end{lemma}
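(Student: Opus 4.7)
My approach rests on the identity
\[
E^{\mu}_0[-W] = \sum_{n\geq 1} P^{\mu}_0(T_{-n}<\infty), \qquad T_{-n}:=\inf\{k\in\N_0 \colon Z_k = -n\},
\]
so the task reduces to showing geometric decay in $n$ of the left-hitting probabilities under $\mu$. I would start from the classical birth--death potential (``gambler's ruin'') calculation: solving the two-sided exit problem between $-n$ and $N$ and letting $N\to\infty$, using transience of $Z$ to $+\infty$ from Proposition~\ref{prop:LLN}, yields the quenched identity
\[
P^{\omega}_0(T_{-n}<\infty) = \frac{A_+}{A_+ + B_n}, \qquad A_+ := \sum_{y\geq 0}\prod_{i=1}^{y}\rho(i), \quad B_n := \sum_{k=1}^{n}\prod_{j=0}^{k-1}\rho(-j)^{-1}.
\]
The structural fact to exploit is that $A_+$ depends only on $(\omega(x))_{x\geq 1}$ while $B_n$ depends only on $(\omega(x))_{-n+1\leq x\leq 0}$; by the product structure of $\mu$ this gives the independence $A_+\perp B_n$.

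The next step is to select a fractional exponent $s\in(0,1]$ with $\phi(s):=\langle\rho^s\rangle<1$. Such an $s$ exists because uniform ellipticity~\eqref{uellcond} makes $\phi$ smooth, and $\phi(0)=1$, $\phi'(0)=\langle\log\rho\rangle<0$ force $\phi(s)<1$ for all sufficiently small $s>0$. Applying the elementary inequality $t^{s}\leq 1+t$ (valid for $t\geq 0$, $s\in(0,1]$) with $t=B_n/A_+$ gives the pointwise bound
\[
\frac{A_+}{A_+ + B_n} \leq \left(\frac{A_+}{B_n}\right)^{\!s} = A_+^{s}\,B_n^{-s}.
\]

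I would then estimate each factor separately. By the subadditivity $(\sum a_i)^{s}\leq\sum a_i^{s}$ for $s\in(0,1]$ and non-negative $a_i$ (termwise concavity of $x\mapsto x^s$), independence of the $\rho(i)$'s yields
\[
\langle A_+^{s}\rangle \leq \sum_{y\geq 0}\langle\rho^{s}\rangle^{y} = \frac{1}{1-\phi(s)} < \infty,
\]
while $B_n\geq\prod_{j=0}^{n-1}\rho(-j)^{-1}$ together with independence of the $\rho(-j)$'s gives
\[
\langle B_n^{-s}\rangle \leq \left\langle\prod_{j=0}^{n-1}\rho(-j)^{s}\right\rangle = \phi(s)^{n}.
\]
Using $A_+\perp B_n$, this yields $P^{\mu}_0(T_{-n}<\infty)\leq \phi(s)^{n}/(1-\phi(s))$; summing the geometric series over $n\geq 1$ bounds $E^{\mu}_0[-W]$ and completes the proof.

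The main subtlety is the need for a \emph{fractional} moment. The naive choice $s=1$ would require $\langle A_+\rangle<\infty$, i.e.\ $\langle\rho\rangle<1$, which by Proposition~\ref{prop:LLN} corresponds only to the \emph{positive-speed} transient regime. In the zero-speed transient regime $\langle\rho\rangle\geq 1$ (still compatible with $\langle\log\rho\rangle<0$), $A_+$ has infinite mean (cf.\ Kesten's renewal theorem), so using $s$ strictly less than $1$ is essential: smallness of $s$ simultaneously guarantees $\phi(s)<1$ (geometric decay of $\langle B_n^{-s}\rangle$) and keeps the $L^s$-bound on $A_+$ available through the inequality $(\sum a_i)^{s}\leq\sum a_i^{s}$.
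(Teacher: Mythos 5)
Your proof is correct, and it takes a genuinely different route from the one in the paper. The paper works with the quenched tail bound $P^\omega_0(W\leq -m) \leq \sum_{i\geq 1}\prod_{j=-m+1}^{i-1}\rho_j$ and then splits the environment space into a ``good'' set $\Omega(m,\gep)$ on which the empirical averages of $\log\rho_j$ are uniformly close to $\langle\log\rho\rangle$ (so the product decays exponentially) and a ``bad'' set whose $\mu$-probability is exponentially small by Cram\'er's theorem; the two estimates are then combined at the annealed level. You instead retain the exact quenched hitting formula $P^\omega_0(T_{-n}<\infty)=A_+/(A_++B_n)$, exploit the $\mu$-independence of $A_+$ and $B_n$, and pass to the annealed bound via a fractional moment: choosing $s\in(0,1]$ with $\phi(s)=\langle\rho^s\rangle<1$ (which, as you note, exists since $\phi$ is smooth under \eqref{uellcond} with $\phi(0)=1$ and $\phi'(0)=\langle\log\rho\rangle<0$), the subadditivity $(\sum a_i)^s\leq\sum a_i^s$ and termwise independence reduce both $\langle A_+^s\rangle$ and $\langle B_n^{-s}\rangle$ to geometric series. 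This is the standard fractional-moment/Chernoff device for one-dimensional RWRE; it avoids the explicit good/bad environment split and the large deviation input, at the cost of requiring the exact ruin formula and a small-exponent moment bound $\langle\rho^s\rangle<\infty$ (which holds trivially under uniform ellipticity, where $\rho$ is bounded). Both proofs ultimately deliver exponential decay of $P^\mu_0(W\leq -m)$, so they produce the same conclusion with essentially the same hypotheses; your remark about the distinction between $s<1$ and $s=1$ (zero-speed versus positive-speed transience) is a good sanity check and correctly identifies why the fractional exponent is essential.
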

%%%%%%%%%%%%%%%%%%%%%%%%%%%%%%%%%%
 
%%%%%%%%%%%%%%%%%%%%%%%%%%%%%%%%%%%%%%%%
\begin{proof}
Write
\begin{equation} \label{e:exp}
E^\omega_0[- W] = \sum_{m\in\N} P^\omega_0(W \leq - m).
\end{equation}
%%
%For $m \in \N$, let $H_{- m}$ denote the first hitting time of $-m$. 
For $j\in\Z$, let $\rho_j := \frac{1-\omega(j)}{\omega(j)}$
% Since
%%%
%\begin{equation} \label{e:expalt}
%\prod_{j=-m+1}^{i-1} \rho_{j} = \exp \left\{ (m+i-1) \, \frac{\sum_{j=-m+1}^{i-1} \log \rho_j}{m+i-1} \right\},
%\qquad i \in \N,
%\end{equation}
% %%
%and $(\log\rho_j)_{j\in\Z}$ are i.i.d., the strong law of large numbers tells us that $\mu$-a.s.
%%%
%\begin{equation} \label{e:slln}
%\lim_{i\to\infty} \frac{\sum_{j=-m+1}^{i-1} \log \rho_j}{m+i-1} = \langle\log\rho\rangle.
%\end{equation}
%%
 and for  $m \in \N, \, \gep > 0$, define
\begin{equation}\label{Omegaset}
\Omega(m,\gep) := \left\{ \omega \colon \sup_{i \in \N} \abs{\frac{\sum_{j = -m+1}^{i-1} \log \rho_j}{m+i-1}
- \langle \log \rho \rangle} < \gep \right\}.
\end{equation}
For $0<\gep < -\tfrac12 \langle\log\rho\rangle$ and $\omega\in \Omega(m,\gep)$,
\begin{equation}\label{e:expdecayW}
\prod_{j=-m+1}^{i-1} \rho_{j} \leq e^{\frac12 \langle\log\rho\rangle (m+i-1)}.
\end{equation} 
Therefore there is a $c>0$ such that, for all $m \in \N$ and $\omega\in \Omega(m,\gep)$,
\begin{equation} \label{e:qedi} 
\begin{aligned}
P^\omega_0(W \leq - m)%&= P^\omega_0\left(\inf_{n\in\N} Z_n  \leq - m\right) %= P^\omega_0(\tau_{-m} < \infty)\\[0.3cm]
&\leq  \sum_{i=1}^\infty \prod_{j = -m+1}^{i-1} \rho_{j} \leq e^{- c m},
\end{aligned}
\end{equation}
where the first inequality follows from a standard computation for RWRE (see~\cite[p.196 (2.1.4)]{ZZ04}), and the inequality uses~\eqref{e:expdecayW}.

Next we note that %rely on the exponential decay in $m$ of the probability of $\Omega^c(m,\gep)$, the complement of $\Omega(m,\gep)$, i.e., 
there is a $c'> 0$ such that, for all $m\in\N$, using 
\begin{equation} \label{e:dOm}
\begin{aligned}
%\mu(\Omega^c(m,\gep))= 
\mu\left(\omega \colon \sup_{i \in \N} \abs{\frac{\sum_{j = -m+1}^{i-1} \log \rho_j}{m+i-1} 
- \langle \log \rho \rangle} \geq \gep \right) \leq e^{- c' m},
\end{aligned}
\end{equation}
where the  inequality follows from the union bound in combination with the large deviation principle for the i.i.d.\ random variables $(\log \rho_j)_{j\in\Z}$. (For the latter the uniform ellipticity assumption in \eqref{uellcond} amply suffices, but
can be substantially weakened.). Combining \eqref{e:qedi} and~\eqref{e:dOm} we see that, for all $m\in\N$,
\begin{equation} \label{e:est_W}
\begin{aligned}
P^\mu_0 (W \leq -m)  %\\
%&= \int_{\Omega(m,\gep)} P^\omega_0 (W \leq -m) \, d\mu(\omega)
%+\int_{\Omega^c(m,\gep)} P^\omega_0 (W \leq -m) \, d\mu(\omega)\\
%&\leq  e^{-c m} + e^{-c' m} 
\leq 2\,e^{-(c \wedge c') m}.
\end{aligned}
\end{equation}
The result follows from~\eqref{e:exp} and~\eqref{e:est_W}.%This, together with%~\eqref{e:exp} and
%~\eqref{e:est_W},we get the claim. 
\end{proof}

\paragraph{Transience along subsequences via the leftmost record}
We continue %with 
the proof of Theorem~\ref{thm:RvT}(a). Pick $a := 4\,E^{\mu}_0[- W]<\infty$. Since $\alpha$ is right-transient and $T_k \to \infty$, we have
%%
%\begin{equation}
$ \bb{P}(Y_k > a) =  P^{\mu}_0(Z_{T_k} > a) \to 1$.
%\end{equation}
%%
From stochastic domination together with the independence of $Z^{(k)}_{T_k}$, $k\in\N$, we get that
\begin{equation} \label{e:elz}
\liminf_{\ell\to\infty} \frac{1}{\ell} \sum_{k=1}^\ell Z^{(k)}_{T_k} \Ind{\{Z^{(k)}_{T_k} > a\}} 
\geq a \liminf_{\ell\to\infty}  \frac{1}{\ell} \sum_{k=1}^\ell  \Ind{\{Z^{(k)}_{T_k} > a\}}
\geq a  \qquad {\cal P}\text{-a.s.}
\end{equation}
Now, applying the law of large numbers  %for $W^{(k)}$, 
%%
%\begin{equation} \label{e:elw}
%\lim_{\ell\to\infty} \frac{1}{\ell} \sum_{k=1}^\ell W^{(k)} = E^\mu_0[W] = - \tfrac{1}{4} a \qquad {\cal P}\text{-a.s.}
%\end{equation}
%%
and~\eqref{e:elz} into~\eqref{e:drt} we get
\begin{equation}
\liminf_{\ell\to\infty} \frac{X_{\tau(\ell)}}{\ell} \geq  \frac{3}{4} a \qquad \bb{P}\text{-a.s.}
\end{equation}
%%
%Therefore there exists an $L \in \N$ such that, for $\ell > L$,
%%%
%\begin{equation}
%X_{\tau(\ell)} \geq \tfrac{1}{2} a \, \ell \qquad \bb{P}\text{-a.s.},
%\end{equation}
%%
which settles right-transience along the sequences of refreshing times.

\paragraph{Transience of the full sequence}
For  $\ell$  large and $n \in [\tau(\ell), \tau(\ell+1))$ we have $X_n \geq \tfrac12 a \, \ell + W^{(\ell)}$.
Hence
\begin{equation}
\bb{P}\left( \inf_{n \in [\tau(\ell), \tau(\ell+1))} X_n \leq 0 \right) \leq  {\cal P}\left(W^{(\ell)} \leq -\frac12 a \, \ell \right).
\end{equation}
By~\eqref{e:est_W}, %there is a $c''>0$ such that
%%
%\begin{equation}
$\sum_{\ell\in\N}{\cal P}\left(W^{(\ell)} \leq - \tfrac12 a \, \ell \right) <\infty$
%\leq e^{-c''\,\ell},
%\end{equation}
%%%
%which implies that
%%
%\begin{equation}
%\sum_{\ell\in\N}  \bb{P}\left(\inf_{n \in [\tau(\ell), \tau(\ell+1))} X_n \leq 0 \right) < \infty.
%\end{equation}
%%
and hence, by the Borel-Cantelli lemma, %$\bb{P}\left(\inf_{n \in [\tau(\ell), \tau(\ell+1))} X_n \leq 0 \;\;  \text {i.o.}\right) = 0$,
the right-transience of the full sequence follows.
%\end{proof}
\QED

%%%%%%%%%%%%%%%%%%%%%%%%%%%%%%%%%%%%%%%%%%%%%%%%%%%%%%%%
\subsection{Recurrence is preserved for fast enough cooling}

%We prove Theorem~\ref{thm:RvT}(b).

%\begin{proof}
\Proof{Proof of Theorem~\ref{thm:RvT}{\rm (b)}}
The sequence $(\lambda_{\tau,n})_{n \in \N_0}$ of $\ell_2(\N_0)$-unit vectors in~\eqref{e:lfp}%--\eqref{e:l2norm} 
~admits a subsequence $(n_i)_{i\in\N_0}$ for which there is a vector $\lambda_*$ with $\|\lambda_*\|_2 \leq 1$ such that, for every $k \in \N_0$,
%\begin{equation}
$\lambda_{\tau,n_i}(k) \to \lambda_*(k)$.
%\end{equation}
By Theorem~\ref{thm:rlpts} (to be proved in Section~\ref{sec:pTSKlp}), and condition~\eqref{sufficient_rec}
\begin{equation}\label{convnocenter}
\frac{X_{n_i}}{\sqrt{\var(X_{n_i})}}
\tod V^{\otimes \lambda_*} + a\,\Phi.
\end{equation}
Since $\var (X_n) \to \infty$, $(n_i)_{i\in\N_0}$ can be chosen such that
\begin{equation} \label{e:n_j}
\frac{n_{i-1}}{\sqrt{\var(X_{n_i})}} < \frac12, \qquad i \in \N. 
\end{equation}
%%
%With this choice,
%%%
%\begin{equation} \label{e:nii}
%\begin{aligned}
%\tfrac{X_{n_i}}{\sqrt{\var(X_{n_i})}} > 1 \quad
%&\Longrightarrow \quad \tfrac{X_{n_i} - X_{n_{i-1}}}{\sqrt{\var(X_{n_i})}} >  \tfrac12,\\
%\tfrac{X_{n_i}}{\sqrt{\var(X_{n_i})}} <- 1\quad
%&\Longrightarrow \quad  \tfrac{X_{n_i} - X_{n_{i-1}}}{\sqrt{\var(X_{n_i})}} < - \tfrac12.
%\end{aligned}
%\end{equation}
%%
Now, because $V^{\otimes \lambda_*} + a\, \Phi$ has full support on $\R$, %and~\eqref{convnocenter} holds, ~\eqref{e:n_j} 
there is an $\gep > 0$ for which
\begin{equation}
\begin{aligned}
\bb{P} \left( \frac{X_{n_i} - X_{n_{i-1}} }{\sqrt{\var(X_{n_i})}} > \frac12 \right)
&\geq \bb{P} \left( \frac{X_{n_i}} {\sqrt{\var(X_{n_i})}} > 1 \right) > \gep,\\
\bb{P} \left( \frac{X_{n_i}- X_{n_{i-1}}}{\sqrt{\var(X_{n_i}})} < - \frac12 \right) 
&\geq \bb{P} \left( \frac{X_{n_i}} {\sqrt{\var(X_{n_i})}} < -1 \right) > \gep.
\end{aligned}
\end{equation}
The independence of $(X_{n_i} - X_{n_{i-1}})_{i\in\N}$ and the Borel-Cantelli lemma imply%, we obtain that
\begin{equation}
\label{BC}
\bb{P} \left( \frac{X_{n_i} - X_{n_{i-1}}}{\sqrt{\var(X_{n_i})}}
> \frac12 \,\, \text{i.o.} \right) = 1,
\qquad
\bb{P} \left( \frac{X_{n_i} - X_{n_{i-1}}}{\sqrt{\var(X_{n_i})}}
< - \frac12 \,\, \text{i.o.} \right) = 1.
\end{equation}
Since $X$ makes steps of size 1 only, we get from~\eqref{e:n_j} %that
%\begin{equation}
%\begin{aligned}
%&X_{n_i} \leq 0 \quad \Longrightarrow \quad \tfrac{X_{n_i} - X_{n_{i-1}}}{\sqrt{\var(X_{n_i})}}
%\leq \tfrac{- X_{n_{i-1}}}{\sqrt{\var(X_{n_i})}} \leq \tfrac{n_{i-1}}{\sqrt{\var(X_{n_i})}} < \tfrac12,\\
%&X_{n_i} \geq 0 \quad \Longrightarrow \quad \tfrac{X_{n_i} - X_{n_{i-1}}}{\sqrt{\var(X_{n_i})}}
%\geq \tfrac{- X_{n_{i-1}}}{\sqrt{\var(X_{n_i})}} \geq -\tfrac{n_{i-1}}{\sqrt{\var(X_{n_i})}} > -\tfrac12.
%\end{aligned}
%\end{equation}
%Hence %it follows from~\eqref{BC} that
%%
%\begin{equation}
$\bb{P} \left( X_{n_i} > 0 \,\, \text{ i.o.}\right) = 1$ and $\bb{P} \left( X_{n_i} < 0 \,\, \text{ i.o.} \right) = 1$,
%\end{equation}
%%
which proves the first claim in Theorem~\ref{thm:RvT}(b).

It remains to show that~\eqref{sufficient_fast} implies~\eqref{sufficient_rec}. In the remainder of the proof, $c,C$ denote constants that may change from line to line, but do not depend on $n$. First note that~\eqref{sinaip} and~\eqref{X_dec} imply 
\begin{equation}\label{varlogpiece}
%\begin{align}
%&c\log^4T_k \leq \var\prt{Y_k} \leq C \log^4 T_k,
%\label{varlogpiece}\\
%&\var (X_{\tau(\ell)}) \geq c \sum_{k = 1}^\ell  \log^4 T_k.
%\label{varlog}
c\log^4T_k \leq \var\prt{Y_k} \leq C \log^4 T_k, \quad
\var (X_{\tau(\ell)}) \geq c \sum_{k = 1}^\ell  \log^4 T_k.
%\end{align}
\end{equation}
For any fixed $\gep>0$, Theorem~\ref{thm:RWREextra2}(III),~\eqref{varlogpiece} % and~\eqref{varlog} 
yield
\begin{equation}\label{eeb}
 \abs{\bb{E}[\mathfrak{X}_{\tau(\ell)}]}
\leq  \sum_{k = 1}^{\ell}\ \frac{\sqrt{\var (Y_k)}}{\sqrt{\var(X_{\tau(\ell)})}}\,
\abs{\bb{E}\crt{\frac{Y_k}{\sqrt{\var(Y_k)}}}}
\leq  C \, \frac{\sum_{k=1}^\ell \log^{\left(\frac43+\gep\right)} T_k}{\sqrt{\sum_{k=1}^\ell \log^4 T_k}}.
\end{equation}
By H\"older's inequality it follows that
\begin{equation}\label{holder}
\sum_{k =1}^\ell \log^{\left(\frac43 + \gep\right)}T_k \leq 
\prt{\sum_{k =1}^\ell \log^4T_k}^{\frac{4/3+\gep}{4}} \ell^{\frac{8/3-\gep}{4}},
\end{equation}
which leads to
\begin{equation}\label{holbound}
\abs{\bb{E}[\mathfrak{X}_{\tau(\ell)}]} \leq C \, 
\frac{\prt{\sum_{k =1}^\ell \log^4T_k}^{\frac{4/3+\gep}{4}} \ell^{\frac{8/3-\gep}{4}}}
{\sqrt{\sum_{k=1}^\ell \log^4 T_k}}\leq C\frac{ \ell^{\frac{8/3-\gep}{4}}}
{\prt{\sum_{k=1}^\ell \log^4 T_k}^{\frac{2/3 -\gep}{4}}}.
\end{equation}
By~\eqref{sufficient_fast} it follows that
%%
%\begin{equation}\label{e:logsum}
$\sum_{k = 1}^\ell\log^{4}T_k \geq c \ell^{4\gamma + 1}$,
%\end{equation}
%%
and so
\begin{equation}\label{e:hol}
\abs{\bb{E}[\mathfrak{X}_{\tau(\ell)}]} 
\leq 
C \frac{\ell^{\frac{8/3-\gep}{4}}}
{\prt{\ell^{4\gamma+ 1}}^{\frac{2/3 -\gep}{4}}}
=
 C  \ell^{\frac12-\gamma\left(\frac23-\gep\right)}.
\end{equation}
when $\gamma>\tfrac{3}{4-6\gep}$, $\abs{\bb{E}[\mathfrak{X}_{\tau(\ell)}]} \to 0$.
% and proves the claim. %Since $\gep>0$ is arbitrary, this proves the claim.

To conclude the proof we take arbitrary $n \in\N$. We have 
\begin{equation}\label{bsplit}
\abs{\bb{E} [\mathfrak{X}_n]} \leq \frac{\var(X_{\tau(\ell(n)-1)})}{\var(X_{n})}\abs{\bb{E} [\mathfrak{X}_{\tau(\ell(n)-1)}]} 
+ \frac{\var(\bar{Y}^n)}{\var(X_{n})}\frac{\abs{\bb{E}[\bar{Y}^n]}}{\var(\bar{Y}^n)}.
\end{equation}
By \eqref{e:hol}, the first term in the right-hand side of \eqref{bsplit} vanishes as $n \to \infty$. As for second term, it is bounded by $\gep + (K/\var(X_n))$. Indeed, by~\eqref{sinaip} and~\eqref{coupling}, for any $\gep>0$ there is a $K>0$ such that
%\begin{equation}\label{epsbbig}
$\bar{T}^n >K$ implies $\abs{\bb{E}[\bar{Y}^n]}/\var(\bar{Y}^n) <\gep$. 
%\end{equation}
%Therefore 
%If $\bar{T}^n\leq K$, since $\bb{E}[\bar{Y}^n]<K$, it follows that the last term in~\eqref{bsplit} is smaller that 
%\begin{equation}\label{gbbound}%global boundary bound
%\frac{\var(\bar{Y}^n)}{\var(X_{n})}\frac{\abs{\bb{E}[\bar{Y}^n]}}{\var(\bar{Y}^n)} 
%\leq 
%this term is smaller than 
%\end{equation}
As $\var(X_n) \to \infty$ and $\gep>0$ is arbitrary, %it follows that
%\begin{equation}\label{bconclude}
$\abs{\bb{E} [\mathfrak{X}_n]} \to 0$.
%\end{equation}
%\end{proof}
\QED

%%%%%%%%%%%%%%%%%%%%%%%%%%%%%%%%%%%%%%%%%%%%%%%%%%%%%%%%%%%%
\subsection{Breaking of transience}
\label{breaktr}

%We examine {\bf(Ex.1)}.

%\begin{proof}
\Proof{Proof of {\bf(Ex.1)}}
We %show how to build
construct the two maps $\tau'$ and $\tau''$ in {\bf(Ex.1)}  %Such an $\alpha$ exists. %as shown by the following construction. 
%Let $\alpha = u\delta_v + (1-u)\delta_u$ and pick $u,v \in (0,1)$ such that $\langle\log\rho\rangle < 0$. Letting $u \downarrow 0$, we get that   $E^\mu_0[Z_1] \downarrow -1$.% and $E^\mu_0[Z_1] = u(2v-1) + (1-u)(2u-1)$.  Pick $u\in (0,\tfrac12)$,  $u\log\frac{1-w}{w} + (1-u)\log\frac{1-u}{u}=0$, and pick $v \in (w,1)$. Then $\langle\log\rho\rangle<0$.  Hence there is a choice of $u,v$ that fits the requirements.     

%\medskip\noindent
\paragraph{The cooling map $\tau'$} 
Take $\alpha$ such that $\langle\rho \rangle>1$ and $\langle \log\rho\rangle <0$. By Proposition \ref{prop:LLN},
%%
%\begin{equation}\label{zerospeed}
$P^\mu_0\prt{\lim_{n\to\infty} \frac{ Z_n}{n} =0} = 1$.
%\end{equation}
%%
In this case we can build a cooling map satisfying \eqref{Cesarocooling} for which
%%
%\begin{equation} \label{ltr}
$\bb{P}(\lim_{n \to \infty} X_n = - \infty) = 1$.
%\end{equation}
%%
The construction goes as follows. Let $v := E^\mu_0[-Z_1]>0$. Using the notation introduced in~\eqref{coupling}, we set $N_0 = 0$ and for each $i\in \N$ we choose $N_i$ such that $N_i\geq 2N_{i-1}  $ and 
\begin{equation}\label{slowspeed}
E^\mu_0[Z_{i N_i}]< vN_i. 
\end{equation}
%The existence of $\prt{N_i}_{i \in \N}$ follows from \eqref{zerospeed}. 
We take the \emph{ $i$-th environment piece} to be composed of one increment of size $i N_i$ followed by $N_i$ increments of size $1$ (see Fig.~\ref{fig:lenv}). By~\eqref{slowspeed}, the increments over the $i$-th piece have negative expectation.%:
%\begin{equation}\label{negdrift}
%\mc{E}\crt{\,  Z^{( 1)}_{i N_i} + \sum_{k = 2}^{N_i + 1}Z^{(k)}_{1} }<0.
%\end{equation}

%%%%%%%%%%%%%%%%%%%%%%%%%%%%%%%%%%%%%%%%%%%%%%%%%
\begin{center}
\begin{figure}[htbp]
\includegraphics[clip,trim=5cm 20cm 6cm 4.5cm, width=.95\textwidth]{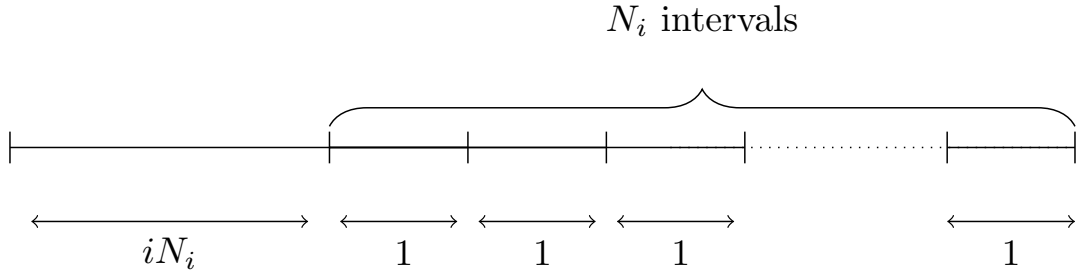}
\caption{The $i$-th environment }\label{fig:lenv}
\end{figure}
\end{center}

%%%%%%%%%%%%%%%%%%%%%%%%%%%%%%%%%%%%%%%%%%%%%%%%%%

The idea to build the cooling map is, for each $i\in\N$, to repeat $I_i$ times the $i$-th environment piece in order to induce left-transience of the random walk. Formally, %define %the cooling map, 
for $i\in \N$, $j \in \N_0$, define
%%%
%\begin{equation}\label{s0left}
$s(0) := 0,$ $s(i,j) := s(i-1) + j (N_i + 1)$, $s(i) := s(i,I_i)$,
%\end{equation}
%%%
let $A_i:= \chv{s(i,j)\colon\, j \leq I_i} $, and define  %, 
the increments of the map $\tau'$, $\chv{T'_k=\tau' (k) - \tau'(k-1)}_{k\in \N}$, by
%%%
\begin{equation}\label{T1leftt}%Tk for cooling 1 left transience 
T'_k :=
\begin{cases}
i N_i,  & \text { if }k-1 \in A_i    \text{ for some } \ell \in \N,\\
1, & \text { else}.
\end{cases}
\end{equation}
%%%
%Before examining the transience we argue 
We note that, irrespective of the choice of $(I_i)_{i \in \N}$, this construction ensures the Cesaro divergence of the increments.

%\paragraph{Cesaro divergence}
%Note first that the sequential average of the first $j$ terms within the $i$-th piece is always at least $i/2$.
%Indeed, for $j \leq  N_{i} + 1$ 
%\begin{equation}\label{cwithin}
%\frac{i N_i + j-1}{j} \geq\frac{i N_i }{N_i+ 1}\geq\frac{i}{2}.
%\end{equation}
%%Because
%%\begin{equation}\label{fractions}
%%\frac{a}{b}<\frac{c}{d} \quad \Longrightarrow \quad \frac{a}{b}<\frac{a + c}{b+d} <\frac{c}{d},
%%\end{equation}
%It follows that, for every $\ell >s(i -1)$,
%\begin{equation}\label{cesaro tail}
%\sum_{k = s(i-1) + 1}^{\ell} \frac{T'_k}{\ell -s(i-1)} \geq \frac{i}{2}.
%\end{equation}
%To conclude the Cesaro divergence, note that \eqref{cesaro tail} implies
%\begin{equation}\label{cesarodiv}
%\begin{aligned}    
%\liminf_{\ell\to \infty}\frac{\sum_{k = 1}^\ell T'_k}{\ell}  
%%&= \liminf_{\ell\to \infty}\bigg(\frac{s(i-1)}{\ell} %\frac{\sum_{k = 1}^{s(i-1)}T'_k}{s(i-1)} 
%%&+ \frac{k - s(i-1)}{k}\sum_{k = s(i-1) + 1}^{\ell} \frac{T'_k}{\ell -s(i-1)} \bigg)\\
%%&=  \liminf_{\ell\to \infty}\prt{\frac{\ell - s(i-1)}{k}\sum_{k = s(i-1) + 1}^{\ell} \frac{T'_k}{\ell -s(i-1)}}
% \geq \frac{i}{2}.
%\end{aligned}
%\end{equation}

\paragraph{Left transience}
Before choosing $I_i$, we note that the displacement over different $i$-th environment pieces are i.i.d\ random variables. For $\ell\in \N$ and $i \in \N_0$, denote such displacements by
\begin{equation}\label{Displacement}
D^{(j)}_i :=  Z^{\left(s(i,j)+1\right)}_{\ell N_i} + \sum_{k = s(i,j)+2}^{s(i,j)+N_i+1}Z^{(k)}_{1}. 
\end{equation}
By the strong law of large numbers, there is a sequence of positive integers $(M_i)_{i\in\N}$ satisfying
\begin{equation}\label{leftdrift}
M_{i+1}\geq M_i,
\qquad \mc{P}\crt{\sup_{m \geq M_i}\sum_{j = 1}^m D^{(j)}_i \geq 0  }< 2^{-i}.
\end{equation}
The sequence $\prt{I_i}_{i \in \N}$ is chosen to satisfy the following condition:
\begin{equation} 
\mc{P}\prt{  \sum_{j= 1}^{I_i} D^{(j)}_i \geq - (M_{i + 1}+1) (i+2) N_{i + 1}} < 2^{-i}, 
\label{fastleft}
\end{equation}
By the Borel-Cantelli Lemma, due to~\eqref{coupling}~\eqref{leftdrift} and~\eqref{fastleft}, we have that eventually
\begin{equation}\label{leftevent}
\begin{aligned}
& X_{\tau'(s(i))} - X_{\tau'(s(i -1))} <  - (M_{i + 1}+1) (i+2) N_{i + 1}, \\[0.2cm]
&\sup_{n \in [\tau'(s(i)), \tau'(s(i+1))]} X_{n} - X_{\tau'(s(i))} < (M_{i + 1}+1) (i+2) N_{i + 1}, \\[0.2cm]
\end{aligned}
\end{equation} 
which implies left-transience (see Fig.~\ref{fig:leftevent}).

%%%%%%%%%%%%%%%%%%%%%%%%%%%%%%%%%%%%%%%%%%
\begin{figure}[htbp] 
\begin{center} 
\includegraphics[clip,trim=5cm 19cm 2cm 4.5cm, width=.95\textwidth]{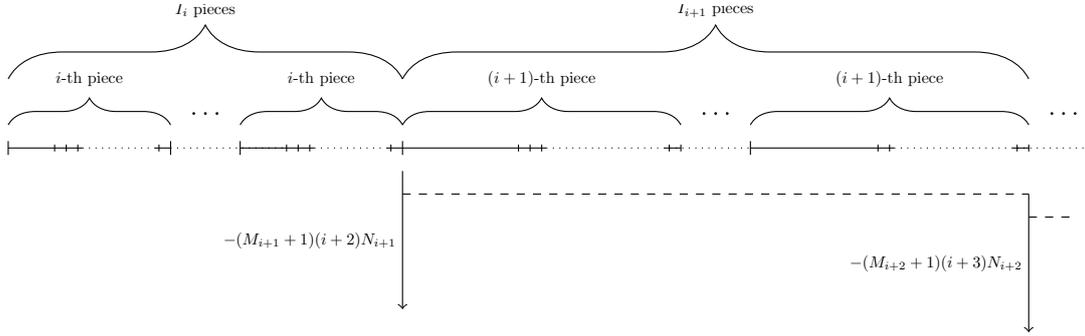}
%\vspace{28mm}
\caption{Picture of the bound  encoded in~\eqref{leftevent}. The downarrows represent the decrease at the end of the last $i$-th environment piece in comparison with the value at the beginning of the first $i$-th environment piece. The dashed line represents the upper bound on the supremum of the random walk.}% along the next $(i+1)$-st environment pieces.}
\label{fig:leftevent}
\end{center}
\end{figure}
%%%%%%%%%%%%%%%%%%%%%%%%%%%%%%%%%%%%%%%%%%%%%

%\medskip\noindent
\paragraph{The map $\tau''$}
In the same setting as {\bf (Ex.1)}, we construct a recurrent RWCRE by modifying the cooling map $\tau'$, inserting large intervals. First note that, since $\langle\log\rho\rangle<0$, we can define, for any $N\in\N$ and $\gep>0$,
\begin{equation}\label{strech}
H(N,\gep) := \inf \left\{m\in\N\colon\,\mc{P}\prt{ \inf_{n> m}Z^{(1)}_n \leq N} < \gep\right\}.
\end{equation}
Let $T' _k := \tau'(k) - \tau'(k-1)$. Inductively, define the increment sequence $\chv{T''_k}_{k \in \N}$ by setting
\begin{equation}\label{T''}
\begin{cases}
T''_k = T'_k, &k \in \N \setminus \chv{s(i)\colon i \in \N},\\
T''_{s(i)} = T'_{s(i)}+ H\left(\sum_{i = 1}^{s(i-1)}T''_i, 2^{-i}\right), &i \in \N,
\end{cases}
\end{equation}
where $s(0):=0$ and
\begin{equation}\label{seqs}
s(i) := \left\{\inf_{k>s(i-1)}\colon\, \mc{P}\left(\sum_{i = s(i-1)+1}^k Z^{(i)}_{T'_i} 
\geq -\sum_{i=1}^{s(i-1)}T''_i\right)< 2^{-i}\right\}.
\end{equation}
With these definitions, set $\tau''(k) := \sum_{i = 1}^k T''_i$ and note that, since $T''_k \geq T'_k$, the increments are Cesaro diverging. We conclude the proof, by noting that~\eqref{coupling},~\eqref{strech},~\eqref{seqs} and the Borel-Cantelli lemma imply
\begin{equation}\label{oscilate}
\begin{aligned}
&\bb{P}\prt{X_{\tau''(s(i)-1)} -X_{\tau''(s(i-1))}> -{\tau''(s(i-1))} \text{ i.o.}}= 0 ,\\
&\bb{P}\prt{X_{\tau''(s(i))}-X_{\tau''(s(i)-1)} < \tau''(s(i)-1)\text{ i.o.}} = 0.
\end{aligned}
\end{equation}
%%
%From ~\eqref{oscilate} %and the nearest-neighbour property of the random walk, it follows that
%\begin{equation}\label{backk_forkio}
%\bb{P}\prt{X_{n}=0 \text{ i.o.}} = 1.
%\end{equation}
%%
%\end{proof}
\QED
%%%%%%%%%%%%%%%%%%%%%%%%%%%%%%%%%%%%%%%%%%%%%%%%%%%%%%%%%%%
\subsection{Breaking of recurrence}

%We examine {\bf(Ex.2)}.

%\begin{proof}
\Proof{Proof of {\bf(Ex.2)}}
We show that there exists a recurrent non-symmetric $\alpha$ and a cooling map $\tau$ for which $(\alpha,\tau)$ is transient. The construction that follows is possible because, by Theorem~\ref{thm:RWREextra2}(1), there is a recurrent non-symmetric $\alpha$ for which at least one of the sets
\begin{equation}
\begin{aligned}
\mathcal{N}_+ := \{n \in \N \colon\, E^\mu_0[Z_n] > 0\}, \quad\mathcal{N}_- := \{n \in \N \colon E^\mu_0[Z_n] < 0\},
\end{aligned}
\end{equation}
is infinite. Assume without loss %of generality 
that $\mathcal{N}_+ =\chv{n_1<n_2<\ldots}$ is infinite.% \infty$. Let $n_1<n_2<\ldots$ denote the elements of $\mathcal{N}_+$.

Successively pick $N_j$ consecutive increments of size $n_j$ for every $j\in\N$, where the sequence $\prt{N_j}_{i\in\N}$ will be chosen below. 
 By the strong law of large numbers, for all $j\in\N$,
\begin{equation}
\lim_{m\to\infty}  \frac{1}{m} \sum_{k=1}^m Z^{(k)}_{n_j} = E^{\mu}_0[Z_{n_j}] >0 \quad \mc{P}\text{-a.s.} ,
\end{equation}
from which it follows that there are $(M_j)_{j\in\N}$ satisfying 
\begin{equation}\label{MBC}% M Borel Cantelli
\mc{P}\left(\inf_{m \geq M_j} \sum_{k=1}^m Z_{n_j}^{(k)} \leq 0 \right) \leq \frac{1}{j^2}.
\end{equation}
Next, pick $N_j$ such that
\begin{equation}
\label{sumtrim}
\mc{P}\left(\frac{1}{N_j}\sum_{k=1}^{N_j} Z_{n_j}^{(k)} \leq \frac12 \,E^\mu_0[Z_{n_j}] \right) \leq \frac{1}{j^2},
\end{equation}
and
\begin{equation} \label{ast}
\frac12 N_j \,E^\mu_0[Z_{n_j}] \geq (M_{j+1}+1) n_{j+1}.
\end{equation}
Define
%%
%\begin{equation}
$s(0) := 0$, $s(j) := s(j-1) + N_j$, for $j \in\N$. %, \quad i\in\N,
%\end{equation}
%%
By~\eqref{coupling}, it follows that
\begin{equation}\label{psumable}
\begin{aligned}
&\bb{P}\left(X_{\tau(s(j))}-X_{\tau(s(j-1))} \leq \frac12 N_j \,E^\mu_0[Z_{n_j}] \right)%\\
% &= \mc{P}\left(\frac{1}{N_j}\sum_{k=1}^{N_j} Z_{n_j}^{(k)} \leq \tfrac12 E^\mu_0[Z_{n_j}] \right)
\leq \frac{1}{i^2}.
 \end{aligned}
\end{equation}
Consequently, by the first Borel-Cantelli lemma, it follows that $\bb{P}$-a.s.\ for $j$ sufficiently large,
\begin{equation}\label{eventualincrement}
X_{\tau(s(j))} >X_{\tau(s(j-1))} + \frac12 N_j \,E^\mu_0[Z_{n_j}].
\end{equation}
We  conclude the proof by noting that \eqref{eventualincrement},~\eqref{ast}~\eqref{coupling} imply that
\begin{equation}
\begin{aligned}
\bb{P}\left(X_n = 0 \, \text{ i.o.} \right)& 
%\bb{P}\left(\inf_{n \in [\tau(s(j)),\tau(s(j+1))]} [X_n - X_{s(j)}] \leq -\tfrac12 N_{j}
%E^\mu_0[Z_{n_j}] \, \text{ i.o.} \right)\\[0.2cm]
%&\leq \bb{P}\left(\inf_{n \in [\tau(s(j)),\tau(s(j+1))]} [X_n - X_{s(j)}]
%\leq -(M_{j+1}+1) n_{j+1} \, \text{ i.o.} \right)\\
&\leq \mc{P}\left(\inf_{m \geq M_{j+1}} \sum_{k=1}^m Z_{n_{j+1}}^{(k)} \leq 0 \,\text{ i.o.}\right) = 0,
\end{aligned}
\end{equation}
%%
%where, the first inequality uses~\eqref{eventualincrement}, the second inequality uses~\eqref{ast}, the third inequality uses~\eqref{coupling} and the fact that $X$ makes steps of size $1$ only,
where the equality follows from~\eqref{MBC}. 
%\end{proof}
\QED

%%%%%%%%%%%% SECTION 3 %%%%%%%%%%%%%%%%%%%%%%%%%%%%%%%%%%
\section{Proofs: Mixed fluctuations} \label{sec:Fluc}

%%%%%%%%%%%%%%%%%%%%%%%%%%%%%%%%%%%%%%%%%%%%%%%%%%%%%%%%%%%%
\subsection{Mixed fluctuations in the Sinai-regime}\label{sec:pTSKlp} %proof Theorem Sinai Kesten limit points

%We prove Theorem~\ref{thm:rlpts}.

%\begin{proof}
\Proof{Proof of Theorem~\ref{thm:rlpts}}
The proof is organised into several steps.

\paragraph{Tightness}
Tightness follows from the constant variance scaling in~\eqref{e:sca}, because for any $K>0$, by Chebyshev's inequality,
\begin{equation}
\bb{P}(\abs{\mathfrak{X}_n-\bb{E}[\mathfrak{X}_n]}>K)\leq  \frac{1}{K^2}.
\end{equation}
We identify the limit points. As noted earlier, the sequence $\prt{\lambda_{\tau,n}}_{n\in\N_0}$ of $\ell_2(\N_0)$-unit vectors in \eqref{e:lfp} admits a subsequence $(n_i)_{i\in \N_0}$ for which there is a vector $\lambda_* \in \ell_2(\N_0)$ with $\|\lambda_*\|_2\leq 1$ such that,
\begin{equation}\label{pointwise}
\lim_{i\to\infty} \lambda_{\tau,n_i}(k) = \lambda_*(k)\quad \forall\, k \in \N_0.
\end{equation}
We proceed by comparing $\mathfrak{X}_n-\bb{E}[\mathfrak{X}_n]$ with $V^{\otimes\lambda_{\tau,n}}$. By~\eqref{sinaip} with $p = 2$,
\begin{equation}\label{0vc}%variance convergence
\sigma^2_{0}(n):=\text{Var}\crt{\frac{Z_n}{\sigma^2_0\log^2n}} \xrightarrow[]{} \sigma^2_V.
\end{equation}

\paragraph{Coupling with error term}
Consider a probability space $\prt{S,\mc{S},\mc{P}}$ that is rich enough to include the sequence of random variables  $(V_k)_{k \in \bb{N}_0}$ defined in Section~\ref{subsec:recfl} and an array of random variables $(R^{(k)}_n)_{k,n\in\N_0}$ satisfying:
\begin{itemize}
\item[(A1)] For any $k,n\in \bb{N_0}$ and $x \in \bb{R}$,
\begin{equation}\label{0eqdist}
P^{\mu}_0\left(\frac{Z_n - E^{\mu}_0\crt{Z_n}}{\sigma_{0}(n)}\leq x\right)
=  \mc{P}\left(\sigma_V^{-1}V_k + R^{(k)}_n \leq x\right).
\end{equation}
\item[(A2)] For all $k,n\in\N_0$, $\mc{E}[R^{(k)}_n]= 0$, where $\mc{E}$ stands for expectation w.r.t. $\mc{P}$. 
\item[(A3)] $(V_k,R^{(k)}_n)_{n,k\in \bb N_0}$ are independent in $k$ under $\mc{P}$.
\item[(A4)] $R^{(k)}_n$ vanishes in $L^2$, i.e.,
\begin{equation}\label{e:l2error}
\lim_{n\to\infty} \sup_{k\in\N_0} \mc{E}\left[\left(R^{(k)}_n\right)^2\right] = 0.
\end{equation}
\end{itemize}

\noindent
By~\eqref{e:c3} and~\eqref{0eqdist}, for any bounded continuous function $f$,
\begin{equation}\label{e:xeqdist}
\bb{E}\left[f\left(\mathfrak{X}_n - \bb{E}\crt{\mathfrak{X}_n}\right)\right]
=\mc{E}\left[f\left(V^{\otimes\lambda_{\tau,n}}+\sum_{k=0}^{\ell(n)-1}\lambda_{\tau,n}(k)R^{(k)}_{T_k}\right)\right],
\end{equation}
i.e., $\mathfrak{X}_n - \bb{E}[\mathfrak{X}_n]$ has the same distribution under $\bb{P}$ as the $\lambda_{\tau,n}$-mixture of Sinai-Kesten random variables defined in~\eqref{e:KSmix}, up to an error term that is negligible because of~\eqref{e:l2error}.

The proof proceeds in two parts. First, we remove the error term. Second, we examine the convergence of the main term. 
% Each part consists of several steps.

\paragraph{$\bullet$ Asymptotics of the error terms} 
As a consequence of (A2) and (A3),% we have
\begin{equation}
\begin{aligned}\label{e:0vanish}
&\lim_{J\to\infty}\limsup_{n\to\infty}\,\, 
\mc{E}\crt{\prt{ \sum_{k=0}^{\ell(n)-1}\lambda_{\tau,n}(k) R^{(k)}_{T_k}\Ind{\chv{T_k>J}}}^2}\\
&\qquad = \lim_{J\to\infty}\limsup_{n\to\infty} 
\sum_{k=0}^{\ell(n)-1}\lambda^2_{\tau,n}(k)\, \mc{E} \crt{\prt{R^{(k)}_{T_k}\Ind{\chv{T_k>J}}}^2}=0,
\end{aligned}
\end{equation}
where the last equality follows from~\eqref{e:l2error}. %and $\sum_{k=0}^{\ell(n)-1}\lambda^2_{\tau,n}(k) = 1$. 
For any fixed $J>0$, under $\bb{P}$, $(Y_k \Ind{\chv{T_k \leq J}})_{k\in\N_0}$ is a collection of bounded independent random variables. Thus, by the CLT for i.i.d.~random variables, for any bounded continuous function $f\colon\,\bb{R} \to \bb{R}$ we have
\begin{equation}\label{e:normalmass}
\begin{aligned}
\lim_{n\to \infty}
\Bigg\vert\bb{E}\Bigg[f\Bigg(&\sum_{k = 0}^{\ell(n)-1} \lambda_{\tau,n}(k)\,
\frac{Y_k - \bb{E}\crt{Y_k}}{\sigma_{0}(T_k)}
\Ind{\chv{T_k \leq J}}\Bigg) \Bigg]\\
&-\mc{E}\Bigg[f\Bigg(\Bigg(\sum_{k = 0}^{\ell(n)-1}\lambda^2_{\tau,n}(k)
\Ind{\chv{T_k \leq J}}\Bigg)^{\frac{1}{2}}\Phi\Bigg)\Bigg]\Bigg\vert = 0
\end{aligned}
\end{equation}
with $\Phi$ a standard normal random variable. In view of~\eqref{e:xeqdist}--\eqref{e:normalmass}, to prove Theorem~\ref{thm:rlpts}, it suffices to show that %if $(\lambda_n)_{n\in\N_0}$ is such that $\lambda_n = \lambda_{n,\tau}^{\downarrow}$
%%
%\begin{equation} \label{e:ptconv}
%\sum_{j\in\N_0} \lambda^2_n(j) = 1,
%\quad \lambda_n(j) \geq \lambda_n(j+1),
%\quad
%\lim_{n\to\infty} \lambda_{n}(j) = \lambda_*(j)
%\quad \forall\, j \in \N_0,
%\end{equation}
%%
%then
%%
\begin{equation}\label{e:kesmix}
V^{\otimes\lambda_{\tau,n}}\overset{(d)}{=}V^{\otimes\lambda^{\downarrow}_{n}}\tod V^{\otimes\lambda_*} + a(\lambda_*) \Phi,
\end{equation}
where the equality is due to Lemma~\ref{SKlemma}, whose proof is given in the sequel.% the ordering imposed in \eqref{e:ptconv} causes no loss. %of generality. 

\paragraph{Convergence of mixtures and removal of the error term}
We explain why \eqref{e:kesmix} suffices. Let $f\colon\,\bb{R}\to \bb{R}$ be such that 
$\|f\|_\infty<\infty$ and $\|f'\|_\infty<\infty$. Abbreviate
\begin{equation}\label{not_ease}
\arraycolsep=1.2pt\def\arraystretch{2}
\begin{array}{rlrl}     
\tilde{\mathfrak{X}}_n
&:= \mathfrak{X}_n - \bb{E}[\mathfrak{X}_n],
& \tilde{\mathcal{Y}}_k
&:=\frac{Y_k - \bb{E}\crt{Y_k}}{\sqrt{\var(Y_k)}}, \\
\overline{\lambda_n}^{\,0,J}(k)
&:= \lambda_{\tau,n}(k) \Ind{\chv{T_k <J}},
&\overline{\lambda_n}^{\,J,\infty}(k)
&:= \lambda_{\tau,n}(k)-\overline{\lambda_{n}}^{\,0,J}(k),\\
\overline{\tilde{\mathfrak{X}}}_n^{\,0,J}
&:= \sum_{k=0}^{\ell(n)-1} \overline{\lambda_{n}}^{\,0,J}(k)\tilde{\mathcal{Y}}_k,
&  \overline{\tilde{\mathfrak{X}}}_n^{\,J,\infty}
&:= \tilde{\mathfrak{X}}_n - \overline{\tilde{\mathfrak{X}}}_n^{\,0,J}, \\  
\overline{R_n}^{\,0,J}
&:=  \sum_{k=0}^{\ell(n)-1}\overline{\lambda_{n}}^{J,\infty}(k) R^{(k)}_{T_k},
&\overline{R_n}^{\,J,\infty} 
&:=  R_n -\overline{R_n}^{\,0,J},
 \end{array}
\end{equation}
and note that from (A1) and (A3) we have
\begin{equation}\label{EmcE}
\begin{aligned}
&\bb{E}\crt{f\left(\,\tilde{\mathfrak{X}}_n\right)}= 
\bb{E}\crt{f\Big(\,\overline{\tilde{\mathfrak{X}}}_n^{\,0,J} + \overline{\tilde{\mathfrak{X}}}_n^{\,J,\infty}\Big)} 
 \\
&=\mc{E}\crt{f\left(V^{\otimes\overline{\lambda_n}^{\,0,J}} + \overline{R_n}^{\,0,J}
+ V^{\otimes \overline{\lambda_n}^{\,J,\infty}} + \overline{R_n}^{\,J,\infty}\right)}.
\end{aligned}
\end{equation}
For fixed $J>0$, $\sup_{k\in\N_0}\overline{\lambda_n}^{\,0,J}(k) \to 0$, because if $T_k<J$, then the numerator in $\lambda_{\tau,n}(k)$ remains bounded while the denominator diverges (recall~\eqref{e:lfp}). Hence, by the Lindeberg-Feller theorem for triangular arrays \cite[Theorem 2.4.5]{Dur96},
\begin{equation}\label{LFVJ}
\lim_{n\to \infty}\abs{\mc{E}\crt{f\prt{V^{\otimes\overline{\lambda_n}^{\,0,J}}}} 
- \mc{E}\crt{f\prt{\norm{\overline{\lambda_n}^{\,0,J}}_{2} \Phi}}} = 0.
\end{equation}
Via (A1) and (A3), \eqref{e:normalmass} translates into
\begin{equation}\label{NormalwJ}
\lim_{n\to \infty}\abs{\mc{E}\crt{f\prt{V^{\otimes\overline{\lambda_n}^{\,0,J}} 
+ \overline{R_n}^{\,0,J}}} - \mc{E}\crt{f\prt{\norm{\overline{\lambda_n}^{\,0,J}}_{2} \Phi}}}
= 0.
\end{equation}
Combining \eqref{LFVJ} and \eqref{NormalwJ}, we get
\begin{equation}\label{limrep}
\begin{aligned}
&\lim_{n\to \infty} \bigg\vert\mc{E}\left[f\left(V^{\otimes\overline{\lambda_n}^{\,0,J}}
+ \overline{R_n}^{\,0,J} + V^{\otimes \overline{\lambda_n}^{\,J,\infty}}
+ \overline{R_n}^{\,J,\infty}\right)\right]\\
&\qquad\qquad\qquad - \mc{E}\crt{f\left(V^{\otimes\overline{\lambda_n}^{\,0,J}}
+ V^{\otimes \overline{\lambda_n}^{\,J,\infty}} + \overline{R_n}^{\,J,\infty}\right)}\bigg\vert = 0.
\end{aligned}
\end{equation}
Hence we can estimate
\begin{equation}\label{f0inlaw}
\begin{aligned}
&\limsup_{n\to \infty} \Big\vert\bb{E}
[f(\tilde{\mathfrak{X}}_n)] - \mc{E}\big[f\big(V^{\otimes \lambda_*} + a(\lambda_*)\Phi\big)\big] \Big\vert\\
&= \limsup_{n\to \infty} \abs{\bb{E}
\left[f\left(\overline{\tilde{\mathfrak{X}}}_n^{\,0,J}
+ \overline{\tilde{\mathfrak{X}}}_n^{\,J,\infty}\right)\right]
- \mc{E}[f(V^{\otimes \lambda_n})]}\\
& = \limsup_{n\to \infty}\Bigg\vert\mc{E}
\left[f\left(V^{\otimes\overline{\lambda_n}^{\,0,J}}
+ V^{\otimes \overline{\lambda_n}^{\,J,\infty}}
+ \overline{R_n}^{J,\infty}\right)\right] \\
&\qquad\qquad\qquad\qquad-\mc{E}\left[f\left(V^{\otimes\overline{\lambda_n}^{\,0,J}} + V^{\otimes \overline{\lambda_n}^{\,J,\infty}}\right)\right] \Bigg\vert\\
& \leq \inf_{\delta,J>0} \limsup_{n\to\infty} C_f\left(\delta + \delta^{-2}
\mc{E}\crt{\prt{\overline{R_n}^{\,J,\infty}}^2}\right),
\end{aligned}
\end{equation}
where $C_f$ is a constant that depends on $\|f\|_\infty, \|f'\|_\infty$. The first equality follows from~\eqref{e:kesmix}, the second from~\eqref{EmcE}--\eqref{limrep}, and the inequality from the following standard  bound, which we state for generic random variables $X$ and $H$: 
\begin{equation}\label{l2bound}
\begin{aligned}
&|\mc{E}\crt{f(X + H) - f(X)}| \\
&\leq  |\mc{E}\crt{(f(X+H) - f(X))\Ind{\chv{\abs{H} \leq \delta}}}|
\\
&\qquad + |\mc{E}\crt{(f(X+H) - f(X))\Ind{\chv{\abs{H}>\delta}}}|\\[0.2cm]
&\leq C_f \delta  + C_f \mc{P}\prt{{\abs{H}>\delta}} \leq C_f \delta + C_f \delta^{-2} \mc{E}[{\abs{H}^2}].
\end{aligned}
\end{equation}
%%
%where we use Chebyshev's inequality.
From~\eqref{e:0vanish},
%\[
$\mc{E}[(\overline{R_n}^{\,J,\infty})^2] \to 0$,
%\]
 and hence \eqref{f0inlaw} yields
\begin{equation}
\label{conv*}
\tilde{\mathfrak{X}}_n=\mathfrak{X}_n - \bb{E}[\mathfrak{X}_n] \tod V^{\otimes \lambda_*} + a(\lambda_*)\Phi,
\end{equation}
which is the claim in \eqref{lpoints0} with convergence in distribution.

\paragraph{$L^p$ convergence}
To prove the convergence in $L^p$, note that for any $r\in \N$,
\begin{equation} 
\label{S_2N}  
\begin{aligned}
&\bb{E}\crt{\tilde{\mathfrak{X}}_n^{2r}}=\mc{E}\crt{\prt{\sum_{k=0}^{\ell(n)} \lambda_{\tau,n}(k) \tilde{\mathcal{Y}}_k}^{2r}}%= \sum_{m=1}^{r}\sum_{\substack{e_1,\ldots,e_m\in\N\setminus\{1\} \\ 
%      e_1+\cdots+e_m=2r}} \binom{2r}{e_1 \cdots e_m}\\[0.2cm]
%  &\quad\times\sum_{\ell(n)\geq k_1>\ldots>k_m} \lambda_{\tau,n}^{e_1}(k_1) \times \cdots \times \lambda_{\tau,n}^{e_m}(k_m)
%\EE\crt{\tilde{\mathcal{Y}}_{k_1}^{e_1} \cdots  \tilde{\mathcal{Y}}_{k_m}^{e_m}}\\
%&\leq \sum_{m=1}^r  \sum_{\substack{e_1,\ldots,e_m\in\N\setminus\{1\}\\ 
%e_1+\cdots+e_m=2r}} (2r)^m \sum_{n\geq k_1>\ldots>k_m} 
%\lambda_{\tau,n}^{2}(k_1) \times \cdots \times \lambda_{\tau,n}^{2}(k_m)\,C_{2r}\\
&\leq C_{2r}<\infty,% \sum_{m=1}^r \binom{2(r-1)}{m-1} (2r)^m,
\end{aligned}
\end{equation}
where we use that $\|\lambda_{\tau,n}\|^2_2=1$ and that for all $k \in \N_0$, $\mc{E}[\tilde{\mathcal{Y}}_k] = 0$ and that,by \eqref{sinaip}, $\sup_k\mc{E}[(\tilde{\mathcal{Y}}_k)^{2r}]<C$. 
%\begin{equation}
%\sup_{n\in\N} E^\mu_0\left[\left(\frac{Z_n}{\log^2 n}\right)^{2r}\right] =: C_{2r} < \infty, \qquad r \in \N.
%\end{equation}
The convergence in distribution in \eqref{conv*}, combined with the uniform bound in \eqref{S_2N}, implies \eqref{lpoints0}.%the convergence in $L^p$.

\paragraph{Limit of Sinai-Kesten mixtures}
In order to prove Theorem~\ref{thm:rlpts}, it remains to show~\eqref{e:kesmix}. We divide this part of the proof into steps.

\paragraph{A triangle inequality}
Let $\lambda_n : = \lambda^{\downarrow}_{\tau,n}$.  Note that~\eqref{pointwise} ensures that
\begin{align}
&\sum_{j\in\N_0} \lambda^2_*(j)= 1-a^2 \text{ for some } a\geq 0,\label{e:c00}\\
&\lim_{K\to\infty} \sum_{j>K} \lambda_*^2(j) = 0, \label{e:c01}\\
&\lim_{K\to\infty} \sum_{j=1}^K \abs{\lambda_{n_i}(j)-\lambda_*(j)} = 0, \quad K \in\N, \label{e:c02}\\
&j \geq K \; \Longrightarrow \; \lambda_{n_i}(j) \leq \frac{1}{\sqrt{K}}, \quad K \in \N, \label{e:c03}
\end{align}
where~\eqref{e:c03} follows from $1 \geq \sum_{j=0}^{K-1}\lambda^2_{n_i}(j) \geq K \lambda^2_n(K)$. % because $\lambda_n$ is normalised and ordered. 
Let $(\Phi_j)_{j\in\N_0}$ be a family of i.i.d.\ standard normal random variables defined on the same probability space $\prt{S,\mc{S},\mc{P}}$. Set $\Phi^{\otimes\lambda} := \sum_{j\in\N_0} \lambda(j) \Phi_j$ for a given vector $\lambda\in\ell_2(\bb{N}_0)$, and note that the following isometry is in force (recall \eqref{e:KSmix}):
\begin{equation}\label{isometry}
\mc{E}\left[\abs{{V}^{\otimes\lambda_n}}^2\right]=  \mc{E}\left[\abs{{\Phi}^{\otimes\lambda_n}}^2\right]=\|{\lambda_n}\|_{\ell_2}.
\end{equation}
To prove~\eqref{e:kesmix}, we will show via a truncation that, for any $f\colon\,\R\to\R$ with bounded derivatives up to order three, $\max\chv{\norm{f}_\infty, \norm{f'}_{\infty}, \norm{f''}_{\infty},\norm{f'''}_{\infty}}<\infty,$
%, 
%%
\begin{equation} \label{e:sufclaim}%vector conditions
\begin{aligned}
 \mc{E}\crt{f({V}^{\otimes\lambda_n}) - f({V}^{\otimes\lambda_*} + a\Phi_0)} \to 0.
\end{aligned}
\end{equation}
Indeed, for $\lambda \in \ell_2(\N_0)$ and $k,K \in \N_0$ with $k<K$, set
\begin{equation} \label{e:trunc1}
\lambda^{k,K}(j) :=
\begin{cases}
0, & \text{ if } 0 \leq j < k,\\
\lambda(j), &\text{ if } k \leq j < K,\\
0, & \text{ if } j \geq K,
\end{cases}
\end{equation}
and $\lambda^{K,\infty}(j) := \lambda(j) - \lambda^{0,K}(j)$. By the triangle inequality, for all $K \in \N$,
\begin{equation}\label{e:tsplit}%truncation split
\begin{aligned}
&\abs{\mc{E}\crt{f({V}^{\otimes\lambda_n}) - f({V}^{\otimes\lambda_*} + a\Phi_0)}}\\
&\quad \leq \abs{\mc{E}\crt{f({V}^{\otimes\lambda_n})
- f\left({V}^{\otimes\lambda^{0,K}_n} + \Phi^{\otimes\lambda_n^{K,\infty}}\right)}}\\[.2cm]
&\quad \quad + \abs{\mc{E}\crt{f\left({V}^{\otimes\lambda_n^{0,K}} + \Phi^{\otimes\lambda_n^{K,\infty}}\right)
- f\left({V}^{\otimes\lambda_*^{0,K}}+ a\Phi_0\right)}}\\[.2cm]
%&\quad+ \abs{\mc{E}\crt{f\left({V}^{\otimes\lambda_n^{0,K}} + a\Phi_0\right)
%- f\left({V}^{\otimes\lambda_*^{0,K}} + a\Phi_0\right)}}\\[.2cm]
&\quad \quad+ \abs{\mc{E}\crt{f\left({V}^{\otimes\lambda_*^{0,K}} + a\Phi_0\right)
- f\left({V}^{\otimes\lambda_*} + a\Phi_0\right)}}.
\end{aligned}
\end{equation}
To conclude the proof, we will argue that the three terms in the right-hand side of~\eqref{e:tsplit} can be made arbitrarily small.%ends to zero as $K\to\infty$ and then $n \to \infty$.

\paragraph{Asymptotic negligibility of the last terms in the triangle inequality}
The last two terms in \eqref{e:tsplit} can be treated via~\eqref{l2bound}, by using~\eqref{e:c00}--\eqref{isometry}.  Indeed, the third term in the right-hand side of~\eqref{e:tsplit} tends to zero as $K \to \infty$ due to~\eqref{e:c01}. 
%The second term, for fixed $K$, tends to zero as $n \to \infty$ due to~\eqref{e:c02}. 
For the second term, note that, by~\eqref{e:c00},~\eqref{e:c02} and $\sum_{i\in\N_0} \lambda_n^2(i) = 1$,
\begin{equation} \label{e:L2ncr}% L2 norm convergence of the remainder
\lim_{K \to \infty} \lim_{n \to \infty}\norm{\lambda_n^{K,\infty}}_{2}
= \lim_{K\to\infty} \lim_{n\to\infty} \sum_{i>K} \lambda^2_n(i) = a^2.
\end{equation}
By~\eqref{l2bound}, %and~\eqref{e:L2ncr} 
we get for any $\delta>0$,
\begin{equation}
\begin{aligned}
&\bigg\vert\mc{E}\crt{f\left({V}^{\otimes\lambda_n^{0,K}}
+ \Phi^{\otimes\lambda_n^{K,\infty}}\right)} -  \mc{E}\crt{f\left({V}^{\otimes\lambda_*^{0,K}} + a\Phi_0\right)}\bigg\vert\\
&\quad=\bigg\vert\mc{E}\crt{f\left({V}^{\otimes\lambda_n^{0,K}}
+\norm{\lambda_n^{K,\infty}}_{2}^{\frac12}\Phi_0\right)}
-\mc{E}\crt{f\left({V}^{\otimes\lambda_*^{0,K}} + a\Phi_0\right)}\bigg\vert\\
&\quad\leq C_f \delta + C_f \delta^{-2} \; \mc{E}\bigg[\abs{\left(a - \norm{\lambda_n^{K,\infty}}_2^2\right)\Phi + V^{\lambda^{0,K}_n} - V^{\lambda^{0,K}_*}}\bigg]
\end{aligned}
\end{equation}
and therefore, by~\eqref{e:c02} and~\eqref{e:L2ncr}, the second term vanishes as one takes $K \to \infty$ and then $n\to \infty$.
%\begin{equation}
%\lim_{K\to \infty}\lim_{n\to \infty}\bigg\vert\mc{E}\crt{f\left({V}^{\otimes\lambda_n^{0,K}}
%+ \Phi^{\otimes\lambda_n^{K,\infty}}\right)} -  \mc{E}\crt{f\left({V}^{\otimes\lambda_n^{0,K}} + a\Phi_0\right)}\bigg\vert = 0.
%\end{equation}  
To show that the first term in the right-hand side of~\eqref{e:tsplit} vanishes as well, we prove a bound that is independent of $n$ by using a classical argument in the spirit of the Lindeberg-Feller theorem (see~\cite[Theorem 2.4.5]{Dur96}).

\paragraph{Interpolation of random variables}
We consider
\begin{equation}
W_{K,n}(M) := V^{\otimes\lambda_n^{0,K\wedge M}} + \Phi^{\otimes\lambda_n^{K,M}} 
+ {V}^{\otimes\lambda_n^{M,\infty}}
\end{equation}
obtained from $V^{\otimes\lambda_n}$ after replacing $\sigma_V^{-1}{V}_j$ by $\Phi_j$ for $K< j \leq M$ in \eqref{e:KSmix}. Note that, by~\eqref{e:trunc1}, $W_{K,n}(M) = {V}^{\otimes\lambda_n}$ for $M \leq K$, and also that, for fixed  $K, n \in \N_0$, $W_{K,n}(M) \todtwo   W_{K,n}(\infty):={V}^{\otimes\lambda^{0,K}_n} + \Phi^{\otimes\lambda_n^{K,\infty}}$% in $L^2$ as $M \to \infty$
. 
With these auxiliary random variables, we see that in order to show that the first term in the right-hand side of~\eqref{e:tsplit} vanishes, we must prove that
\begin{equation}\label{Wconverge}
\limsup_{K\to \infty} \limsup_{n\to \infty} \Big|\mc{E}\big[f\big(W_{K,n}(K)\big) - f\big(W_{K,n}(\infty)\big)\big]\Big| = 0.
\end{equation}
We will show that
\begin{equation}\label{e:sufconv}
\limsup_{K\to \infty}\limsup_{n\to \infty}  \sum_{M>K} 
\Big|\mc{E}\big[f\big(W_{K,n}(M)\big) - f\big(W_{K,n}(M+1)\big)\Big| = 0,
\end{equation}
which in particular implies \eqref{Wconverge}. 

\paragraph{Bound by Taylor expansion}
For the proof of \eqref{e:sufconv} %we argue as follows. 
define, for $M\geq K$,
\begin{equation}\label{Wstar1}
W_{K,n}^*(M) := W_{K,n}(M) - \sigma_V^{-1}\lambda_n(M)V_{M} .
\end{equation}
Note that  $W_{K,n}^*(M)$ is independent of $\Phi_{M}$ and ${V}_{M}$, and that
\begin{equation}\label{Wstar2}
W_{K,n}(M+1)   = W_{K,n}^*(M) + \lambda_n(M) \Phi_{M}
\end{equation}
Consider the Taylor expansion of $f$ up to second order,
\begin{equation} \label{taylorLindeberg}
\begin{aligned}
f(x + h) = f(x) + f'(x)h + \frac12 f''(x)h^2 + C_f (\abs{h}^2\wedge \abs{h}^3). 
\end{aligned}
\end{equation}
Note that, for any $\gep>0$, $\abs{h}^2\wedge \abs{h}^3\leq \abs{h}^2\Ind{\{\abs{h}>\gep\}}+ \abs{h}^3$, and that for $j \in \N_0$, 
\begin{equation}\label{meanvar}
\mc{E}[\Phi_j] = 0, \quad \mc{E}[{V}_j] = 0, \quad \mc{E}\crt{\Phi_j^2} = \mc{E}\crt{\prt{\sigma_V^{-1}V_j}^2} = 1.
\end{equation}
Use \eqref{Wstar1} and \eqref{Wstar2}, respectively, to expand $f(W_{K,n}(M)) -f(W_{K,n}(M+1))$ with the help of~\eqref{taylorLindeberg}, which together with the triangle inequality yield
\begin{equation}\label{end}
\begin{aligned}
&\left|\mc{E}\left[f\left(W_{K,n}(M)\right) - f\left(W_{K,n}(M+1)\right)\right]\right|\\
&\leq C_f \left( \mc{E}\crt{\abs{\lambda_n(M) {V}_{M}}^3 + \abs{\lambda_n(M) \Phi_{M}}^3}\right.\\
&\quad + \mc{E} \left[ \abs{\lambda_n(M) \sigma_V^{-1}{V}_{M}}^2
\Ind{\left\{\abs{\lambda_n(M) \sigma_V^{-1}V_{M}}^2>\gep\right\}}
\right.\\
&\qquad\qquad\qquad\left.\left.+ \abs{\lambda_n(M) \Phi_{M}}^2 \Ind{\left\{\abs{\lambda_n(M) \Phi_{M}}^2>\gep\right\}} \right] \right).
\end{aligned}
\end{equation}
Next, note that H\"older's inequality and Markov's inequality imply that 
\begin{equation} \label{Unifbound}
\begin{aligned}
 \mc{E} &\crt{\abs{\lambda_n(M) \sigma_V^{-1}{V}_{M}}^2
\Ind{\left\{\abs{\lambda_n(M)\sigma_V^{-1} V_{M}}^2>\gep\right\}}}\\
&\leq  \lambda_n(M)^2 \,
\mc{E}\crt{\abs{\sigma_V^{-1}V_M}^4}^{\frac{1}{2}} \mc{P}\prt{{\abs{\lambda(M)\sigma_V^{-1}V_{M}}^2>\gep }}^{\frac{1}{2}}\\
&\leq \lambda_n(M)^2 \mc{E}\crt{\abs{\sigma_V^{-1}V_1}^4}^{\frac{1}{2}} 
\frac{\lambda(M)\mc{E}\crt{\abs{\sigma_V^{-1}V_1}^2}^{\frac{1}{2}}}{\sqrt{\gep}}%\\
\\
&\leq \lambda_n(M)^3\frac{\mc{E}\crt{\abs{\sigma_V^{-1}V_1}^4}^{\frac{3}{4}}}{\sqrt{\gep}}.
\end{aligned}
\end{equation}
Since $\mc{E}[\abs{V_1}^4]<\infty$, \eqref{e:sufconv} follows from~\eqref{end} and \eqref{Unifbound} via an analogous argument as for the terms involving $\Phi_M$, because, for some $C>0$ independent of $K$ and $n$,
\begin{equation} \label{Unifbound2}
\begin{aligned}
&\sum_{M>K} \Bigg(\mc{E} \crt{\abs{\lambda_n(M)\sigma_V^{-1} {V}_{M}}^2
\Ind{\left\{\abs{\lambda_n(M) V_{M}}^2>\gep\right\}}}\\
& \qquad\qquad\qquad\qquad\qquad+ \mc{E}\crt{\abs{\lambda_n(M) \sigma_V^{-1}{V}_{M}}^3}\Bigg)\\
&\qquad \leq C \sum_{M>K} \lambda_n^3(M)
\leq C \sup_{M>K}\lambda_n(M) \sum_{M>K}\lambda_n^2(M) \leq C \frac{1}{\sqrt{K}},
\end{aligned}
\end{equation}
where the last inequality follows from~\eqref{e:c03} and $\sum_{M>K}\lambda_n^2(M)\leq 1$. 
%\end{proof}
%%
\QED
%%%%%%%%%%%%%%%%%%%%%%%%%%%%%%%%%%%%%%%%%%%%%%%%%%%%%%%%%%%%
\subsection{Characterisation of Sinai-Kesten mixtures}\label{SKmix}

%We prove Lemma~\ref{SKlemma}.

%\begin{proof}
\Proof{Proof of Lemma~\ref{SKlemma}}
To prove Lemma~\ref{SKlemma}, it is equivalent to  prove 
\begin{align}
\lambda \sim \lambda' \quad &\Longrightarrow \quad V^{\otimes \lambda}\overset{(d)}{=}V^{\otimes \lambda'},\label{suf}\\
[\lambda] \neq [\lambda'] \quad &\Longrightarrow \quad V^{\otimes \lambda}\overset{(d)}{\neq}V^{\otimes \lambda'}.\label{nec}
\end{align}

\paragraph{Proof of \eqref{suf}}
Let $\lambda$ be a vector with finitely many non-zero entries. In view of the i.i.d.\ property of the random variables $\prt{V_j}_{j\in \N_0}$, we have that
\begin{equation}\label{finiteeq}
\lambda\sim\lambda'  \quad \Longrightarrow \quad V^{\otimes \lambda}\overset{(d)}{=}V^{\otimes \lambda'}.
\end{equation}
For general $\lambda \in \ell_2(\N_0)$, let $\sigma,\sigma'\colon\,\N_0 \in \N_0$ be such that 
%\begin{equation}\label{sigmas}
$\lambda\sigma(i) = \lambda^{\downarrow}(i)$, and $\lambda'\sigma'(i) = \lambda^{\downarrow}(i)$.
%\end{equation}
Define 
\begin{equation}\label{sigmat}
\lambda^{\sigma, 0,k}(j) = \begin{cases}
\lambda(j), &\text{ if } j \in \chv{\sigma(i)\colon i <k},\\
0, & \text{ else.}
\end{cases}
\end{equation}
As in \eqref{finiteeq},
%\begin{equation}\label{feqdist}
$V^{\otimes \lambda^{\sigma, 0,k}}\overset{(d)}{=}V^{\otimes \lambda'^{\sigma', 0,k}}$.
%\end{equation}
By~\eqref{l2bound}, for any $\delta >0$,
\begin{equation}\label{ffull}
\left\vert\mc{E} \crt{f(V^{\lambda^{\sigma, 0,k}})} - \mc{E}\crt{f(V^{\lambda})}\right\vert \leq C_f \delta + C_f \delta^{-2}\norm{\lambda -  \lambda^{\sigma, 0,k}}_2^2.% \mc{E} \crt{V^2}.
\end{equation}
Since $\norm{\lambda^{\sigma, 0,k}}_2 \to \norm{\lambda}_2$, the claim follows.

\paragraph{Proof of \eqref{nec}}
We may assume without loss of generality that
%%
%\begin{equation}\label{hipla}
$\lambda = \lambda^{\downarrow}$, $\lambda'= \lambda'^{\downarrow}$ %\geq \lambda(j+1), \quad \lambda'(j)\geq \lambda'(j+1),%\quad \lambda \neq \lambda',
%\end{equation}
%%
and that there is a $j_0\in \bb{N_0}$ for which
\begin{equation}\label{e:step2eqdist}
\lambda(j)= \lambda'(j) \quad \forall\,0 \leq j < j_0, \qquad \lambda(j_0)>\lambda'(j_0).
\end{equation}
Let $t \mapsto \mc{L}_X(t):=\mc{E}[e^{tX}]$ be the moment generating function of a random variable $X$.
%\[
%\] 
To show that the distributions of $V^{\otimes\lambda}$ and $V^{\otimes\lambda'}$ are different, by \cite[Theorem 30.1]{Bil95} we must show that the moment generating function of $V^{\otimes \lambda}$ is finite in a neighbourhood of the origin and
\begin{equation}\label{Laplaceneq}
\exists\, t \in \bb{R}\colon \mc{L}_{V^{\otimes\lambda}}(t) \neq \mc{L}_{V^{\otimes\lambda'}}(t).
\end{equation}
The proof proceeds in three steps. First, we analyse the Laplace transform of $V^{\otimes\lambda}$ for general $\lambda \in \ell_2(\N_0)$. Second, we prove~\eqref{Laplaceneq} when $j_0= 0$ in~\eqref{e:step2eqdist}. Third, we show~\eqref{Laplaceneq} when $j_0> 0$ by reducing it to the case $j_0=0$.
   
\paragraph{Laplace transform of $V^{\otimes \lambda}$}  
Abbreviate $f(t):=\mc{L}_{\sigma_V^{-1}V_1}(t)$ and note that
\begin{equation}\label{Laplace}
\begin{aligned}
\mc{L}_{V^{\otimes\lambda} }(t)
&:=\mc{E}\crt{e^{t V^{\otimes\lambda} }}
= \prod_{j\in\N_0} \mc{E}\crt{e^{t \lambda(j)\sigma_V^{-1} V_j }}
= \prod_{j\in\N_0} f(\lambda(j) t).
\end{aligned}
\end{equation}
By~\eqref{densityofV},
\begin{equation}\label{Lbehave}
\begin{aligned} 
\abs{t}<\tfrac18 \pi^2\sigma_V &\quad \Longrightarrow \quad \abs{f(t)}<\infty,\\
t \to \tfrac18 \pi^2 \sigma_V &\quad \Longrightarrow \quad f(t) \to \infty.
\end{aligned} 
\end{equation}
Furthermore, by Morera's theorem~\cite[Theorem 5.1]{SteSha10}, $t\mapsto f(t)$ is holomorphic on the open disk
\begin{equation}\label{radius}
B := \left\{t\in \mathbb{C}\colon \abs{t}<\tfrac18 \pi^2\sigma_V\right\}.
\end{equation}
Therefore Taylor expansion of $f$ on $B$ around $0$ gives that
\begin{equation}\label{Maclaurin}
f(t) = 1 + \tfrac12t^2 + t^4 g(t),
\end{equation} 
with $g$ a holomorphic function on $B$. From~\cite[Proposition 3.2]{SteSha10}, the finiteness of the $\ell_2$-norm of $\lambda$, and~\eqref{Maclaurin}, we deduce that $t\mapsto \mc{L}_{V^{\otimes \lambda}}(t)$ is holomorphic on the open disk
\begin{equation}\label{radiuslambda}
B(\lambda) := \left\{t\in \mathbb{C}\colon \abs{t}<\tfrac{\pi^2\sigma_V}{8\lambda(0)}\right\}.
\end{equation}

\paragraph{Case $j_0 = 0$} 
From~\eqref{Laplace} and~\eqref{Lbehave},
%\begin{equation}
$\mc{L}_{V^{\otimes\lambda} }(t) \to \infty$ as  $ t \to \frac{\pi^2\sigma_V}{8\lambda(0)}$,
%\end{equation}
while,  $\lambda(0)>\lambda'(0)$ implies $B(\lambda) \subsetneq B(\lambda')$,  
\begin{equation}
\sup_{t \in B(\lambda)} \abs{\mc{L}_{V^{\otimes\lambda'}}(t)} < \infty.
\end{equation}
%%
%Therefore there exists a $t<\frac{\pi^2}{8\lambda(0)}$ such that $\abs{\mc{L}_{V^{\otimes\lambda} }(t) } \neq  \abs{\mc{L}_{V^{\otimes\lambda'} }(t)}$, 
from which~\eqref{Laplaceneq} follows.

\paragraph{Case $j_0 \in \N$}  
Recall the notation in \eqref{e:trunc1}. By \eqref{e:step2eqdist}, we have $\lambda^{0,j_0} =\lambda'^{0,j_0}$. Suppose that 
\begin{equation}
\label{suppose}
V^{\otimes\lambda} \overset{(d)}{=}V^{\otimes\lambda'}.
\end{equation} 
Since $V^{\otimes\lambda} = V^{\otimes\lambda^{0,j_0}} + V^{\otimes\lambda^{j_0,\infty}}$ \text{and} $V^{\otimes\lambda'} = V^{\otimes\lambda^{0,j_0}} + V^{\otimes\lambda'^{j_0,\infty}}$, taking the Laplace transform of both random variables and using the independence, we get that
\begin{equation}
V^{\otimes\lambda^{j_0,\infty}} \overset{(d)}{=}V^{\otimes\lambda'^{j_0,\infty}},
\end{equation}
which is a contradiction.% impossible because of Lemma~\ref{SKlemma} and~\eqref{e:step2eqdist}.
%\end{proof}
\QED
%%%%%%%%%%%%%%%%%%%%%%%%%%%%%%
\subsection{Identification of the limit points}

%We prove Corollary~\ref{cor:rRecThree}(a)--(b).

%\begin{proof}
%$\mbox{}$
\Proof{Proof of Corollary~\ref{cor:rRecThree}{\rm (a)--(b)}}	
%\medskip\noindent
(a): To prove the necessity of the condition on $\lambda_{\tau,\tau(k)}(k)$, suppose that
%%
%\begin{equation}
$\limsup_{k\to\infty}\lambda_{\tau,\tau(k)}(k) =c>0$,
%\end{equation}
%%
and take a subsequence $\prt{k_i}_{i\in \N}$ such that
%%
%\begin{equation}
$\lambda_{\tau,\tau(k_i)}(k_i) \to c$ and 
%\qquad 
$\lambda^{0\downarrow}_{\tau,\tau(k_i)}(j) \to \lambda_*(j) %\quad \forall \,j \in \N_0
$
%\end{equation}
%%
for any $j \in \N_0$ and for some $ \lambda_*\in \ell_2(\N_0)$. Next, take a subsequence $\prt{n_i}_{i\in \N}$ for which
\begin{equation}\label{bbig}
\lim_{i\to\infty} \lambda_{\tau,n_i}(0) > \frac{c}{2} \quad \text{and} \quad \lim_{i\to\infty} \lambda^{0\downarrow}_{\tau,n_i}(j) = \lambda'_*(j)
\quad \forall \,j \in \N_0
\end{equation}
for some $ \lambda'_*\in \ell_2(\N_0)$. By Theorem \ref{thm:rlpts},
%%
%\begin{equation}
$\mathfrak{X}_{\tau(k_i)} - \bb{E}[\mathfrak{X}_{\tau(k_i)}] \tod V^{\otimes \lambda_*}$, and %\qquad
$\mathfrak{X}_{n_i} - \bb{E}[\mathfrak{X}_{n_i}] \tod V^{\otimes \lambda'_*}$.
%\end{equation}
%%
To conclude the proof it suffices to show that  $V^{\otimes \lambda_*}$ and $V^{\otimes \lambda_*}$ have different distributions. But this follows from Lemma~\ref{SKlemma}, for which we argue next that $[\lambda'_*]\neq[\lambda_*]$.
	
Note that, for any $\gep >0$ and for $n_i$ large enough, 
%\begin{equation}\label{supbound}
$\sup_j \lambda^2_{\tau,n_i}(j) < c^2+ \gep$.
%\end{equation}
Furthermore, by~\eqref{bbig}, for $n_i$ large enough,
\begin{equation}\label{varratio}	
\frac{\var(X_{\tau(\ell(n_i)-1)})}{\var(X_{n_i})} \leq 1 - \frac{c^2}{4}.
\end{equation}
Therefore, % combining~\eqref{bbig} and~\eqref{supbound} we get that for $n_i$ large enough and
for  $\gep < \frac{c^4}{4-c^2}$,
\begin{equation}
\begin{aligned}
\sup_{j \in \N} \lambda'^2_{\tau,n_i}(j) &= \sup_{j \in \N} \frac{\var(X_{\tau(\ell(n_i)-1)})}{\var(X_{n_i})}\,
\lambda^2_{\tau,\tau(\ell(n_i)-1)}(j)\\
 & \leq \left(1 - \frac{c^2}{4}\right)(c^2 + \gep )< c^2,
\end{aligned}
\end{equation}
and therefore $\sup_{i\in \N_0}  \lambda'_*(i)< \sup_{i\in \N_0} \lambda_*(i)$. For the reverse implication, by Theorem~\ref{thm:rlpts}, it suffices to show that $\lambda_{\tau,\tau(k)}(k) \to 0$ implies that for all $i \in \N_0$,
%%
%\begin{equation}\label{convgoal}
$\lim_{n\to \infty} \lambda^{0\downarrow}_n(i)=  0$. %\, i \in \bb{N}_0$.
%\end{equation}
%%
To prove this, we will show that
\begin{equation}\label{e:incrementcontrol}
\lim_{n\to\infty}\sup_{i\in \N_0}\lambda_{\tau,n}(i) =0.
\end{equation}
%%
%To see~\eqref{e:incrementcontrol}, 
Note that for  $i >0$,
\begin{equation}\label{ei}
\lim_{n\to\infty}\lambda_{\tau,n}(i) = 0,\qquad\sup_{n\in\N} \lambda_{\tau,n}(i) \leq \lambda_{\tau,\tau(i)}(i).
\end{equation}
By~\eqref{ei}, it follows that
\begin{equation}
\begin{aligned}
\lim_{n\to \infty}\sup_{i\in \N}\lambda_{\tau,n}(i) &\leq \lim_{J\to \infty}\lim_{n\to \infty} \sup_{i>J} \lambda_{\tau,n}(i)\\
&\leq\lim_{J\to \infty} \sup_{k>J} \lambda_{\tau,\tau(k)}(k) = \limsup_{k\to \infty}\lambda_{\tau,\tau(k)}(k) =0.
\end{aligned}
\end{equation}
As to $i = 0$, define $s_n= \sigma^2_{0}(\bar{T}^n)/\sigma^2_{0}(T_{\ell(n)})$ and note that, by~\eqref{0vc}, there is a constant $C\in (1,\infty)$ such that $s_{n} < C$. Therefore, since $\bar{T}^n \leq T_{\ell(n)}$ and $x \mapsto \frac{x}{x+y}$ is increasing on $\R_+$ for $y>0$,
%replacing $\bar{T}^n$ by 
we get
\begin{equation}\label{bscontrol}
\begin{aligned}
\lambda^2_{\tau,n}(0)  &= \frac{\sigma^2_{0}(\bar{T}^n)\log^2 \bar{T}^n  }
{ \var(X_{\tau(k)}) + \sigma^2_{0}(\bar{T}^n)\log^2 \bar{T}^n} 
%\\
%&\leq \frac{C\sigma^2_{0}(T_{\ell(n)})\log^2 T_{\ell(n)}  }{ \var(X_{\tau(k)}) + C\sigma^2_{0}(T_{\ell(n)})\log^2 T_{\ell(n)}}
%\\
&\leq C \lambda_{\tau,\tau(\ell(n))}(\ell(n))\xrightarrow[]{}0,
\end{aligned}
\end{equation}
from which \eqref{e:incrementcontrol} follows.

\noindent (b) As in the proof of (a), we examine the sequence of vectors $(\lambda^{0\downarrow}_{\tau,\tau(k)})_{k \in \N}$ and prove that it converges to $\lambda_{q}$ with
%%
%\begin{equation}
$q = \lim_{k\to\infty} \lambda_{\tau,\tau(k)}(k)$.
%\end{equation}
%%
Abbreviate $q_k := \lambda_{\tau,\tau(k)}(k) $ and note that
%%
%\begin{equation}\label{vsum}
$(1 - q_k)^2\var (Y_k) = q_k^2 \sum_{i=1}^{k-1} \var (Y_i)$.
%\end{equation}
%%
Adding $(1-q_k^2) \sum_{i=1}^{k-1} \var (Y_i)$ on both sides%~\eqref{vsum}
, we get
\begin{equation}\label{indq}
\left(1 - q_k^2\right)\sum_{i=1}^k \var (Y_i) =  \sum_{i=1}^{k-1} \var (Y_i).
\end{equation}
Since, $\var (Y_{k-j}) = q_{k-j}^2 \sum_{i=1}^{k-j} \var (Y_i)$, recursively applying~\eqref{indq}, yields to
%%
%\begin{equation} \label{double}
%\begin{aligned}
$\var (Y_{k-j}) %&= q_{k-j}^2 \sum_{i=1}^{k-j} \var (Y_i) = q_{k-j}^2 (1 - q_{k-j+1}^2) \sum_{i=1}^{k-j+1} \var (Y_i)\\
%&= \ldots 
= q_{k-j}^2 \prod_{i=1}^{j} (1-q_{k-j+i}^2) \sum_{i=1}^k \var (Y_i)$,
%\end{aligned}
%\end{equation}
%%
which implies that
\begin{equation}
\lambda^2_{\tau,\tau(k)}(k-j) = \frac{\var (Y_{k-j})}{\var (X_{\tau(k)})} = q_{k-j}^2 \prod_{i=1}^{j} \left(1-q_{k-j+i}^2\right).
\end{equation}
For any $k \in \N$, $\lambda_{\tau,\tau(k)}(0) = 0$. As for $j \in \N$, since 
%\[
$q_k \to q >0$,
%\]
%%
\begin{equation}\label{e:bpure}
\lim_{k\to \infty} \left(\lambda^{0\downarrow}_{\tau,\tau(k)}(j)\right)^2=\lim_{k\to\infty}\lambda^2_{\tau,\tau(k)}(k-j+1)
= q^2 \left(1 - q^2\right)^{j-1},
\end{equation}
and since $\norm{\lambda_q}_{2} = 1$, the first part of (b) follows from a direct application of Theorem~\ref{thm:rlpts}. As to the second part of (b), if $\lambda_{\tau,n_i}(0) \to w$, then
\begin{equation}\label{e:boundweight}
1 - w^2= \lim_{k\to\infty}  \frac{\var(X_{\tau(\ell(n_k)-1)})}{\var (X_{n_k})}.
\end{equation}
From~\eqref{e:bpure} and~\eqref{e:boundweight}, for any $i \in \N$,
\begin{equation}\label{e:bi}
\begin{aligned}
\lim_{i\to\infty} \left(\lambda^{0\downarrow}_{\tau,n_k}(j)\right)^2 &= \lim_{i \to \infty} \lambda^2_{n_i,\tau}(\ell(n_i)-j)\\
&=\lim_{i\to\infty}\frac{\var\prt{X_{\tau(\ell(n_i)-1)}}}{\var (X_{n_i})} \, \lambda^2_{\tau,\tau(\ell(n_i)-1)}(\ell(n_i)-j)\\
&=\left(1-w^2\right)q^2\left(1 - q^2\right)^{i-1}.
\end{aligned}
\end{equation}
Then, By Theorem~\ref{thm:rlpts}, %%
%\begin{equation}
$V^{\otimes{\lambda}_{n_i,\tau}} \tod w\,\sigma_V^{-1} V_0
+ (1 - w^2)^{\frac12} V^{\otimes \lambda_q}$.
%\end{equation}
%%
%\end{proof}
\QED
%%%%%%%%%%%%%%%%%%%%%%%%%%%%%%%%%%%%%%%%%%%%%%%%%%%%%%%
\subsection{Divergence of cooling maps and crossovers}\label{ss:threeexamples}

In this section we examine the different classes of cooling maps presented
in Section \ref{4examples} and identify the corresponding limit laws.

\begin{proof}
We treat the different examples one by one.
	
\paragraph{{\bf (Ex.3)} Polynomial cooling}
When $k^{-\beta} T_k  \to B$, we have
\begin{equation}\label{npoly}
\lim_{n\to\infty} \frac{n}{B\ell(n)^{\beta+1}} = 1.
\end{equation}
Furthermore, by~\eqref{0vc} we get that
\begin{equation}\label{polypiece}
\lim_{k\to\infty}\frac{\var (Y_k)}{\sigma_V^2\sigma_0^4 \log^4(B k^\beta)} = 1.
\end{equation}
Since $\tfrac{\sum_{k=1}^\ell\log^4(B k^\beta)}{\beta^4 \ell \log^4 \ell} \to 1$, it follows that
\begin{equation}\label{polyvark}
\begin{aligned}
\lim_{\ell\to\infty}\frac{\var (X_{\tau(\ell)})}{\sum_{k = 1}^{\ell}\sigma_V^2\sigma_0^4 \log^4(B k^\beta)}
=\lim_{\ell\to \infty} \frac{\var (X_{\tau(\ell)})}{\sigma_V^2\sigma_0^4 \beta^4\ell\log^4\ell}=1.
\end{aligned}
\end{equation}
%%
%From~\eqref{polypiece} and \eqref{polyvark} 
It follows that
%%
%\begin{equation}\label{polygauss}
$\lambda^2_{\tau,\tau(\ell)}(\ell) \to 0$,
%and  
% \qquad 
$\tfrac{\var (X_n)}{\var(X_{\tau(\ell(n)-1)})} \to 1$,
%\end{equation}
%%
and by~\eqref{polyvark}
%Combining this with~\eqref{polyvark} we obtain that 
%%
\begin{equation}\label{polyvarn}
\begin{aligned}
%\lim_{n\to\infty}\frac{\var (X_n)}{\sum_{i = 1}^{\ell(n)-1}\sigma_V^2\sigma_0^4 \log^4(B i^\beta)}
%=
\frac{\var (X_{n})}{\sigma_V^2\sigma_0^4 (\frac{\beta}{\beta+1})^4
\prt{\frac{n}{B}}^{\frac{1}{\beta + 1}} \log^4n}\to 1.
\end{aligned}
\end{equation}
Finally, note that~\eqref{polyvarn} implies
\begin{equation}\label{inlawpoly}
\frac{X_n-\bb{E}[X_n]}{\sigma_0^2 n^{\frac{1}{2(\beta + 1)}} \log^2 n}
= \alpha_n \big(\mathfrak{X}_n-\bb{E}[\mathfrak{X}_n]\big), \end{equation}
 with $\alpha_n \to (\tfrac{\beta}{\beta+1})^2 B^{-\frac{1}{2(\beta + 1)}}$.
By %From~\eqref{polygauss},~\eqref{inlawpoly} and 
Corollary~\eqref{cor:rRecThree}(a), it follows that
\begin{equation}
\frac{X_n-\bb{E}[X_n]}{\sigma_0^2 n^{\frac{1}{\beta + 1}}
\log^2n} \todp \prt{\frac{\beta}{\beta+1}}^2 B^{-\frac{1}{2\beta+1}}\Phi.
\end{equation}

\paragraph{{\bf (Ex.4)} Exponential cooling}
When
%The condition 
$k^{-1}\log T_k \to c \in (0,\infty)$, 
%implies that
%%
\begin{equation}\label{eg1pre}
\ell^{-1}\log \tau(\ell) \to c.
\end{equation}
Since $\tfrac{\sum_{k = 1}^\ell k^4}{\ell^5}   \to \tfrac15$, via~\eqref{0vc} %and~\eqref{eg1pre} 
it follows that
\begin{equation}\label{asymp1}
\begin{aligned}
\lim_{\ell\to\infty}\frac{\var (X_{\tau(\ell)})}{\sum_{k = 1}^{\ell}\sigma_V^2\sigma_0^4k^4}
=\lim_{\ell\to \infty} \frac{\var (X_{\tau(\ell)})}{\sigma_V^2\sigma_0^45^{-1}c^{-5}\log^5 \tau(\ell)}=1,
\end{aligned}
\end{equation}
that
%From~\eqref{asymp1} 
%%
%\begin{equation}\label{eg1pos}
$ \lambda^2_{\tau,\tau(\ell)}(\ell) \to 0$, 
%\qquad 
and that $\tfrac{\var (X_{\tau(\ell(n)-1)})}{\var (X_n)} \to 1$.
%\end{equation}
%%
From~\eqref{sufficient_fast} we obtain that 
%%
%\begin{equation}
%\label{vanishmean1}
$\bb{E}[\mathfrak{X}_n] \to 0$.
%\end{equation}
%%
Finally, note that %these estimates%~\eqref{asymp1} and~\eqref{vanishmean1} 
%imply
%%
\begin{equation}\label{inlaw1}
\frac{X_n}{\frac{1}{\sqrt{5c^5}}\sigma_V\sigma_0^2\log^{\frac{5}{2}}n}
= \alpha_n \big(\mathfrak{X}_n-\bb{E}[\mathfrak{X}_n]\big)
+ \beta_n 
\end{equation}
with $\alpha_n \to 1$,  and $\beta_n \to 0$.
By%~\eqref{eg1pos},
~\eqref{inlaw1} and Corollary~\eqref{cor:rRecThree}(a), %it follows that
\begin{equation}
\frac{X_n}{\frac{1}{\sqrt{5c^5}}\sigma_V\sigma_0^2\log^{\frac{5}{2}}n} \todp \Phi.
\end{equation}

\paragraph{{\bf (Ex.5)} Double exponential cooling}
When
%The condition 
$k^{-1}\log\log T_k \to c \in (0,\infty)$, %implies that
\begin{equation}\label{eg2pre}
\frac{\tau(\ell)}{T_\ell} \to 1, \qquad
\quad \frac{\sum_{k = 1}^\ell\log^4T_k }{\sum_{k = 1}^\ell e^{4ck}} \to 1.
\end{equation}
From~\eqref{0vc} it follows that
%%
%\begin{equation}\label{asymp2}
$\tfrac{  \var (X_{\tau(\ell)})}{\sigma_V^2\sigma_0^4e^{4c\ell}(1 - e^{-4c})^{-1}} \to 1$
%\end{equation}
%%
and therefore
\begin{equation}
\label{eg2pos}
\lambda^2_{\tau,\tau(\ell)}(\ell) \to \tfrac{e^{4c}-1}{e^{4c}} = q_c^2.
\end{equation}
Note that
\begin{equation}
\label{vardouble}
\lim_{\ell\to\infty} \frac{\var (X_{\tau(\ell)})}{\sigma_V^2\sigma_0^4\log^4\tau(\ell)}
= \lim_{\ell\to\infty} \frac{ \sum_{k = 1}^\ell \log^4 T_k}{\log^4 T_\ell}\frac{\log^4 T_\ell}{\log^4\tau(\ell)} = q_c^{-2}.
\end{equation}
Combining \eqref{eg2pos} and \eqref{vardouble} with Corollary~\ref{cor:rRecThree}(b), we conclude that
\begin{equation}\label{inlaw2}
\begin{aligned}
\frac{X_{\tau(\ell)}}{\sigma_0^2\log^2\tau(\ell)} 
&= \frac{\sqrt{\var (X_{\tau(\ell)})}}{\sigma_0^2\log^2\tau(\ell)}\frac{X_{\tau(\ell)}}{\sqrt{\var (X_{\tau(\ell)})}}\\
&= \sigma_V\frac{\sqrt{\var (X_{\tau(\ell)})}}{\sigma_V\sigma_0^2\log^2\tau(\ell)} 
\mathfrak{X}_{\tau(\ell)} \todp  \sigma_V q_c^{-1}V^{\otimes \lambda_{q_c}}.
\end{aligned}
\end{equation}

\paragraph{{\bf (Ex.6)} Faster than double exponential cooling}
In this case
\begin{equation}\label{eg3pre}
\frac{\tau(\ell)}{T_\ell} \to 1,\qquad
\frac{\sum_{k = 1}^\ell\log^4T_k }{\log^4T_\ell} \to 1,
\end{equation}
from which, by~\eqref{0vc}, it follows that
\begin{equation}\label{asymp3}
\lim_{\ell \to \infty} \lambda_{\tau,\tau(\ell)}(\ell) = 1, \qquad \lim_{\ell\to \infty}
\frac{\var (X_{\tau(\ell)})}{\sigma_0^2\sigma_V^4\log^4 T_\ell}=1,
\end{equation}
and therefore
%%
%\begin{equation}
%\label{eg3pos}
$\lambda^2_{\tau,\tau(\ell)}(\ell) \to 1$.
%\end{equation}
%%
%As before, we combine
By~\eqref{asymp3} %with 
and Corollary~\ref{cor:rRecThree}(b),% to obtain
\begin{equation}\label{inlaw3}
\frac{X_{\tau(\ell)}}{\sigma_0^2\log^2\tau(\ell)} \todd{\ell}  V.
\end{equation}

\paragraph{Subsequences}
In {\bf (Ex.5)} and {\bf (Ex.6)} we need to examine the effect of the boundary. Let $\prt{n_i}_{i \in \N}$ be a subsequence for which~\eqref{e:beg} holds. Then
\begin{equation}
\label{eq:boundcases}
 \frac{ \log \tau(\ell(n_i)-1)}{\log n_i} \to
\begin{cases}
1,    &\text{ if } b\leq 1, \\
b^{-1}, & \text{ if } b>1.
\end{cases}
\end{equation}
Decompose $X_{n_i}=X_{\tau(\ell(n_i)-1)} + \bar{Y}^{n_i}$. By conveniently rewriting the scaling factors, we obtain
\begin{equation}
\label{e:vbb}
\begin{aligned}
&\frac{X_{n_i}}{\sigma_V\sigma_0^2\log^2 n_i}    
 =\frac{ \log^2 \tau(\ell(n_i)-1)}{\log^2 n_i}\times\\
&\bigg(\frac{X_{\tau(\ell(n_i)-1)}}
{\sigma_V\sigma_0^2 \log^2 \tau(\ell(n_i)-1)}
+\frac{\log^2 \bar{T}^{n_i}}{ \log^2 \tau(\ell(n_i)-1)}
\frac{\bar{Y}^{n_i}}{\sigma_V\sigma_0^2 \log^2  \bar{T}^{n_i}} \bigg).
\end{aligned}
\end{equation}
Using~\eqref{inlaw2} and~\eqref{inlaw3} in combination with~\eqref{eq:boundcases} and~\eqref{e:vbb}, we conclude that
\begin{equation}
\label{e:convbb}
\frac{X_{n_i}}{\sigma_V\sigma_0^2\log^2 n_i}\todp
\begin{cases}
q_c^{-1}V^{\otimes \lambda_{ q_c}} + b^2 \sigma_V^{-1}V_0, &\text{if } b\leq 1,\\[0.4cm]
b^{-2}q_c^{-1}V^{\otimes \lambda_{q_c}} + \sigma_V^{-1} V_0, &\text{if } b>1.
\end{cases}
\end{equation}
\end{proof}

%%%%%%%%%%%%%%%%% SECTION 4 %%%%%%%%%%%%%%%%%%%%%%%%%%%

\section{Proofs: Gaussian fluctuations}
\label{s:Gauss}

\subsection{Convergence in the Gaussian regime}
\label{s:Gauss2}

%We prove Theorem~\ref{thm:TranGauss}.
\vspace{-2mm}
%\begin{proof}
\Proof{Proof of Theorem~\ref{thm:TranGauss}}
By Theorem~\ref{thm:RWREextra1}, 
\begin{equation}\label{svc}%s case variance convergence 
\sigma^2_{s}(n):=\text{Var}\crt{\frac{Z_n}{\sqrt{n}}} \xrightarrow[]{} \sigma^2_s.
\end{equation}
Consider a probability space $\prt{S,\mc{S},\mc{P}}$ that is rich enough to include a sequence of i.i.d.\ standard normal random variables $\prt{\Phi_k}_{k \in \bb{N}_0}$ and a collection of random variables $(\mc{R}^{(k)}_n)_{k,n\in\N_0}$ satisfying (recall (A1)--(A4) in Section~\ref{sec:pTSKlp}):
\begin{itemize}
\item[(B1)] For any $k,n\in \bb{N_0}$ and $x \in \bb{R}$,
\begin{equation}\label{seqdist}
\begin{aligned}
&P^{\mu}_0\prt{\frac{Z_n - E^{\mu}_0\crt{Z_n}}{\sigma_{s}(n)}\leq x}
= \mc{P}\left(\Phi_k + \mc{R}^{(k)}_n \leq x\right) .
\end{aligned}
\end{equation}
\item[(B2)] For all $k,n\in\N_0$, $\mc{E}[\mc{R}^{(k)}_n] = 0$.
\item[(B3)] $(\Phi_k,\mc{R}^{(k)}_n)_{n,k\in \bb N_0}$ are independent in $k$ under $\mc{P}$.
\item[(B4)] $\mc{R}^{(k)}_n$ vanishes in $L^2$, i.e.,
\begin{equation}\label{e:l2serror}
\lim_{n\to\infty} \sup_{k\in\N_0} \mc{E}\crt{\prt{\mc{R}^{(k)}_n}^2} = 0.
\end{equation}
\end{itemize}
	
\noindent
Note that, by %the definition of $\lambda_{\tau,n}$ in~
\eqref{e:lfp} %and the independence of $\prt{\Phi_k}_{k\in \N_0}$,
%%
%\begin{equation}\label{Gaumix}
$\sum_{k = 0}^{\ell(n) -1} \lambda_{\tau,n}(k)\Phi_k \overset{(d)}{=}\Phi$.
%\end{equation}
%%
and by%By~\eqref{e:c3} and
~\eqref{seqdist},
\begin{equation}\label{e:seqdist0}
\mathfrak{X}_n - \bb{E}\crt{\mathfrak{X}_n} 
\overset{(d)}{=} \sum_{k=0}^{\ell(n)-1}\lambda_{\tau,n}(k) \left(\Phi_k + \mc{R}^{(k)}_{T_k}\right) 
\overset{(d)}{=}  \Phi +  \sum_{k=0}^{\ell(n)-1}\lambda_{\tau,n}(k)  \mc{R}^{(k)}_{T_k} .
\end{equation}
i.e., $\mathfrak{X}_n - \bb{E}[\mathfrak{X}_n]$ has the same distribution as a standard normal distribution, up to an error term
that is negligible because of~\eqref{e:l2serror}. By (B2) and (B3), we have
\begin{equation}
\mc{E}\crt{\prt{
\sum_{k=0}^{\ell(n)-1}\lambda_{\tau,n}(k)\, R^{(k)}_{T_k}}^2}
= \sum_{k=0}^{\ell(n)-1} \lambda^2_{\tau,n}(k)\,
\mc{E} \big[\big(R^{(k)}_{T_k}\big)^2\big].
\end{equation}
Since $\sum_{k=0}^{\ell(n)-1}\lambda^2_{\tau,n}(k) = 1$, it follows from~\eqref{e:l2serror} that
\begin{equation}\label{e:svanish}
\lim_{J\to\infty}\limsup_{n\to\infty}
\mc{E}\crt{\prt{ \sum_{k=0}^{\ell(n)-1} \lambda_{\tau,n}(k)\,
R_k\Ind{\chv{T_k>J}}}^2} = 0.
\end{equation}
On the other hand, for any fixed $J>0$, under $\bb{P}$, $(Y_k \Ind{\chv{T_k \leq J}})_{k\in\N}$ is a collection of bounded independent random variables. Thus, by the CLT for i.i.d.\ random variables, we get
\begin{equation}\label{e:snormalmass}
\begin{aligned}
\lim_{n\to \infty}
\Bigg\vert\bb{E}\Bigg[f\Bigg(&\sum_{k = 0}^{\ell(n)-1} \lambda_{\tau,n}(k)\,
\frac{Y_k - \bb{E}\crt{Y_k}}{\sigma_{s,T_k}}
\Ind{\chv{T_k \leq J}}\Bigg) \Bigg]\\
&-\mc{E}\Bigg[f\Bigg(\Bigg(\sum_{k = 0}^{\ell(n)-1}\lambda^2_{\tau,n}(k)
\Ind{\chv{T_k \leq J}}\Bigg)^{\frac{1}{2}}\Phi\Bigg)\Bigg]\Bigg\vert = 0
\end{aligned}
\end{equation}
%\begin{equation}\label{e:snormalmass} 
%\begin{aligned}
%&\lim_{n\to \infty}  \left\vert\bb{E}\left[f\left(\sum_{k = 0}^{\ell(n)-1} \lambda_{\tau,n}(k)\,
%\frac{Y_k - \bb{E}\crt{Y_k}}{\sigma_{0,T_k}}\,\Ind{\chv{T_k \leq J}}\right)\right] \right. \\
%&\qquad\qquad\qquad\qquad\qquad \left.
%- \bb{E}\left[f\left(\left(\sum_{k = 0}^{\ell(n)-1}\lambda^2_{\tau,n}(k)\right)^{1/2}\Phi\right)\right]\right\vert = 0.
%\end{aligned}
%\end{equation}
%%
Theorem~\ref{thm:TranGauss} follows from%~\eqref{Gaumix},
~\eqref{e:svanish} and~\eqref{e:snormalmass} by applying the same arguments that led to~\eqref{f0inlaw}, 
%Indeed, as in~\eqref{not_ease}, we consider the truncated random variables %define 
%%
%\arraycolsep=1.2pt\def\arraystretch{2} 
%\begin{array}{rlrl}
%:= \sum_{k=0}^{\ell(n)-1} \overline{\lambda_{n}}^{\,0,J} \Phi_k$, 
%:= \sum_{k=0}^{\ell(n)-1} \overline{\lambda_{n}}^{\,J,\infty} \Phi_k,
%:= \sum_{k=0}^{\ell(n)-1}\overline{\lambda_{n}}^{\,J,\infty}(k)\,\mc{R}^{(k)}_{T_k}\,
%:= \overline{\mc{R}_n}^{\,0,J}-\overline{\mc{R}_n}^{\,0,J}$. 
%\end{array} 
%%\begin{equation}\label{snotation}
%$\overline{\Phi}_n^{\,0,J}$,%\qquad
%$\overline{\Phi}_n^{\,J,\infty}$,%\qquad 
%$\overline{\mc{R}_n}^{\,0,J}$, %\qquad
%%\Ind{\chv{T_k>J}}, \quad
%$\overline{\mc{R}_n}^{\,0,J}$.
%%\end{equation}
%%
%With this notation, for any bounded function $f\colon\,\bb{R}\to \bb{R}$ with bounded first derivative, via arguments analogous to those employed in~\eqref{f0inlaw},
 it follows that
\begin{equation}\label{fsinlawshort}
\mathfrak{X}_n - \bb{E}\crt{\mathfrak{X}_n} \tod \Phi.
\end{equation}
To prove the convergence in $L^2$ of  $(\tilde{\mathfrak{X}_n^2})_{n\in \N}$, as in~\eqref{not_ease} we consider the truncated random variables  %define 
%%
%\arraycolsep=1.2pt\def\arraystretch{2} 
%\begin{array}{rlrl}
%:= \sum_{k=0}^{\ell(n)-1} \overline{\lambda_{n}}^{\,0,J} \Phi_k$, 
%:= \sum_{k=0}^{\ell(n)-1} \overline{\lambda_{n}}^{\,J,\infty} \Phi_k,
%:= \sum_{k=0}^{\ell(n)-1}\overline{\lambda_{n}}^{\,J,\infty}(k)\,\mc{R}^{(k)}_{T_k}\,
%:= \overline{\mc{R}_n}^{\,0,J}-\overline{\mc{R}_n}^{\,0,J}$. 
%\end{array} 
%\begin{equation}\label{snotation}
$\overline{\Phi}_n^{\,0,J}$,%\qquad
$\overline{\Phi}_n^{\,J,\infty}$,%\qquad 
$\overline{\mc{R}_n}^{\,0,J}$, %\qquad
%\Ind{\chv{T_k>J}}, \quad
$\overline{\mc{R}_n}^{\,0,J}$. Now,% note that
%\end{equation}
%%
%%
\begin{equation} 
\label{sunifint}  
\begin{aligned} 
&\lim_{M\to \infty} \sup_{n\in\bb{N}_0}\bb{E}\crt{\tilde{\mathfrak{X}}_n^{2}\Ind{\chv{\tilde{\mathfrak{X}}_n^2>M}}}\\
&\quad = \lim_{M\to \infty}\sup_{n\in\bb{N}_0}\mc{E}\crt{\prt{\prt{\overline{\Phi}_n^{\,0,\infty} 
+ \overline{\mc{R}_n}^{\,0,\infty}}^2\Ind{\chv{\tilde{\mathfrak{X}}_n^2>M}}}}\\
&\quad \leq \inf_J\sup_{n\in \N_0}\mc{E}\crt{\prt{ \overline{\mc{R}_n}^{\,J,\infty}}^2} = 0,
\end{aligned} 
\end{equation}
where we used the uniform integrability in $L^2$ of $(\overline{\Phi}_n^{\,0,J})_{n\in\N_0}$ and $(\overline{\mc{R}}^{\,0,J}_n)_{n\in\N_0}$ to obtain the the inequality.
%\end{proof}
\QED

%%%%%%%%
\subsection{Limit points and stability of the variance}

%We prove Corollary~\ref{Gaussregular}.

%\begin{proof}
\Proof{Proof of Corollary~\ref{Gaussregular}}
To  prove~\eqref{gaussgeneral}, %simply 
note that if $\sigma_{s,\tau}(n_i) \to \sigma$, then
\begin{equation}
\frac{X_n - \bb{E}[X_{n_i}]}{\sigma \sqrt{n_i}}  = \frac{\sigma_{s,\tau}(n_i)}{\sigma} \prt{\mathfrak{X}_{n_i} - \bb{E}\crt{\mathfrak{X}_{n_i} }} \tod \Phi.
\end{equation}
To prove~\eqref{gausscooling}, use~\eqref{svc}. Indeed, if $T_k \to \infty$, then 
%\[
$\sigma^2_{s}(T_k) \to \sigma^2_s$
%\] 
and
\begin{equation}\label{sncool}
\begin{aligned}
\lim_{n\to\infty} \frac{\sigma^2_{s,\tau}(n)}{\sigma_s^2}
= \lim_{n\to\infty} \frac{\var\prt{X_n}}{n\sigma^2_s }
= \lim_{n\to\infty} \sum_{k = 0}^{\ell(n)} \frac{T_{k}}{n}
\frac{\sigma^2_{s}(T_k)}{\sigma_s^2} = 1,
\end{aligned}
\end{equation}
where the last equality follows from the Toeplitz Lemma~\cite[Thm.1.2.3]{R68}. 
%\end{proof}
\QED

%%%%%%%%%%%%%%%%%%%%%%%%%%%%%%%%%%%%%%%%%%%%%%%%%%%%%%%%%%%%
\subsection{Stable centering and counterexample}
\label{scentering}

%We next examine {\bf (Ex.7)} and {\bf (Ex.8)}.

\begin{proof}
We first turn to {\bf (Ex.7)}. To prove~\eqref{sclt}, note that  the $L^2$-convergence in~\eqref{Lpsnormal} implies that 
\begin{equation}\label{staticcentering}
\bb{E}\crt{\frac{Z_n - nv_\mu}{\sqrt{n}}} \to 0.
\end{equation}
Let $C_n := \sum_{k = 0}^{\ell(n)-1} \lambda_{\tau,n}(k)$ and $C := \sup_{n\in\N} C_n$. Condition~\eqref{scltgrowth} corresponds to $C<\infty$. In this case, by Markov's inequality,~Theorem~\ref{thm:RWREextra2}(II) and the Toeplitz lemma~\cite[Thm.1.2.3]{R68}, we have
\begin{equation}\label{scenterfastcool}
\begin{aligned}
\abs{  \bb{E} \crt{ \frac{X_{n} - nv_\mu}{\sigma_s\sqrt{n}}  }} &\leq 
\sum_{k=0}^{\ell(n)-1} \frac{\lambda_{\tau,n}(k)}{C}\abs{ \bb{E}\crt{\frac{Y_k- T_kv_\mu }{\sigma_s\sqrt{T_k}}} }  
\xrightarrow[n\to\infty]{} 0.
\end{aligned}
\end{equation}
From~\eqref{sncool} and~\eqref{scenterfastcool} it follows that  
\begin{equation}
\frac{X_n - n v_\mu}{\sigma_s \sqrt{n}} = \alpha_n\prt{\mathfrak{X}_n
- \bb{E}\crt{\mathfrak{X}_n}} + \beta_n
\end{equation}
with $\alpha_n \to 1$, $\beta_n \to 0$,
and~\eqref{sclt} follows from~\eqref{e:GFcis}.
	
We next turn to {\bf (Ex.8)}. To show~\eqref{infinity}, consider the sets
\begin{equation}
\begin{aligned}
\mathcal{N}_+ := \{n \in \N \colon\, E^\mu_0[Z_n] > 0\}, \quad
\mathcal{N}_- := \{n \in \N \colon E^\mu_0[Z_n] < 0\}.
\end{aligned}
\end{equation}
By Theorem~\eqref{thm:RWREextra2}(2), there exists an $s$-transient $\alpha$ for which at least one of these sets is infinite. Assume without loss that $\mathcal{N}_+ =\chv{n_1<n_2<\ldots}$ is infinite. 
%Let $n_1<n_2<\ldots$ denote the elements of $\mathcal{N}_+$. 
Define the cooling map by successively picking $N_\ell$ consecutive increments of size $n_\ell$ for every $\ell \in \N$, where the values of $\prt{N_\ell}_{\ell\in \N}$ are chosen such that
\begin{equation}\label{Nellcenter}
\frac{N_\ell}{\sqrt{\sum_{m=1}^\ell N_m n_m}} \prt{E^\mu_0\crt{Z_{n_\ell}}- v_{\mu}} > \ell.
\end{equation}
Let
%%
%\begin{equation}
$s(0) := 0$, and for  $\ell\in\N$, define $s(\ell) := s(\ell-1) + N_\ell n_\ell$.
%\end{equation}
Therefore, %by noting that
\begin{equation}
\bb{E}\crt{X_{n(k)}} - \sqrt{s(k)}\,v_\mu 
=  \sum_{\ell=1}^k \frac{N_\ell}{\sqrt{\sum_{m=1}^\ell N_m n_m}} \prt{E^{\mu}_0\crt{Z_{n_\ell}}- v_{\mu}}>k,
\end{equation}
which proves~\eqref{infinity}.
\end{proof}

%%%%%%% APPENDICES %%%%%%%%%%%%%%

\appendix

%%%%%%% APPENDIX A %%%%%%%%%%%%%%%%%%%%%%%%%%%%

\section{$L^p$-convergence in the Gaussian regime} \label{appA} 

We prove Theorem~\ref{thm:RWREextra1}.

\paragraph{Preparation}
Recall that $\rho_j = \frac{1-\omega(j)}{\omega(j)}$. Following Zeitouni~\cite[Section 2.2]{ZZ04}, we have
\begin{equation}\label{e:Delta}
\Delta(j,\omega) := -1 + v_\mu \,\Sigma(\theta^j \omega), \quad j \in \Z,
\end{equation}
where
\begin{equation}\label{e:S}
\begin{aligned}
&\Sigma(\omega) := \sum_{i = -\infty}^0\ \frac{1}{\omega_i} \prod_{j = i+1}^0 \rho_j
\end{aligned}
\end{equation}
and $\theta$ denotes the spatial shift operator acting on $(0,1)^\Z$ (i.e., $(\theta \omega)(j) = \omega(j + 1)$, $j \in \Z$). Now  define, for $n \in \N$,
\begin{equation}\label{e:CLTd} % CLT decomposition
M_n := Z_n - v_\mu n +S_n + R_n, 
\end{equation}
where ($S_0=0$)
\begin{equation}\label{e:SRn}
\begin{aligned}
S_n  &:= \sum_{j = 0}^{nv_\mu} \Delta(j,\omega),%\\[0.3cm]
 \qquad R_n :=
\begin{cases}
\sum_{j = Z_n}^{nv_\mu} \Delta(j,\omega), \quad &\text{ if } Z_n < nv_\mu,\\[0.2cm]
0, \quad &\text{ if } Z_n = nv_\mu,\\[0.2cm]
\sum_{j = nv_\mu + 1}^{Z_n -1} \Delta(j,\omega), \quad &\text{ if } Z_n > nv_\mu.
\end{cases}
\end{aligned}
\end{equation}
Note that, in this decomposition, $S_n$ depends only on $\omega$. Therefore we will distinguish between the different measures and write $E_\mu$ for expectation with respect to $\mu$. Next, by~\cite[Theorem 1.16 (i)]{Sol75}, $v^{-1}_\mu = E_\mu\crt{\Sigma(\omega)}$, and consequently 
\begin{equation}\label{Deltamean0}
E_\mu\crt{\Delta(x,\omega)}=0.
\end{equation}
Therefore, for $s\in(2,\infty)$,~\eqref{e:CLTd} is a decomposition of $\prt{Z_n- v_\mu n}_{n\in\N_0}$ into a martingale $\prt{M_n}_{n\in \N_0}$ with respect to the natural filtration of the random walk $\mc{F}_n =\sigma(Z_i\colon 0 \leq i \leq n) $ and the probability measure $P^\omega_0$ for any $\omega\in (0,1)^\Z$; a mean-zero stationary sequence $\prt{S_n}_{n\in\N_0}$ with respect to the shift operator $\theta$ and the measure $\mu$; and a remainder term $\prt{R_n}_{n\in\N_0}$. Furthermore, the assumptions of~\cite[Theorem 2.2.1]{ZZ04} are satisfied and, under the annealed measure $P^{\mu}_0$,
\begin{equation}\label{e:conv}
\begin{aligned}
&  n^{-\frac{1}{2}} R_n \tod 0, \quad n^{-\frac{1}{2}} M_n \tod \sigma_{1,\mu}\Phi_1,\quad
&n^{-\frac{1}{2}} S_n \tod \sigma_{2,\mu} \Phi_2.
\end{aligned}
\end{equation}
where $\sigma_{1,\mu}$, $\sigma_{2,\mu}$ will be introduced below and $\Phi_1$, $\Phi_2$ are standard normal random variables. To prove $L^p$-convergence, it suffices to show that, for any $p\in(2,s)$
\begin{equation}\label{e:R20}
\sup_{n\in\N} E^{\mu}_0\crt{\abs{n^{-\frac{1}{2}}R_n}^p} <\infty,\\
\end{equation}
\begin{equation} \label{martingalep}
\sup_{n\in\N}E^{\mu}_0\crt{\abs{ n^{-\frac{1}{2}}M_n}^p} <\infty,\\
\end{equation}
%%
%and
%%
\begin{equation} \label{stationaryp}
\sup_{n\in\N}E_{\mu}\crt{\abs{ n^{-\frac{1}{2}}S_n}^p} <\infty.
\end{equation}
These conditions ensure uniform integrability in $L^p$ for $p<s$ and, combined with~\eqref{e:conv}, yield the desired result.
The proof of \eqref{e:R20} is given in Section~\ref{apprem}, and the proofs of~\eqref{martingalep},~\eqref{stationaryp} are given in Section~\ref{ss:L2}.  

%%%%%%%%%%%%%%%%%%%%%%%%%%%%%%%%%%%%%%%%%%%%%%%%%%%%%%%%%%%%
\subsection{Remainder term}
\label{apprem}

For $p\in (2,s)$, note that
\begin{equation} \label{e:R2}
\sup_{n\in\N}E^{\mu}_0\crt{\abs{n^{-\frac{1}{2}} R_n}^p}
= \sup_{n\in\N}\int_0^\infty p\delta^{p-1}\,P^{\mu}_0 \prt{n^{-\frac{1}{2}}\abs{R_n}>\delta}\,d\delta.
\end{equation}
As $P^{\mu}_0 \prt{\abs{Z_n - n v_\mu}>2n} = 0$, by~\eqref{e:SRn}, we have
\begin{equation} \label{e:PRd}
\begin{aligned}
&P^{\mu}_0 \prt{n^{-\frac{1}{2}}\abs{R_n}>\delta}\\[0.2cm]
%&\leq  P^{\mu}_0 \prt{\abs{Z_n - n v_\mu}>2n}\\
&\leq  \mu\prt{\max_{j-, j+ \in (v_\mu n -2n ,v_\mu n +  2n)} 
\abs{\sum_{i=j-}^{j+} \frac{\Delta(i,\omega)}{\sqrt{n}}} \geq \delta}\\[0.2cm]
&=  \mu\prt{\max_{j-, j+ \in (-2n ,2n)} \abs{\sum_{i=j-}^{j+} \frac{\Delta(i,\omega)}{\sqrt{n}}} \geq \delta}\\
&\leq 2\mu \prt{\max_{ j \in (0, 2n)} \abs{\sum_{i=j}^{0} \frac{\Delta(i,\omega)}{\sqrt{n}}} \geq \frac{\delta}{2}},
\end{aligned}
\end{equation}
%%
%where in the first inequality we dominated the value of $R_n$ by the maximum of the sum in the interval $(v_\mu n -2n ,v_\mu n +  2n)$, and the second equality follows from $P^{\mu}_0 \prt{\abs{Z_n - n v_\mu}>2n} = 0$ (because $\abs{Z_n}\leq n$ and $\abs{v_\mu}\leq 1$). 
where in the first inequality, since the random variable does not depend on the random walk and is a function of the environment $\omega$ only, we replace $P^\mu_0$ by $\mu$; the equality   follows from the stationarity of $\Delta(i,\omega)$ and to obtain the last inequality we estimate the invcrement from $j-$ to $j+$ in terms of the distance to the origin and use symmetry.
%%
%\begin{equation} \label{e:osmi} %one side max inequality
%\begin{aligned}
%&\mu \prt{\omega \,\colon\,\max_{j-, j+ \in (-2n, 2n)} \abs{\sum_{i=j-}^{j+} \frac{\Delta(i,\omega)}{\sqrt{n}}} \geq \delta}\\
%%&\leq 2\mu \prt{\omega \,\colon\,\max_{ j+ \in [0, 2n)} \abs{\sum_{i=0}^{j+} \frac{\Delta(i,\omega)}{\sqrt{n}}} \geq \delta/2}
%\\
%%&\qquad + 2\mu \prt{\omega \,\colon\,\max_{ j- \in (-2n,0]} \abs{\sum_{i=j-}^{0} \frac{\Delta(i,\omega)}{\sqrt{n}}} \geq \delta/2}\\
%& \leq 4\mu \prt{\omega \,\colon\,\max_{ j- \in (0, -2n)} \abs{\sum_{i=j-}^{0} \frac{\Delta(i,\omega)}{\sqrt{n}}} \geq \delta/2}.
%\end{aligned}
%\end{equation}
%%
By Markov's inequality,
\begin{equation} \label{e:MRd}
\begin{aligned}
&\mu \prt{\max_{ j \in (0, 2n)} \abs{\sum_{i=0}^{j} \frac{\Delta(i,\omega)}{\sqrt{n}}} \geq \delta} \leq \frac{1}{\delta^p} E_\mu \crt{\max_{ j \in (0, 2n)} \abs{\sum_{i=0}^{j} \frac{\Delta(i,\omega)}{\sqrt{n}}}^p}.
\end{aligned}
\end{equation}
We estimate this expectation with the help of~\cite[Proposition 7]{MerFloPel06},
\begin{equation}\label{e:Lpmax}
\begin{aligned}
E_{\mu} \prt{\max_{ j+ \in (0, n)} \abs{\sum_{i=0}^{j+} \frac{\Delta(i,\omega)}{\sqrt{n}}}^p}
\leq C_p \prt{\frac{\sum_{i = 1}^{n} b_{i,n,p}}{n}}^{\frac{p}{2}},
\end{aligned}
\end{equation}
where
\begin{equation}\label{e:bin}
\begin{aligned}
b_{i,n,p} := \max_{i\leq \ell \leq n} \norm{\Delta(i, \omega)
\sum_{k = i}^\ell \mu\crt{\Delta(k, \omega) \vert \mc{G}_i}}_{\frac{p}{2}},
\end{aligned}
\end{equation}
$\mc{G}_i:=\sigma(\omega(j)\colon j \leq i)$, and 
$\norm{f}_p= \int_0^1 \abs{f(\omega)}^p \, d\mu(\omega)$.
 Below we show that 
\begin{equation}\label{stationaryub}
\sup_{i,n}   b_{i,n,p} =: K < \infty.
\end{equation}

\medskip\noindent
To conclude the proof of  \eqref{e:R20} with the help of \eqref{stationaryub}, note that \eqref{e:Lpmax} is uniformly bounded in $n \in \N$ and therefore by combining it with~\eqref{e:PRd}--\eqref{e:MRd} we can bound the right-hand side of~\eqref{e:R2} by
\begin{equation}\label{e:R2dec}
\int_0^1 p\delta^{p-1}\, P^{\mu}_0 \prt{n^{-\frac{1}{2}}\abs{R_n}>\delta} \,d\delta
+ C\int_1^\infty\delta^{p-1} \frac{1}{\delta^{p'}}\, d\delta
\end{equation}
for some $C>0$ and $p'\in (p,s)$. Since for $p'>p$ the second integral above is finite,
%%
%\begin{equation}\label{e:Ai}
%\int_1^\infty \delta^{p-1} \frac{1}{\delta^{p'}}\,d\delta <\infty,
%\end{equation}
%%
this conclude the proof of~\eqref{e:R20}. It remains to verify~\eqref{stationaryub}.

%%%%%%%%%%%%%%%%%%%%%%%%%%%%%%%%%%%%%%%%%%%%%%%%%%

\bigskip\noindent
{\bf Bound on $b_{i,n,p}$.}
To prove~\eqref{stationaryub}, %note that, as $v^{-1}_\mu = E_\mu\crt{\Sigma(\omega)}$, 
the expression~\eqref{e:S} allows us to bound the conditional expectation in~\eqref{e:bin} by
\begin{equation} \label{e:ubc}
\begin{aligned}
&E_{\mu}\crt{\Delta(i + k, \omega) \vert \mc{G}_i} = -1 + \frac{1}{E_{\mu}\crt{\Sigma(\omega)}}\\
&\qquad\times\bigg(\langle\omega(0)^{-1}\rangle\prt{1 + \langle\rho\rangle + \cdots + \langle\rho\rangle^{k-1}}
+ \langle\rho\rangle^k \Sigma(\theta^i \omega)\bigg)\\
&\leq \frac{1}{E_{\mu}\crt{\Sigma(\omega)}}\langle\rho\rangle^k \prt{\frac{-\langle\omega(0)^{-1}\rangle}
{1 - \langle\rho\rangle}  + \Sigma(\theta^i \omega)},
\end{aligned}
\end{equation}
where the inequality follows from observing that
\begin{equation}
E_{\mu}\crt{\Sigma(\omega)} = \langle\omega^{-1}\rangle
\prt{1 + \langle\rho\rangle + \langle\rho\rangle^2 + \cdots}.
\end{equation}
The right-hand side of~\eqref{e:bin} is bounded by
\begin{equation} \label{e:sbc}
\begin{aligned}
&\sum_{k\in\N_0} \norm{\Delta(i,\omega) E_\mu\crt{\Delta(i + k,\omega) \vert \mc{G}_i}}_{\tfrac p 2}\\
&\quad \leq \sum_{k\in\N_0} v_\mu\langle\rho\rangle^k 
\norm{\prt{-1 + v_\mu \Sigma(\theta^i \omega)} \prt{\frac{-\langle\omega^{-1}\rangle}
{1 - \langle\rho\rangle}  + \Sigma(\theta^i \omega)}}_{\tfrac p 2}\\
&\quad \leq \sum_{k\in\N_0}^\infty v_\mu\langle\rho\rangle^k 
C\prt{1 + \mu\crt{\prt{\Sigma(\theta^i \omega)}^{p}}}^{\frac{2}{p}}\\
&\quad =  \frac{1}{1 - \langle\rho\rangle}v_\mu C\prt{1 + \mu\crt{\prt{\Sigma(\omega)}^{p}}}^{\frac{2}{p}} <\infty,
\end{aligned}
\end{equation}
where in the second inequality we used that $ab \leq a^2 + b^2$ to separate the constants from the random variable $\Sigma(\theta^i \omega)$, and $C>0$ is a constant that does not depend on $k$ or $i$. The last equality follows from $\Sigma(\theta^i \omega)\overset{(d)}{=}\Sigma( \omega)$, and the final bound follows from $E_\mu\crt{\prt{\Sigma(\omega)}^{p}}<\infty$ for $p<s$, as can be seen by applying Minkowsky's inequality on the $L^p$ norm of~\eqref{e:S}.

%%%%%%%%%%%%%%%%%%%%%%%%%%%%%%%%%%%%%%%%%%%%%%%%%%
\subsection{$L^p$-convergence to the normal} \label{ss:L2}

To prove~\eqref{martingalep} and~\eqref{stationaryp} we will bound the expectations  with a bound on the difference of the distribution functions of the each random variables and the standard  normal distribution.

Let $\prt{W_n}_{n\in\N}$ be a sequence of random variables,  $P$ the underlying probability measure and $E $ its corresponding expectation. Assume that this sequence converges in distribution to the standard normal. To prove that this convergence is also in $L^p$- we need to show that, for $p <s$,
\begin{equation} \label{lp}
\sup_{n\in\N} E \left[ \abs{W_n}^p \right] < \infty.
\end{equation}
We have
\begin{equation} \label{p-moment}
\begin{aligned}
E \left[ \abs{W_n}^p \right]
&= \int_0^\infty dx\,p x^{p-1} P \left( \abs{W_n} > x \right)\\
&= \int_0^\infty dx\,p x^{p-1} \big[ P \left( \abs{W_n} > x \right)
- P \left( \abs{\Phi} > x \right) \big]
\\
&\qquad\qquad\qquad\qquad \qquad+ \int_0^\infty dx\, p x^{p-1} P \left( \abs{\Phi} > x \right).
\end{aligned}
\end{equation}
Since
%%
%\begin{equation}
$\int_0^\infty dx\,p x^{p-1} P \left( \abs{\Phi} > x \right) < \infty$,
%\end{equation}
%%
if
\begin{equation}
\abs{P(\abs{W_n} \geq x) - P \left( \abs{\Phi} > x \right)} \leq C \, a_n \, f(x),
\end{equation}
where $a_n$ and $f(x)$ satisfy
\begin{equation}\label{e:ubpd}%uniform bound probability difference
\sup_{n\in\N} a_n < \infty, \qquad \int_0^\infty dx\,x^{p-1} \, f(x) < \infty,
\end{equation}
then~\eqref{lp} follows.

%%%%%%%%%%%%%%%%%%%%%%%%%%%%%%
\subsubsection{Martingale part}

We will use a result in~\cite{HaeJoo88} to prove \eqref{martingalep}. Define $M_0=0$, and the square-integrable martingale difference sequence $(D_n)_{n\in\N}$ by
%%
%\begin{equation}
$D_k := M_k - M_{k-1}$.
%\end{equation}
%%
As shown in~\cite[p.211]{ZZ04}, the quadratic variation of $(M_n)_{n\in\N}$ under $P^\omega_0$ is given by $A^\omega_n := \sum_{k = 1}^n E^\omega_0[D_k^2\vert \mc{F}_{k-1}]$, where
\begin{equation}\label{qvmartingale}
\begin{aligned}
&E^\omega_0[D_k^2\vert \mc{F}_{k-1}]\\
&\quad = v^2_\mu\bigg[\bar{\omega}_0(k) \big(\Sigma(\bar{\omega}(k))-1\big)^2 + (1 -\bar{\omega}_0(k)) (\Sigma\big(\theta^{-1}\bar{\omega}(k))+1\big)^2 \bigg],
\end{aligned}
\end{equation}
As shown in~\cite[Corollary 2.1.25]{ZZ04}, the sequence $(\bar{\omega}(k): = \theta^{Z_k} \omega)_{k\in \N}$ is stationary and ergodic under $Q\otimes P^\omega_0$,  where 
%\begin{equation}\label{ergm}
$Q(d\omega) := \Lambda(\omega) P(d\omega)$,
%\end{equation}
and 
%\begin{equation}\label{radonderivative}
$\Lambda(\omega) := \frac{1}{\omega_0} + \frac{1}{\omega_0}\rho_1 +  \frac{1}{\omega_0}\rho_1\rho_2 + \cdots 
=\frac{1}{\omega_0} \prt{ \sum_{i = 0}^\infty\prod_{j = 0}^{i-1}\rho_j}$.
%\end{equation}
 Therefore, letting $E^Q$ denote the expectation with respect to $Q$, we see that the following limit exists $Q$-almost surely:
\begin{equation}\label{Qexp}
\begin{aligned}
\sigma^2_{\mu,1} &:= \lim_{n\to\infty} \frac{1}{n} \sum_{k=1}^n E_0^\omega\crt{D_k^2\vert \mc{F}_{k-1}}\\
&= E^Q\bigg[v^2_\mu\bigg[\omega_0 \big(\Sigma(\omega)-1\big)^2 %\\\
%&\qquad+ (1 -\omega_0) 
+(\Sigma\big(\theta^{-1}\omega)+1\big)^2 \bigg]\bigg].
 \end{aligned}
\end{equation}
%%
%for all $ k \in \N_0$.
Fix $\delta > 0$ such that $2 + 2 \delta < s$, let $D_{k,n} := (\sigma_{\mu,1} \sqrt{n}) ^{-1}D_k$, and consider the following two quantities:
\begin{align}
&A_{n,\delta} := \sum_{k=1}^n E^{\mu}_0 \left[ \left\vert D_{k,n} \right\vert^{2 + 2\delta} \right],\\[0.2cm]
&B_{n,\delta} := E^{\mu}_0 \left[ \abs{1-\sum_{k=1}^n E^{\mu}_0 \left[ D_{k,n}^2 \vert \mc{F}_{k-1} \right] }^{1+\delta} \right].
\end{align}
Since
\begin{equation}
E^{\mu}_0 \crt{\abs{D_{k,n}}^{2 + 2 \delta}} = \frac{1}{\sigma_{\mu,1}^{2 + 2 \delta} n^{1 + \delta}} \,
E^{\mu}_0 \left[ \abs{M_k - M_{k-1}}^{2 + 2 \delta} \right],
\end{equation}
we can bound $A_{n,\delta}$ by:
\begin{equation}
\begin{aligned}
%A_{n,\delta} &= \frac{1}{\sigma_{\mu,1}^{2 + 2 \delta} n^{1+\delta}} \sum_{k=1}^n E^{\mu}_0 
%\left[ \abs{M_k - M_{k-1}}^{2 + 2 \delta} \right] \\
%&\leq 
 \frac{\sup_{k\in\N} E^{\mu}_0 
\left[ \abs{M_k - M_{k-1}}^{2 + 2 \delta} \right] \,}{\sigma_{\mu,1}^{2 + 2 \delta}n^{\delta}}.
\end{aligned}
\end{equation}
Since $2 + 2 \delta < s$, we have $\sup_{k\in\N} E^{\mu}_0 [ \abs{M_k - M_{k-1}}^{2 + 2 \delta} ] < \infty$, and therefore $A_{n,\delta} \to 0$.

To estimate $B_{n,\delta}$, we first note that 
\begin{equation}\label{condeq}
\begin{aligned}
E_0^\mu\crt{D_k^2\vert \mc{F}_{k-1}} =  \int_0^1 E_0^\omega\crt{D_k^2\vert \mc{F}_{k-1}} \, d\mu(\omega).
\end{aligned}
\end{equation}
Now note that since $\Lambda(\omega) \geq 1$, for all positive $f$,
%\begin{equation}\label{monot}
$E^Q\crt{f} \geq   E^\mu_0\crt{f}$.
%\end{equation}
Next, we apply the von Neumann $L^p$-ergodic theorem in~\cite[Corollary 1.14.1]{Wal00} to the ergodic sequence $(E^\omega_0[D_k^2\vert \mc{F}_{k-1}])_{k\in\N}$ in $ L^{1 + \delta}(Q\otimes P^\omega_0)$, to conclude that
\begin{equation}
\lim_{n\to \infty}E^Q\crt{\abs{1-\sum_{k=0}^n E^\omega_0 \left[ D_{k,n}^2 \vert \mc{F}_{k-1} \right]}^{1 +\delta}} = 0
\end{equation}
and that 
%\begin{equation}
$\lim_{n \to \infty} B_{n,\delta} = \lim_{n\to \infty}E^\mu_0\crt{\abs{1-\sum_{k=0}^n 
E^\omega_0 \left[ D_{k,n}^2 \vert \mc{F}_{k-1} \right]}^{1 +\delta}} = 0$.
%\end{equation}
By~\cite[Theorem 1]{HaeJoo88}, whenever $a_{n,\delta}:= A_{n,\delta} + B_{n,\delta}<1$, then for any $\delta > 0$ there exists a finite constant $C_\delta$ such that
\begin{equation} \label{estimate_martingaleCLT}
\abs{P^{\mu}_0\left( \sum_{k=1}^n D_k \leq x \right) - P \left( \abs{\Phi} > x \right)}
\leq C_\delta \, a_{n,\delta}^{\tfrac{1}{3+2\delta}}\,\left(1 + |x|^{2+2\delta}\right)^{-1}
\end{equation}
for all $x \in \R$. Since $a_{n,\delta} \to 0$, the terms in~\eqref{estimate_martingaleCLT} satisfy~\eqref{e:ubpd}. If we replace $W_n$ in~\eqref{lp} by $\tfrac{M_n}{\sigma_{\mu,1} \sqrt{n}}$, then we obtain~\eqref{martingalep}.

%%%%%%%%%%%%%%%%%%%%%%%%%%%%%%
\subsubsection{Stationary part}

We show~\eqref{stationaryp} with the help of~\cite[Theorem 2.4]{Jir16}. Indeed, if $\{\Delta(j,\omega)\}_{j \in \N}$ satisfies~\cite[Assumption 2.1]{Jir16}, then for some constants $C_p > 0$ and $b_{n,p} > 0$
\begin{equation} \label{e:es} %estimate_stationary
\abs{ E^\mu_0\left( \sum_{k=1}^{n v_\mu} \Delta(j,\omega)
\leq \sigma_{\mu,2} \sqrt{n} \, x \right) - \phi(x)}
\leq C_p \, b_{n,p} \, (1 + |x|^p)^{-1},
\end{equation}
for any $x \in \R$, where
\begin{equation} \label{varstationary}
\begin{aligned}
\sigma_{\mu,2}^2 &:= \lim_{n\to\infty} \frac{1}{n} E^\mu_0\crt{\prt{\sum_{k=0}^n \Delta(k,\omega)}^2}.
\end{aligned}
\end{equation}
To verify the conditions in~\cite{Jir16}, we need to introduce some notation.

Let $\omega'(0)\in(0,1)$ be an independent random variable selected according to $\alpha$, and define
\begin{equation}
\omega'(k) :=
\begin{cases}
\omega(k), & \text{ if } k \neq 0,\\
\omega'(0), & \text{ if } k = 0.
\end{cases}
\end{equation}
Recall~\eqref{e:Delta} and~\eqref{e:SRn}. Since the sequence $(\omega_{x})_{x\in \Z}$ is stationary with respect to $\theta$ under $\mu$, we have
\begin{equation} \label{B-0}
\text{$\Delta(j,\omega)$ is stationary with respect to $\theta$ under $\mu$}.
\end{equation}
In what follows, we verify the remaining conditions~\cite[Assumption 2.1]{Jir16} and fix $p \in (2,s)$. First note that
\begin{equation} \label{B-1}
\norm{\Delta(k,\omega)}_p \leq 1 + v_\mu \norm{\Sigma(\omega)}_p < \infty, 
\qquad E^\mu_0 [\Delta(j,\omega)] = 0.
\end{equation}
Next note that, since $\norm{\Delta(k,\omega) - \Delta(k,\omega')}_p \leq C_p \norm{\rho}^k_p$ with $\norm{\rho}_p< 1$ (because $p<s$), we obtain that
\begin{equation} \label{B-2}
\sum_{k=1}^\infty k ^2 \, \norm{\Delta(k,\omega) - \Delta(k,\omega')}_p < \infty.
\end{equation}
To verify the last condition note that, since $\Delta(j,\theta^{-k}\omega) =\Delta(j-k,\omega)$, by expanding~\eqref{varstationary} and using the stationarity of $\Delta(k,\omega)$, we get
\begin{equation} \label{B-5}
\begin{aligned}
\sigma_{\mu,2}^2 =   E^\mu_0 \left[ \Delta(0,\omega)^2 \right] 
+ 2 \sum_{k\in\N} E^\mu_0 \left[ \Delta(0,\omega) \, \Delta(k,\omega) \right].
\end{aligned}
\end{equation}
Since 
\begin{equation}
\Sigma(\theta^k \omega) = \frac{1}{\omega_k} + \frac{1}{\omega_{k-1}} \rho_k
+ \cdots + \frac{1}{\omega_1} \rho_k \times \cdots \times \rho_2 
+ \rho_k \times \ldots \times \rho_1 \Sigma(\omega),
\end{equation}
by~\eqref{Deltamean0} it follows that
\begin{equation}
E^\mu_0[\Sigma(\omega) \Sigma(\theta^k \omega)] = v_\mu^{-2} (1 - \langle\rho\rangle^k) + \langle\rho\rangle^k  E^\mu_0 \left[ \Sigma(\omega)^2 \right].
\end{equation}
Since, for non-degenerate $\alpha$, $ E^\mu_0[\Sigma(\omega)^2] > E^\mu_0[\Sigma(\omega)]^2 = v_\mu^{-2}$, we obtain that
\begin{equation}\label{B-3}
\sigma^2_{\mu,2}\sum_{k\in\N}  E^\mu_0\left[ \Delta(0,\omega) \, \Delta(k,\omega) \right] > 0,
\end{equation}
which implies that $\sigma_{\mu,2}^2 > 0$. Conditions~\eqref{B-0},~\eqref{B-1},~\eqref{B-2} and~\eqref{B-3} allow us to apply the result in~\cite{Jir16} and obtain~\eqref{e:es}. By substituting~\eqref{e:es} into the right-hand side of~\eqref{p-moment} we obtain~\eqref{stationaryp} and thereby conclude the proof of Theorem~\ref{thm:RWREextra1}.

%%%%%%%%%%%%%%%%%%%%%%%%%%%%%%%%%%%%%%
%%%%%        Appendix B     %%%%%%%%%%
%%%%%%%%%%%%%%%%%%%%%%%%%%%%%%%%%%%%%%
\section{Oscillations of mean displacement} \label{appB}

\subsection{Asymmetry in the Sinai regime}
\label{apprec}
We prove Theorem~\ref{thm:RWREextra2}(I).

\begin{proof} 
To show that 
\begin{equation}\label{goalrec} 
\Big\{ \alpha \colon \langle\log\rho\rangle = 0,\, E^\mu_0[Z_n] \neq 0 \,\, \text{i.o.} \Big\} \neq \emptyset,
\end{equation}
define, for $x \in (0,1)$,
%%
%\begin{equation}\label{alphax}
$\alpha_x := x \delta_x + (1-x) \delta_{\eta(x)}$,
%\end{equation}
%%
where $\eta(x)\in(0,1)$ is defined by the relation $\langle\log \rho\rangle=0$, %i.e.,
%%
%\begin{equation}\label{etax}
%x\log\left(\frac{1-x}{x}\right) + (1-x) \log\left(\frac{1-\eta(x)}{\eta(x)}\right) = 0,
%\end{equation}
%%
which makes $\alpha_x$ recurrent. Let $\mu_x=\alpha_x^\Z$ (recall \eqref{alpha}), and consider the sets
\begin{equation}
A_n := \chv{x \in (0,1) \colon E^{\mu_x}_0\crt{Z_n}= 0}, \qquad n\in\N.
\end{equation}
By the implicit function theorem, $x \mapsto \eta(x)$ is analytic. Therefore $A_n$ is finite (otherwise $x \mapsto \bb{E}^{\mu_x}_0\crt{Z_n}$ would be constant equal to $0$, which is not the case because $\lim_{x \uparrow 1} E^{\mu_x}_0\crt{Z_n} = n$ and $\lim_{x \downarrow 1} E^{\mu_x}_0\crt{Z_n} = -n$). Consequently, $A := \cup_{n\in\N} A_n$ is countable and hence $A^c:=(0,1) \setminus A \neq \emptyset$. Now \eqref{goalrec} follows because, for any $x \in A^c$,
%%
%\begin{equation} 
$E^{\mu_x}_0[Z_n] \neq 0 \quad \forall\, n \in \N$.
%\end{equation}
%%
\end{proof}

%%%
\subsection{Asymmetry in the Gaussian regime}
In this section, we prove Theorem~\ref{thm:RWREextra2}(II).

\begin{proof}
Fix $s \in (2,\infty)$. To show that
\begin{equation}
\label{goaltrans}
\Big\{ \alpha \colon \langle\log\rho\rangle < 0,\, \langle\rho^s\rangle =1,
\,E^\mu_0[Z_n] \neq v_\mu n \,\, \text{i.o.} \Big\} \neq \emptyset,
\end{equation}
we proceed as above. Define $\alpha_x$ as in~\ref{apprec}, but define $\eta(x)\in(0,1)$ to satisfy
\begin{equation}\label{etax0}
x\prt{\frac{1-x}{x}}^s + (1-x) \prt{\frac{1-\eta(x)}{\eta(x)}}^s = 1,
\end{equation}
which implies that $\eta_x$ satisfies $\langle \rho \rangle <0$. Let $\mu_x= \alpha_x^{\Z}$, and consider the sets
\begin{equation}\label{sBn}
B_n := \chv{x\in (0,1) \colon E^{\mu_x}_0 \crt{Z_n} = v_\mu n}, \qquad n \in \N.
\end{equation}
By the implicit function theorem, $x \mapsto \eta(x)$ is analytic. Consequently, $B := \cup_{n\in\N} B_n$ is countable and hence $B^c := (0,1) \setminus B \neq \emptyset$. Now \eqref{goaltrans} follows because, for any $x \in B^c$, 
%%
%\begin{equation} %\label{eSigma}
$E^{\mu_x}_0\crt{Z_n} \neq v_\mu n \quad \forall\, n \in \N$.
%\end{equation}
%%
\end{proof}

%%%%%%% APPENDIX C %%%%%%%%%%%%%%%%%%%%%%%%%%%%%%%%%%%%%%%%%%%
\section{Bound on recurrent fluctuations} \label{appC}

We prove Theorem~\ref{thm:RWREextra2}(III). The line of proof was suggested by Zhan Shi.

\begin{proof}
$\mbox{}$
Throughout this section, $C$ is a constant that does not depend on $n$ and may vary from line to line.

\paragraph{Scaled potential process}
Define 
%%
%\begin{equation}
$U^{\omega,n}(t) := \frac{1}{\sigma_0\log n}\, U^\omega({\lfloor t \log^2 n \rfloor})$,
%\end{equation}
%%
where
\begin{equation}
U^{\omega}(k) = \left\{
\begin{array}{ll}
\sum_{i=1}^k \log \rho_i, \qquad &k \in \N,\\[0.2cm]
0, \qquad &k = 0,\\[0.2cm]
- \sum_{i=k+1}^{0} \log \rho_i, \qquad &k \in - \N.
\end{array}
\right.
\end{equation}
From \eqref{alpha} and \eqref{uellcond} it follows that 
%\begin{equation}
$t \mapsto U^{\omega,n}(t)$
%\end{equation} 
converges weakly to a Brownian motion. Let $\overline{b}^n$ be the position of the bottom of the smallest valley $(\overline{a}^n, \, \overline{b}^n, \, \overline{c}^n)$ of the process $(U^{\omega,n}(t))_{t \in \R}$, which contains the origin and has depth larger than 1 (for a formall definition of the smallest valley see~\cite[Sec. 2.5]{ZZ04}).  Similarly, for any $\delta > 0$, let $(\overline{a}^n_\delta, \, \overline{b}^n_\delta, \, \overline{c}^n_\delta)$ be the smallest valley containing the origin with depth larger than $1 + \delta$. We start with the decomposition
\begin{equation}
\frac{Z_n}{\log^2 n} = \Big( \frac{Z_n}{\log^2 n} - \overline{b}^n \Big) + \overline{b}^n =: \bar{B}_n + \overline{b}^n.
\end{equation}
To control the left-hand side above, it suffices to show that for any $\gep>0$ there is a $C\in (0,\infty)$ such that
\begin{eqnarray}
\label{mbottom}
E_\mu\big[\overline{b}^n\big] &\leq& \frac{C}{\log^{\frac{2}{3} - \gep}n},\\
\label{mdbottom}
E^\mu_0\crt{\bar{B}_n} &\leq& \frac{C}{\log^{\frac{2}{3}- \gep}n}.
\end{eqnarray}

%%%

\paragraph{$\bullet$ Decay of $E_\mu\big[\overline{b}^n\big]$}
The proof of~\eqref{mbottom} is done via a Skorohod embedding. It is organised in three parts. In the first part we define the Skorohod embedding. In the second part, using the Skorohod embedding we compare the bottom of the valley $\overline{b}^n$ of the scaled potential process with the bottom of the valley $\hat{b}^n$ embedded potential process. In this part we use Kolmogorov's inequality combined with estimates on the random times that define the embedding. The third part consists of comparing the bottom of the embedded valley with the bottom of the underlying Brownian motion that we used for the embedding. This part relies on the control of the oscilations between the random times in the embedding together with the relation between conditioned Brownian motion and the Bessel bridge.

\paragraph{Skorohod embedding} 
Let $\prt{B_t}_{t\in \R}$ be a two sided Brownian motion with $B_0 := 0$ defined on the~probability~space $(\hat{\Omega}, \hat{\mc{F}},\chv{\hat{\mc{F}_t}}_{t\in \R},\hat{P})$, endowed with the double sided filtration generated by $\prt{B_t}_{t \in \R}$ starting from $0$, i.e., $\hat{\mc{F}}_t = \sigma(B_{\frac{st}{\abs{t}}},0\leq s \leq \abs{t})$. By the Skorokhod embedding \cite[Thm 7.6.3, p. 404]{Dur96} for each $n$, there is a sequence of stopping times, $(\hat{T}_{n,k})_{k \in \Z}$ with $\hat{T}_{n,0}=0$ and satisfying 
\begin{equation}\label{eqdistemb}
U^{\omega,n}\prt{\frac{k}{\log^2 n}}\overset{(d)}{=}B_{\hat{T}_{n,k}}.
\end{equation}
Let $t_{n,k}:= k \log^{-2}n$ denote the jump times of the scaled potential process. From now on  
\begin{equation}
\prt{\hat{U}^{\omega,n}(t))}_{t \in \R}
\end{equation}
refers to the \emph{embedded potential process} determined by $\prt{B_t}_{t \in \R}$ with jump times $\hat{T}_{n,k}$. We denote by $(\hat{a}^n, \hat{b}^n, \hat{c}^n)$ the smallest valley of the process $(\hat{U}^{\omega,n}(t)))_{t \in \R}$ that contains the origin and has depth larger than $1$. We write $\hat{E}$ to denote expectation w.r.t.\ the embedded random variables $\prt{\log \rho_i}_{i \in Z}$ that regulate the jumps of the scaled and embedded potential processes. 

Let $(\hat{a}, \hat{b}, \hat{c})$ be the smallest valley of depth $1$ containing the origin of the Brownian motion $\prt{B_t}_{t\geq 0}$. Note that the distribution $\hat{b}$ is given by~\eqref{densityofV} and by symmetry: 
\begin{equation}\label{Vzeromean}
\hat{E}[\hat{b}]=0.
\end{equation}
Note first that $\bar{b}^n \leq \overline{c}^n - \overline{a}^n$. As shown in~\cite[Appendix C]{AdH17}, the random variable $\bar{J}^n:=\overline{c}^n - \overline{a}^n$ satisfies 
%\begin{equation} \label{zhanshi}
$\sup_p E\big[\abs{\bar{J}^n}^p\big]<\infty$.
%\end{equation} 
Note next that 
\begin{equation}\label{Jnbound}
\max\chv{|\bar{b}^n|,|\hat{b}^n|, |\hat{b}|} \leq \bar{J}^n.
\end{equation}

The general idea to prove \eqref{mbottom} is to find sets $A_n$ for which
\begin{equation}\label{ideabounds}
\hat{E}[\bar{b}^n \Ind{A_n}] \leq \frac{C}{\log^{\frac{2}{3}-\gep}n},
\qquad \hat{P}[A^c_n] \leq \frac{C}{\log^{\frac{2}{3}-\frac{\gep}{2}}n }.
\end{equation} 
To obtain \eqref{mbottom}, we use H\"older's inequality to bound $\hat{E}[\bar{b}^n \Ind{A^c_n}]$ by
\begin{equation}\label{holderconc}
%\hat{E}[\bar{b}^n \Ind{A^c_n}] \leq 
 \hat{E}[\bar{J}^n\Ind{A^c_n}] 
\leq \hat{E}\big[\abs{\bar{J}^n}^p\big]^{\frac{1}{p}} \prt{\frac{1}{\log^{\frac{2}{3}-\frac{\gep}{2}}n}}^{\frac{p-1}{p}} \leq \frac{C}{\log^{\frac{2}{3} - \gep}n},
\end{equation}
where the last inequality follows by taking $p$ sufficiently large. More specifically, to prove \eqref{mbottom} will show that there are sets $A_n$ and $E_n$ for which
\begin{align}
&\hat{E}\crt{\abs{\bar{b}^n - \hat{b}^n} \Ind{A_n}} \leq \frac{C}{\log^{\frac{2}{3}-\gep}n}, 
&& \hat{P}(A_n^c)\leq \frac{C}{\log^{\frac{2}{3}-\frac{\gep}{2}}n},
\label{1claimdecay}\\
&\hat{E}\crt{\abs{\hat{b}^n - \hat{b}}\Ind{(A_n \cap E_n^c)}} \leq \frac{C}{\log^{\frac{2}{3}-\gep}n}, 
&& \hat{P}(A_n^c \cup E_n)\leq \frac{C}{\log^{\frac{2}{3}-\frac{\gep}{2}}n}.
\label{2claimdecay}
\end{align}
Reasoning as in~\eqref{ideabounds}--\eqref{holderconc}, using \eqref{Vzeromean}~\eqref{Jnbound}, \eqref{1claimdecay} and \eqref{2claimdecay}, we obtain
\begin{equation}\label{mainbound}
\begin{aligned}
\hat{E}\big[\overline{b}^n\big] %&= \hat{E}\big[\overline{b}^n - \hat{b}^n+ \hat{b}^n - \hat{b}\big]
%\\
&\leq \hat{E}\left[|\overline{b}^n - \hat{b}^n|\right]+ \hat{E}\left[|\hat{b}^n - \hat{b}|\right]
\leq  \frac{C}{\log^{\frac{2}{3}-\gep} n}.
\end{aligned}
\end{equation}

In the next two paragraphs we will show~\eqref{1claimdecay} by comparing the deterministic times $t_{n,k}$ with the random times $\hat{T}_{n,k}$ with the help of moment estimates. After that we will show \eqref{2claimdecay} by comparing the location of the embedded minimum $\hat{b}^n$ with the location of the true minimum $\hat{b}$ with the help of estimates on Bessel bridges.

\paragraph{Comparing $t_{n,k}$ with $\hat{T}_{n,k}$}
Let $a^ n := \bar{a}^ n\log^ 2 n $, $b^ n := \bar{b}^ n \log^ 2 n $ and $c^ n := \bar{c}^ n \log^ 2 n $ and let  $J(n) := {c}^n - {a}^n$.  The times $(\hat{\tau}_{n,k}:=\hat{T}_{n,k}-\hat{T}_{n,k-1})_{k \in \Z}$ defined by the Skorokhod embedding theorem stated in~\cite[Thm 7.6.3]{Dur96} are i.i.d.\ and satisfy
\begin{equation}\label{stopmoments}
\begin{aligned}
&\hat{E}\crt{\hat{\tau}_{n,k}} =  \hat{E}\crt{\prt{\frac{\log \rho_0}{\sigma_0\log n}}^2} = \frac{1}{\log^2 n},\\
&\hat{E}\crt{\hat{\tau}_{n,k}^2} \leq C \hat{E} \crt{\prt{\frac{\log \rho_0}{\sigma_0\log n}}^4}
< \frac{C}{\log^4n}.
\end{aligned}
\end{equation}
Furthermore, since $B^{2k} - p_k(t)$ is a martingale for some polynomial $p_k(t)$ of degree $k$, the optional stopping theorem and~\eqref{uellcond} give
\begin{equation}\label{2kmom}
\hat{E}\crt{\prt{\hat{\tau}_{n,k}}^{k}} 
\leq C \hat{E}\crt{B_{\hat{\tau}_{n,k}}^{2k}} 
= C \hat{E}\crt{\prt{\frac{\log \rho_0}{\sigma_0\log n}}^{2k}} 
\leq  \frac{C}{\log^{2k}n}.
\end{equation}
Therefore, by Markov's inequality, for any $k \in \N$,
\begin{equation}\label{stopbound}
\hat{P}\prt{\hat{\tau}_{n,k}>2\frac{\sigma_\mu}{\log^{2-\gep} n} } 
\leq \frac{\prt{\log  n}^{k( 2-  \gep)}}{\sigma_\mu^2} \frac{C}{\log^{2k}n} 
\leq \frac{C}{\log^{k \gep} n}.
\end{equation}
For
%\begin{equation}\label{choosek}
$k \gep-2>2 + 2\gep$,
%\end{equation}
and any fixed $J_0 >0$, a union bound gives that
\begin{equation}\label{stopunion}
\begin{aligned}
&\hat{P}\prt{\exists \, k \leq J(n) \colon \hat{\tau}_{n,k}>2\frac{\sigma_\mu}{\log^{2-\gep}n },\,\, \frac{J(n)}{\log^2n} \leq J_0 }
\leq \frac{C}{\log^{2 + 2 \gep} n}.
\end{aligned}
\end{equation}
Abbreviate $\bar{J}^n := J(n) \log^{-2}n$ and define the set
\begin{equation}\label{goodset}
A_{n} := \left\{\omega \colon \sup_{k\leq J(n)}\hat{\tau}_{n,k}<2\frac{\sigma_\mu}{\log^{2-\gep} n},
\bar{J}(n) \leq  (\log\log^4 n) \right\}.
\end{equation}
We have
\begin{equation}\label{Jngrowth}
\hat{P}\prt{\bar{J}_n > \log\log^4 n} \leq c_1\hat{P}\prt{\sup_{t \in [0,\log\log^4 n]} \abs{B_t}<1}\leq \frac{C}{\log^4 n},
\end{equation}
where $c_1$ stands for a constant that takes into account the double-sided necessary estimates to the right and to the left of the origin. Furthermore, the constant $c_1$ also absorbs the uniform approximation error of the discrete walk, with respect to the Brownian motion. From~\eqref{stopunion} and~\eqref{Jngrowth} it follows that
\begin{equation}\label{meshdecay}
 \hat{P}\prt{A^c_{n}} \leq \frac{C}{\log^{2 + 2\gep}n}.
\end{equation}
Therefore, on $A_n$, using that $\prt{\hat{\tau}_{n,k} - \log^{-2} n}_{k \in \Z}$ is a sequence of i.i.d.\ mean zero random variables, by Kolmogorov's inequality  and~\eqref{stopmoments} it follows that, for any $\gep >0$,
\begin{equation}\label{supdecay}
\begin{aligned}
&\hat{P} \prt{\sup_{a^n \leq j \leq c^n} t_{n,j} - \hat{T}_{n,j}> \frac{1}{\log n},\,\, A_{n}} \\
&\quad \leq \hat{P}\crt{\sup_{j \leq \log \log^4 n}\sum_{k = 0}^j\hat{\tau}_{n,k} - \log^{-2} n  > \frac{1}{\log n}}\\
&\quad \leq (\log^2n)\hat{E}\crt{\prt{\sum_{k = 0}^{\log (\log^4n)}{\hat{\tau}_{n,k} - \log^{-2} n}}^2}\\
&\quad \leq (\log\log^4n) \log^2 n\, \frac{C}{\log^4 n} \leq \frac{C}{ \log^{2 - \frac{\gep}{2}} n}.
\end{aligned}
\end{equation}
Let 
\begin{equation}
A_{n,\leq } := \left\{\omega \colon \sup_{a^n \leq j \leq c^n} t_{n,j} - \hat{T}_{n,j} \leq  \frac{1}{\log n}\right\}.
\end{equation}
Since $\hat{b}_n  = \hat{T}_{n,b^n}$, by~\eqref{meshdecay} and~\eqref{supdecay}, and arguing as in~\eqref{ideabounds}--\eqref{holderconc}, we get that
\begin{equation}\label{disxembed}
\begin{aligned}
&\hat{E}\crt{\abs{\overline{b}^n - \hat{b}^n }} \leq \hat{E}\crt{\abs{\overline{b}^n - \hat{b}^n }\Ind{A_n}} + \hat{E}\crt{\abs{\overline{b}^n - \hat{b}^n }\Ind{A^c_n}}\\
&\quad \leq \frac{1}{\log n} + \hat{E}\crt{\abs{\bar{J}^n }\Ind{A_{n,\leq}^c}\Ind{A_n}} 
+ \hat{E}\crt{\abs{\bar{J}^n}\Ind{A^c_n}}
 \leq \frac{C}{\log^{\frac{2}{3}- \gep}n}.
\end{aligned}
\end{equation}

\paragraph{Comparing $\hat{b}^n$ with $\hat{b}$}

To prove \eqref{mbottom} it suffices to show that 
\begin{equation}\label{bottomdist}
\hat{E}\left[|\hat{b}_n - \hat{b}| \Ind{A_{n}}\right]  \leq\frac{C}{\log^{\frac{2}{3}-\gep} n}.
\end{equation}
To prove~\eqref{bottomdist} we first note that, conditioned on $\hat{b}$ being the bottom of the valley $(\hat{a},\hat{b},\hat{c})$ of depth 1, the trajectory of the Brownian motion $B_t$ behaves as a two-sided Bessel bridge of dimension $3$ (see~\cite{Pit75}). Taking the point $(\hat{b}, B_{\hat{b}})$ to be the origin, we see that the two bridges we are considering can be described by
\begin{align}
dX_t &=  \left(\frac{1}{X_t} +\frac{1}{1-X_t} + \frac{X_t-1}{t-(\hat{c}-\hat{b})}\right)\,dt 
+ dB_t \quad \text{ for } t \leq \hat{c}-\hat{b},\label{rightbessel} \\ 
dX_t &=  \prt{\frac{1}{X_t} +\frac{1}{1-X_t} + \frac{X_t-1}{t-(\hat{b}-\hat{a})}}\,dt 
+ dB_t\quad \text{ for } t \leq \hat{b}-\hat{a}.
\label{leftbessel}
\end{align}
By the symmetry of Brownian motion, it suffices to analyse \eqref{rightbessel}, which, in integral form, for $t \leq (\hat{c}-\hat{b})$ reads as
\begin{equation}\label{besselint}
Y_t =  \int_0^t\prt{\frac{1}{X_s} +\frac{1}{X_s-1} + \frac{1-X_s}{s-(\hat{c}-\hat{b})}} ds +  B_t. 
\end{equation}
By the invariance  of Brownian motion, 
%\begin{equation}\label{eqdistbm}
$B_1 \overset{(d)}{=}-B_1 \overset{(d)}{=}\frac{B_t}{\sqrt{t}}$.
%\end{equation}
Then, by~\eqref{besselint}, for any $\delta>0$ and $\eta \in (0,\tfrac14)$ we get
\begin{equation}\label{escapeb}
\begin{aligned}
\hat{P}(\sup_{t \in [0,\delta]}Y_t< \eta)  
&\leq  \hat{P} \left(\frac{1}{2}\eta^{-1} \delta+ B_{\delta}<\eta\right)
\\
&=\hat{P} \left(\frac{1}{2}\eta^{-1}\delta^{\frac{1}{2}} - \eta\delta^{-\frac{1}{2}}< B_1\right).
\end{aligned}
\end{equation}
Next, by taking $\delta_n = \log^{-\alpha} n$, $\eta_n = \log^{-\beta}n<\tfrac14$ with $\beta = \tfrac13 - \tfrac14\gep$ and $\alpha = \tfrac23 -\gep$, with $\gep>0$ sufficiently small, we get
\begin{equation}\label{eventually}
\begin{aligned}
\hat{P}\left(\exists\,  t \leq \delta_n \text{ with } Y_t > \eta_n\right)&
=1 - \hat{P}(\sup_{t \in [0,\delta_n]}Y_t< \eta_n)  \\
&\geq 1 - \hat{P}\left(\frac{1}{2}\log^{\frac{\gep}{2}}n - \log^{-\frac{\gep}{2}}n < B_1\right)\\
&\geq 1 - \frac{1}{\exp(c\log^{\gep}n)},
\end{aligned}
\end{equation}
for some $c>0$. Now let 
%\begin{equation}\label{Besmall}
$B(\delta_n,\eta_n) := \chv{\omega \colon\, \exists\,  t \leq \delta_n \text{ with } Y_t > \eta_n}$
%\end{equation}
and note that
\begin{equation}\label{Beleaves}
\hat{P}\prt{B(\delta_n,\eta_n)} \leq \frac{1}{\exp(c\log^{\gep}n)}.
\end{equation}
By the construction of the Skorohod embedding and by \eqref{uellcond}, 
\begin{equation}\label{accuracy}
\sup_{k\in\N}\sup_{t\in [T_{n,k-1},T_{n,k}]}\abs{B_t - B_{\hat{T}_{n,k-1}}}
\leq \log{\frac{1-\mathfrak{c}}{\mathfrak{c}} } \frac{1}{\log n}.
\end{equation}
So, on the event $B(\delta_n,\eta_n)$, $|\hat{b}^n - \hat{b}| > \frac{1}{\log^{\frac{2}{3} -\gep}n} $ implies
\begin{equation}\label{godown}
\inf_{t \leq \hat{c}-\hat{b}}Y_t < \frac{1}{\log^{1 - \gep}n} \text{ when } Y_0  = \frac{1}{\log^{\frac{1}{3}-\frac{\gep}{4}}n}.
\end{equation}
Let
\begin{equation}\label{eventdown}
E_n := \left\{\omega \colon \inf_{t \leq \hat{c}-\hat{b}}Y_t < \frac{1}{\log^{1 - \gep}n} 
\text{ given } Y_0  = \frac{1}{\log^{\frac{1}{3} - \frac{\gep}{4}}n}\right\}.
\end{equation} 
From the hitting times for Bessel processes~\cite[Problem 3.3.23, p.162]{KarShr98} it follows that 
\begin{equation}\label{besselhit}
\hat{P}\prt{E_n} \leq \frac{C}{\log^{\frac{2}{3} - \frac{3\gep}{4}}n}.
\end{equation}
Noting that $\bar{J}_n \leq \log \log^4 n$ on $A_n$ and using \eqref{Jnbound} and \eqref{godown}, we get
\begin{equation}\label{botsplit}
\begin{aligned}
&\hat{E}\crt{|\hat{b}_n - \hat{b}|\,\Ind{A_{n}}} \\
&\quad \leq \frac{1}{\log^{\frac{2}{3}-\gep} n } + \hat{E}\crt{|\hat{b}_n - \hat{b}|\, 
\Ind{A_n} \Ind{\left\{|\hat{b}^n - \hat{b}| > \log^{-(\frac{2}{3}-\gep)} n\right\}}}\\
&\quad \leq \frac{1}{\log^{\frac{2}{3}-\gep} n } + \log\log^4 n \prt{\hat{E}\crt{1_{E_n}} 
+ \hat{E}\crt{\Ind{\left(B(\delta_n, \eta_n)\right)^c}}  }\\
&\quad \leq \frac{C}{\log^{\frac{2}{3} - \gep}n},
\end{aligned}
\end{equation}
where the last inequality uses~\eqref{Beleaves} and \eqref{besselhit}. Using~\eqref{botsplit},~\eqref{disxembed}, and~\eqref{mainbound}, we conclude the proof of \eqref{mbottom}.
 	
%%%

\paragraph{$\bullet$ Decay of $E_0^\mu[\bar{B}_n]$}
	
It remains to show~\eqref{mdbottom}. We follow~\cite[pp. 249--251]{ZZ04}, with appropriate modifications. We define the set of ``good environments", with $\delta$ and $J$ both depending on $n$, as
\begin{equation}
A_n^{J,\delta} := \left\{ \omega \colon
\begin{array}{lll}
\overline{b}^n = \overline{b}^n_\delta,\\
\text{any refinement} (a, b, c) \text{ of }(\overline{a}^n_\delta, \overline{b}^n, \overline{c}^n_\delta) \\
\qquad\qquad\text{with }b \neq \overline{b}^n\text{ has depth }< 1-\delta,\\
|\overline{a}^n_\delta| + |\overline{c}^n_\delta| \leq J, \\
\inf_{t-\bar{b}^n>\delta} B_t - B_{\hat{b}}> \delta^{\frac{3}{2}},
\end{array}
\right\}
\end{equation}
with $\delta$ and $J$ chosen as
\begin{equation}\label{choice}
\delta = \delta(n) := \frac{1}{\log^r n}, \qquad J  = J(n):= \log \log^4 n,
\end{equation}
with $r\in (0,1)$ a parameter to be fixed later. From now on, we simply write $P$ and $E$ for the annealed measure and corresponding expectation, as well as for the measure of the underlying Brownian motion that was used for the embedding in the previous paragraph and its corresponding expectation.

Recall that $\bar{B}_n = \frac{Z_n - b^n}{\log^2 n}$. Let 
\begin{equation} \label{hitac}
G_n := \{\omega \colon (X_i), 0 \leq i \leq n \text{ hits the boundary of } [a^n_\delta, c^n_\delta] \}.
\end{equation}
With these definitions, we split $E[|\bar{B}_n|]$ as 
\begin{equation}
\begin{aligned}
&E[|\bar{B}_n|] = E \left[ |\bar{B}_n| \Ind{(A_n^{J,\delta})^c}  \right] + E \left[ |\bar{B}_n| \Ind{A_n^{J,\delta}} \right]\\
&\quad = {\rm I}_n+  E \left[ |\bar{B}_n| \Ind{A_n^{J,\delta}} \Ind{\{\overline{b}^n < 0\}} \right]  +E\left[ |\bar{B}_n| \Ind{A_n^{J,\delta}} \Ind{\{\overline{b}^n > 0\}} \right]\\
&\quad = {\rm I}_n + {\rm II}_n +E\left[ |\bar{B}_n| \Ind{A_n^{J,\delta}} \Ind{\{\overline{b}^n > 0\}} \Ind{G_n^c} \right] \\
&\qquad\qquad\qquad\qquad+ E\left[ |\bar{B}_n| \Ind{A_n^{J,\delta}} \Ind{\{\overline{b}^n > 0\}} \Ind{G_n} \right]\\
&\quad = {\rm I}_n + {\rm II}_n + {\rm III}_n + {\rm IV}_n,
\end{aligned}
\end{equation}
where
\begin{equation}
\begin{array}{lr}
{\rm I_n}  := E \left[ |\bar{B}_n| \Ind{(A_n^{J,\delta})^c}  \right],&
{\rm III}_n := E\left[ |\bar{B}_n| \Ind{A_n^{J,\delta}} \Ind{\{\overline{b}^n > 0\}} \Ind{G_n^c} \right], \\[0.2cm]
{\rm II_n} := E \left[ |\bar{B}_n| \Ind{A_n^{J,\delta}} \Ind{\{\overline{b}^n < 0\}} \right],
&{\rm IV}_n  := E\left[ |\bar{B}_n| \Ind{A_n^{J,\delta}} \Ind{\{\overline{b}^n > 0\}}\Ind{G_n} \right].
\end{array}
\end{equation}
To prove \eqref{mdbottom}, it suffices to show that there is a constant $C>0$ for which
\begin{equation}\label{Isuffice}
\begin{aligned}
\max\chv{{\rm I}_n, {\rm II}_n ,{\rm III}_n ,{\rm IV}_n} \leq \frac{C}{\log^{\frac{2}{3} -\gep}n}.
\end{aligned}
\end{equation}	
In what follows we will show that this bound holds for each of the above terms.

%%%

\paragraph{Estimate of ${\rm I}_n$}
The estimate of ${\rm IV}_n$ follows directly from the definition of $A_n^{J,\delta}$.  By H\"older's inequality, for $p, q > 1$ with $\tfrac{1}{p} + \tfrac{1}{q} = 1$,
\begin{equation}\label{ivhol}
{\rm IV}_n \leq E\left[ |\bar{B}_n|^p \right]^{\tfrac{1}{p}} P((A_n^{J,\delta})^c)^{\tfrac{1}{q}}.
\end{equation}
Since $\sup_{n\in\N} E\left[ |\bar{B}_n|^p \right] < \infty$ (see~\cite{AdH17}), it suffices to estimate 
%\begin{equation}
$P((A_n^{J,\delta})^c)$.
%\end{equation}
The definition of $A_n^{J,\delta}$ consists of four conditions. Therefore we estimate
\begin{equation}\label{vconditions}
\begin{aligned}
P((A_n^{J,\delta})^c)
&\leq P(\bar{b}^n \neq \bar{b}^n_\delta)  \\
&\quad +P( \exists \text{  refinement }(a, b, c) \text{ of }(\overline{a}^n_\delta, \overline{b}^n, \overline{c}^n_\delta) \\
&\qquad\qquad\qquad\text{ with }  b \neq \overline{b}^n \text{ and depth }> 1-\delta)\\
&\quad + P\prt{|\overline{a}^n_\delta| + |\overline{c}^n_\delta| > J}\\
&\quad +  P ( \inf_{t-\bar{b}^n>\delta} B_t - B_{\hat{b}}< \delta^{\frac{3}{2}}).
\end{aligned}
\end{equation}
Note that
\begin{equation}\label{valleybessel}
\begin{aligned}
\chv{\bar{b}^n \neq \bar{b}^n_\delta}&\subset \Big\{\exists\text{  refinement }(a, b, c) 
\text{ of }(\overline{a}^n_\delta, \overline{b}^n, \overline{c}^n_\delta) \\
&\quad\qquad\qquad\text{ with } 
b \neq \overline{b}^n \text{ and depth }> 1-\delta\Big\}.
\end{aligned}
\end{equation}
Furthermore, the probability of having a valley of depth larger than $1-\delta$ is bounded from above by the probability for a Bessel bridge starting from $1$ to reach a value smaller than $\delta$, which in turn is bounded from above by the probability for the infimum of a Bessel process of dimension $3$ starting from $1$ to be smaller than $\delta$. By the estimate for hitting times of Bessel process, it follows that
\begin{equation} \label{valleybound}
\begin{aligned}
&P\left(\exists\text{ refinement }(a, b, c) \text{ of } (\overline{a}^n_\delta, \overline{b}^n, 
\overline{c}^n_\delta) \right.\\
&\qquad\qquad\left.\text{ with } b \neq \overline{b}^n \text{ and depth }> 1-\delta\right) \\
&\quad \leq 2P(\text{ Bessel process of dimension } 3 \\
&\qquad \qquad  \text{ started from 1 reaches a value smaller than } \delta) \\ 
&\quad \leq 2  \delta \leq \frac{2}{\log^r n},
\end{aligned}
\end{equation}
where the factor $2$  takes into account the double-sided necessary estimates (to the right and to the left of $\bar{b}^n$). Combining \eqref{valleybessel} with \eqref{valleybound}, we get  
\begin{equation}\label{midway}
P((A_n^{J,\delta})^c)
\leq \frac{C}{\log^r n}
+ P\prt{|\overline{a}^n_\delta| + |\overline{c}^n_\delta| > J}
+  P (\inf_{t-\bar{b}^n>\delta} B_t - B_{\hat{b}}< \delta^{\frac{3}{2}}).
\end{equation}
To estimate the remaining terms in  \eqref{vconditions}, we first note that, by \eqref{Jngrowth},
\begin{equation} \label{valleylenght}
\begin{aligned}
&P(\abs{\bar{a}^n_\delta} + \abs{\bar{c}^n_\delta}>J ) \leq \frac{C}{\log^4 n}.
\end{aligned}
\end{equation}
The last term in \eqref{vconditions} can be bounded via the same reasoning used in \eqref{Beleaves} and~\eqref{besselhit}, and so we get that
\begin{equation}\label{bessel}
P\left(\inf_{t-\bar{b}^n>\delta} B_t - B_{\hat{b}}> \delta^{\frac{3}{2}}\right) \leq \frac{C}{\log^{r}n}.
\end{equation}
Therefore, with $r =\tfrac{2}{3}$ in \eqref{choice} it follows from~\eqref{midway},~\eqref{valleylenght} and~\eqref{bessel} that there is a choice of $p,q$ in \eqref{ivhol} such that
%\begin{equation}\label{AdJ}
${\rm I}_n  \leq \frac{C}{\log^{\frac{2}{3}-\gep} n}$.
%\end{equation}

%%%

\paragraph{Estimate of ${\rm II}_n$}
The estimate of ${\rm II}_n$  is analogous to  ${\rm III}_n + {\rm IV}_n$.
	
%%%

\paragraph{Estimate of ${\rm III}_n$}
Before proving the this estimate, we recall the expression for the hitting times as stated in~\cite[p.196 (2.1.4)]{ZZ04}): for $a < x < b$,
\begin{equation} \label{hitting}
\begin{aligned}
P^\omega_x(H_a < H_b) = \frac{\sum_{i=x}^{b-1} \exp{U^\omega(i)}}
{\sum_{i=a}^{b-1} \exp{U^\omega(i)}}, \\[0.2cm]
P^\omega_x (H_b < H_a) = \frac{\sum_{i=a}^{x-1} \exp{U^\omega(i)}}{\sum_{i=a}^{b-1} \exp{U^\omega(i)}},
\end{aligned}
\end{equation}
where, for any $y \in \Z$,
%%
%\begin{equation}\label{Hdef}
$H_y := \inf\{ i \in \N_0 \colon  Z_i = y \}$.
%\end{equation}
%%
On the event $E^c_n \cap A^{J,\delta}_{n} \cap \{\bar{b}^n >0\}$ the random walk $\prt{Z_t}_{t \in \N_0}$ is equivalent to the reflecting random walk  at $a^n$  denoted by $(\tilde{Z}_t)_{t \in \N_0}$. More formally $\tilde{Z}_t$ is the random walk in the environment $\overline{\omega}_z := \omega_z$ for $z > a_\delta^n$, $\overline{\omega}^{\, +}_{a_\delta^n} = 1$ and $\overline{\omega}^{\, +}_{a_\delta^n-1} = 0$. Therefore, for $\omega \in A^{J,\delta}_n$,
\begin{equation} \label{pf_ineg1}
\begin{aligned}
&E^\omega_{0} \left( \left| \frac{Z_t}{\log^2 n} - \overline{b}^n \right|\,,\, G^c_n \right)\\
&\quad \leq \log \log^4 n \,P^\omega_0(H_{b^n}>n)
+ E_{0}^\omega \left( \left| \frac{\tilde{Z}_t}{\log^2 n} - \overline{b}^n \right|\Ind{\chv{T_{b^n} <n}}\right) \\
&\quad \leq \log\log^4 n \,P^\omega_0(T_{b^n}>n)%\quad 
+ \max_{t\in [0, \, n] \cap \Z} E_{b^n}^\omega \left( \left| \frac{\tilde{Z}_t}{\log^2 n} - \overline{b}^n \right|\right).
\end{aligned}
\end{equation}
The arguments that lead to \cite[Eqs.\ (2.4.4)--2.5.5, pp. 249-250]{ZZ04}, imply that 
\begin{equation}\label{missbottom}
P^\omega_0(H_{b^n}>n) \leq \frac{C}{\exp(2^{-1}\delta_n\log n) } \leq \frac{C}{\exp\prt{2^{-1}\log^{1 - r }n}}.
\end{equation}
Therefore, for any $r<1$ there is a $C>0$ for which
\begin{equation}\label{missdecay}
(\log \log^4 n)  \,P^\omega_0(T_{b^n}>n)\leq \frac{C}{\log^{\frac{2}{3} - \gep}n}.
\end{equation}
To estimate the second term in the right-hand side of \eqref{pf_ineg1}, we follow~\cite[pp.250--251]{ZZ04}. Define
\begin{equation}\label{inv}
\begin{aligned}
f(z) &:= \frac{\prod_{a_\delta^n +1 \leq i < z} \omega_i}{\prod_{a_\delta^n +1 \leq i < z} (1 -\omega_{i+1})} 
= \frac{(1-\omega_{a_\delta^n +1})}{\omega_z} \, n^{-[U^{\omega,n}(z)-U^{\omega,n}(a_\delta^n)]}, \\[0.2cm]
\overline{f}(z) &:= \frac{f(z)}{f(b^n)}.
\end{aligned}
\end{equation}
For $g\colon\,\Z \to \R$, let 
%\begin{equation}
$\nu_g= \sum_{z \in \Z} \delta_{z} g(z)$,
%\end{equation}
where $\delta_{z}$ is the Dirac measure concentrated at $z$. The one-step transition operator of this process $\mc{A}$ acts on a measures on $\Z$ as follows: 
\begin{equation}\label{action}
\prt{\nu\mc{A}}(z) := \overline{\omega}_{z-1} \, \nu(z-1) + (1 - \overline{\omega}_{z+1}) \, \nu(z+1).
\end{equation}
Note that, by~\eqref{inv} and \eqref{action}, $\nu_{\overline{f}}\mc{A} = \nu_{\overline{f}}$. In words, $\nu_{\overline{f}}$ is an invariant measure for the reflecting random walk $(\overline{Z}_t)_{t \geq 0}$. Since $\overline{f}(z) \geq \Ind{b^n}(z)$ for all $z$, and $g\mathcal{A}  \geq 0$ for all $g \geq 0$,  we obtain that
\begin{equation}\label{invbound}
\begin{aligned}
P^\omega_{b^n} (\overline{Z}_t = z) &= \nu_{\Ind{b^n}} \mathcal{A}^t(z) \, 
\leq \nu_{\overline{f}} \mathcal{A}^t(z) = f(z)\\
&=\frac{\omega^+_{b^n}}{\omega^+_z} \, n^{-[U^{\omega,n}(z) - V(b^n)]}
\leq \frac{1}{\mathfrak{c}} \, n^{-[U^{\omega,n}(z) - U^{\omega,n}(b^n)]},
\end{aligned}
\end{equation}
the last inequality being a consequence of the uniform ellipticity assumption ($\omega_0^+ \geq \mathfrak{c}$).
Note now that, for $\omega \in A^{J,\delta}_n$,
\begin{equation}
\abs{z-b^n}>\delta_n \Longrightarrow U^{\omega,n}(z) -U^{\omega,n}(\bar{b}_n)\geq \delta^{\frac{3}{2}}_n = \frac{1}{\log^{\frac{3r}{2}}n}.
\end{equation} 
Hence, uniformly in all $t \geq \Z_+$, for any $r<\tfrac{2}{3}$ there is a  $C>0$ for which
\begin{equation} \label{II}
\begin{aligned}
& E^\omega_{b^n} \left( \abs{ \frac{\overline{Z}_t}{\log^2 n} - \overline{b}^n } \right) 
= \sum_{z\in[a_\delta^n,c_\delta^n]\cap\Z} P^\omega_{b^n} (\overline{Z}_t = z) \, 
\abs{ \frac{z}{\log^2 n} - \overline{b}^n }\\
&\quad \leq \frac{1}{\mathfrak{c} \, \log^2 n} \sum_{z\in[a_\delta^n,c_\delta^n]\cap\Z} \, 
|z - b^n| \, n^{-[U^{\omega,n}(z) - U^{\omega,n}(b^n)]}\\
&\quad \leq  2\delta_n + \frac{(J \log^2n)^2}{\mathfrak{c}\log^2n} \sup_{z - b^n > \delta \log^2 n}e^{-\log n[V(z)-v(b^n)]} \\
&\quad \leq \frac{2}{\log^rn} + C\log^2 n \prt{\log \log^4 n} e^{- \log^{(1-\frac{3r}{2})}n} \leq \frac{C}{\log^r n},
\end{aligned}
\end{equation}
which yields the bound for ${\rm III}_n$.
	
%%%

\paragraph{Estimate of ${\rm IV}_n$}
 
By H\"older's inequality, for $p, q > 1$ with $\tfrac{1}{p} + \tfrac{1}{q} = 1$,
\begin{equation}\label{HolderIn}
\begin{aligned}
{\rm I}_n &\leq E \left[ |\bar{B}_n| \Ind{A_n^{J,\delta}}\Ind{\chv{\bar{b}^n >0}}\Ind{G_n} \right] \\
&\leq E\left[ |\bar{B}_n|^p \right]^{\tfrac{1}{p}} \bb{P} (\chv{\bar{b}^n >0} \cap A_n^{J,\delta}\cap G_n)^{\frac{1}{q}}.
\end{aligned}
\end{equation}
As $\sup_n E\left[ |\bar{B}_n|^p \right] < \infty$, we estimate $\bb{P}(\chv{\bar{b}^n >0} \cap A_n^{J,\delta}\cap G_n)$. Define
\begin{equation}
\begin{aligned}
H_{b,n}&:= \inf \{ i \geq 0 \colon \, Z_i = b^n \},\\
H_{a,b,n} &:= \inf \{ i \geq 0 \colon \, Z_i = b^n \text{ or } Z_i = a^n_\delta \}.
\end{aligned}
\end{equation}
Then, by~\eqref{hitting}, we have (this is the same inequality as in~\cite[Eq.\ (2.5.4)]{ZZ04})
\begin{equation}\label{hita}
P^\omega_0 (Z_{H_{a,b,n}} = a^n_\delta)
\leq \frac{J \log^2 n}{n^\delta}.
\end{equation}
Again, denote by $(\overline{Z}_t)_{t\geq 0}$ the random walk in random environment with a reflecting barrier at $a^n_\delta$, and let $\overline{H}_{a,b,n}$ be the analogue of $H_{a,b,n}$ for $(\overline{Z}_t)_{t \geq 0}$, and $\overline{H}_{a,n}$ be the hitting time of $b^n$ by the reflecting walk. Then, by \eqref{uellcond},
\begin{equation} \label{Hexp}
\begin{aligned}
E^\omega_0 \left[ H_{a,b,n} \right] &%= E^\omega_0 \left[ \overline{H}_{a,b,n} \right] 
\leq E^\omega_0 \left[ \overline{H}_{b,n} \right] = \sum_{i=1}^{b^n} 
\sum_{j=0}^{i-1-a^n_\delta} \frac{1}{\omega_{i-j-1}} \prod_{k=1}^j \rho(i-k)\\
&\leq \frac{1}{\mathfrak{c}} \sum_{i=1}^{b^n} \sum_{j=0}^{i-1-a^n_\delta} 
\exp^{(U^{\omega,n}(i)-U^{\omega,n} (i-j)) \log n}\\
& \leq \frac{ \prt{2J \log^2 n}^2}{\mathfrak{c}} \, \exp^{(1 - \delta) \log n},
\end{aligned}
\end{equation}
see~\cite[p.250]{ZZ04}. Consequently, for $\omega \in A_n^{J,\delta}$ satisfying $\overline{b}^n > 0$, by \eqref{hita} and \eqref{Hexp} and Markov' s inequality, we obtain
\begin{equation} \label{2.5.5}
\begin{aligned}
 & P^\omega (H_{b,n} \geq n) \leq P^\omega(H_{a,b,n}< n,Z_{H_{a,b,n}}=a^n_\delta) 
 + P^\omega(H_{a,b,n}\geq n)  \\
 & \frac{J \log^2 n}{n^\delta} + \frac{1}{n} \frac{ 2 (J \log n)^2}{\mathfrak{c}} \, 
 e^{(1-\delta)(\log n)} = \frac{J \log^2 n + \frac{2 (J \log n)^2}{\mathfrak{c}}}{n^\delta}.
\end{aligned}
\end{equation}
This is the analogue of~\cite[Eq.\ (2.5.5)]{ZZ04} and says that, with overwhelming probability, the random walk hits $b^n$ before time $n$. Let us now argue that, with overwhelming probability, after hitting $b^n$, the random walk will come back to $b^n$ before hitting either $a_\delta^n$ or $c_\delta^n$. By~\eqref{hitting}, for all  $\omega\in A_n^{J,\delta}$, 
\begin{equation}
\begin{aligned}
P^\omega_{b^n -1} \left(\text{$(Z_i)_{i\in\N_0}$ hits $a^n_\delta$ before $b^n$}\right)
&\leq n^{- (1 + \frac{\delta}{2})}, \\
P^\omega_{b^n+1} \left(\text{$(Z_i)_{i\in\N_0}$ hits $c^n_\delta$ before $b^n$}\right) 
&\leq n^{- (1 + \frac{\delta}{2})}.   
\end{aligned}
\end{equation}
Compare with~\cite[Eq.(2.5.6)]{ZZ04}.]  As such, for $\omega \in A_n^{J,\delta}$  the $P^\omega$-probability of the event that ``after hitting $b^n$, the random walk exits $[a_\delta^n, \, c_\delta^n]$ within the next $n$ steps'' is bounded by
\begin{equation}
1 - \left( 1 - n^{- (1 + \frac{\delta}{2})}\right)^n \leq \frac{C}{n^{\frac{\delta}{2}}},
\end{equation}
%%
%for some $C>0$. 
Combined with~\eqref{2.5.5}, this gives
\begin{equation} \label{En}
P^\omega(A_n^{J,\delta} \cap G_n) \leq \frac{J \log^2 n + \frac{2 (J \log n)^2}{\mathfrak{c}}}{n^\delta} 
+ \frac{2}{n^{\frac{\delta}{2}}} \leq \frac{C}{n^{\frac{\delta}{2}}}.
\end{equation}
Applying H{\"o}lder's inequality and the $L^p$ bound $\sup_{p,n\in\N} E[\abs{\bar{B}_n}^p]<\infty$, we find that
${\rm IV}_n \leq \frac{C}{n^{\tfrac{\delta}{3}}} \leq \frac{C}{\log^{4}n}$,
which proves the desired estimate in~\eqref{mdbottom}.
\end{proof}

%%%%%%%%%%%%%%%%%%%%%%%%%%%%%%%%%%%%%%%%%%%%%%%%%%

%\bibliographystyle{plain}
%\bibliography{RWCRE3}

\begin{thebibliography}{10}
	
	\bibitem{ACdCdH18}
	L.~Avena, Y.~Chino, C.~da~Costa, and F.~den Hollander.
	\newblock Random walk in cooling random environment: ergodic limits and
	concentration inequalities.
	\newblock {\em Electron. J. Probab.}, 24:1--35, 2019.
	
	\bibitem{AdH17}
	L.~Avena and F.~den Hollander.
	\newblock {\em Random walks in cooling random environments}.
	\newblock Springer Nature, New York, 2019.
	
	\bibitem{Bil95}
	P.~Billingsley.
	\newblock {\em Probability and {M}easure}.
	\newblock Wiley, 1995.
	
	\bibitem{Dur96}
	R.~Durrett.
	\newblock {\em Probability: {T}heory and {E}xamples}.
	\newblock Duxbury Press, Belmont, CA, second edition, 1996.
	
	\bibitem{HaeJoo88}
	E.~Haeusler and K.~Joos.
	\newblock A nonuniform bound on the rate of convergence in the martingale
	central limit theorem.
	\newblock {\em Ann. Probab.}, 16(4):1699--1720, 1988.
	
	\bibitem{Jir16}
	M.~Jirak.
	\newblock Berry-{E}sseen theorems under weak dependence.
	\newblock {\em Ann. Probab.}, 44(4):2024--2063, 2016.
	
	\bibitem{KarShr98}
	I.~Karatzas and S.E. Shreve.
	\newblock {\em Brownian {M}otion and {S}tochastic {C}alculus}.
	\newblock Graduate Texts in Mathematics. Springer New York, 2nd edition, 1998.
	
	\bibitem{K86}
	H.~Kesten.
	\newblock The limit distribution of {S}inai’s random walk in random
	environment.
	\newblock {\em Physica A}, 138:299--309, 1986.
	
	\bibitem{KKS75}
	H.~Kesten, M.V. Kozlov, and F.~Spitzer.
	\newblock A limit law for random walk in a random environment.
	\newblock {\em Compositio Mathematica}, 30(2):145--168, 1975.
	
	\bibitem{MerFloPel06}
	F.~Merlevède, M.~Peligrad, and S.~Utev.
	\newblock Recent advances in invariance principles for stationary sequences.
	\newblock {\em Probab. Surveys}, 3:1–36, 2006.
	
	\bibitem{Pit75}
	J.W. Pitman.
	\newblock One-dimensional {B}rownian motion and the three-dimensional {B}essel
	process.
	\newblock {\em Adv. Appl. Prob.}, 7(3):511–526, 1975.
	
	\bibitem{R68}
	P.~R{\'e}v{\'e}sz.
	\newblock Probability and {M}athematical {S}tatistics.
	\newblock In {\em The {L}aws of {L}arge {N}umbers}, page~2. Academic Press,
	1967.
	
	\bibitem{S82}
	Y.~Sinai.
	\newblock The limiting behavior of a one dimensional random walk in a random
	medium.
	\newblock {\em Theory Probab. Appl.}, 27(2):256--268, 1983.
	
	\bibitem{Sol75}
	F.~Solomon.
	\newblock Random walks in a random environment.
	\newblock {\em Ann. Probab.}, 3(1):1–31, 1975.
	
	\bibitem{SteSha10}
	E.M. Stein and R.~Shakarchi.
	\newblock {\em Complex {A}nalysis}.
	\newblock Princeton Lectures in Analysis. Princeton University Press, 2010.
	
	\bibitem{Wal00}
	P.~Walters.
	\newblock {\em An {I}ntroduction to {E}rgodic {T}heory}.
	\newblock Graduate Texts in Mathematics. Springer, New York, 2000.
	
	\bibitem{ZZ04}
	O.~Zeitouni.
	\newblock {\em Part {II}: {R}andom {W}alks in {R}andom {E}nvironment}, pages
	189--312.
	\newblock Springer-Verlag, Berlin, Heidelberg, 2004.
	
\end{thebibliography}

\end{document}